\definecolor{DarkBlue}{rgb}{0,0,0.8} 
\definecolor{DarkGreen}{rgb}{0,0.5,0.0}
\newtheorem*{lemma*}{Lemma}
\newtheorem*{corollary*}{Corollary}
\newtheorem*{theorem*}{Theorem}
\newtheorem*{theorem1*}{Theorem \ref{sp}}
\newtheorem*{theorem2*}{Theorem \ref{mm}}
\newtheorem*{conjecture*}{Conjecture \ref{conjcryDC}}
\numberwithin{equation}{section}
\newtheorem{theorem}[equation]{Theorem}
\newtheorem{proposition}[equation]{Proposition}
\newtheorem{lemma}[equation]{Lemma}
\newtheorem{corollary}[equation]{Corollary}
\newtheorem{conjecture}[equation]{Conjecture} 
\theoremstyle{remark}
\newtheorem{remark}[equation]{Remark}
\newtheorem{example}[equation]{Example}
\newtheorem{examples}[equation]{Examples}
\theoremstyle{definition}
\newtheorem{definition}[equation]{Definition}
\def\ee{{\bf e}}
  \def\e{\epsilon}
   \def\vol{{\rm vol}}
\def\vv{{\rm v}}
\def\i{{\rm in }}
\def\fin{{\rm fin}}
\def\v{{\bf a}}
\def\aa{{\bf a}}
 \def\f_H{{\bf w}}
 \def\f{{\bf f}}
 \def\a{{\bf a}}
\def\R{\mathbb{R}}
\def\N{\mathbb{N}}
\def\Z{\mathbb{Z}}
\def\g{{\bf a}}
\def\inc{\textrm{inc}}
 \def\F{\mathcal{F}}
\def\O{\mathcal{O}}
\def\I{\mathcal{I}}
\begin{document}

\tikzstyle{w}=[label=right:$\textcolor{red}{\cdots}$] 
\tikzstyle{b}=[label=right:$\cdot\,\textcolor{red}{\cdot}\,\cdot$] 
\tikzstyle{bb}=[circle,draw=black!90,fill=black!100,thick,inner sep=1pt,minimum width=3pt] 
\tikzstyle{bb2}=[circle,draw=black!90,fill=black!100,thick,inner sep=1pt,minimum width=2pt] 
\tikzstyle{b2}=[label=right:$\cdots$] 
\tikzstyle{w2}=[]
\tikzstyle{vw}=[label=above:$\textcolor{red}{\vdots}$] 
\tikzstyle{vb}=[label=above:$\vdots$] 

\tikzstyle{level 1}=[level distance=3.5cm, sibling distance=3.5cm]
\tikzstyle{level 2}=[level distance=3.5cm, sibling distance=2cm]

\tikzstyle{bag} = [text width=4em, text centered]
\tikzstyle{end} = [circle, minimum width=3pt,fill, inner sep=0pt]

\title[Flow polytopes of signed graphs and the Kostant partition function]{Flow polytopes of signed graphs and the Kostant partition function}
\author{Karola M\'esz\'aros}
\address{Karola M\'esz\'aros, Department of Mathematics, Cornell University, Ithaca NY 14853  \newline karola@math.cornell.edu
}

\author{Alejandro H.  Morales}
\address{Alejandro H. Morales,
Department of Mathematics, Massachusetts Institute of Technology, Cambridge, MA 02139 \newline ahmorales@math.mit.edu
}

\thanks{M\'esz\'aros is supported by a National Science Foundation Postdoctoral Research Fellowship.}
\date{\today}

\begin{abstract} We establish the relationship between volumes of flow polytopes associated to signed graphs and the Kostant partition function.  A special case of this relationship, namely, when the graphs are signless,  has been studied  in detail by Baldoni and Vergne using techniques of residues. In contrast with their approach, we provide  entirely combinatorial proofs inspired by the work of Postnikov and Stanley on flow polytopes. As a fascinating special family of flow polytopes, we  study  the Chan-Robbins-Yuen polytopes. Motivated by the beautiful volume formula $\prod_{k=1}^{n-2} Cat(k)$ for the type $A_n$ version,  where $Cat(k)$ is the $k$th Catalan number, we introduce  type $C_{n+1}$ and $D_{n+1}$ Chan-Robbins-Yuen polytopes along with intriguing conjectures pertaining to  their properties. \end{abstract}

\maketitle
  
\tableofcontents

  \section{Introduction}
\label{sec:flowsin}

In this paper we use combinatorial techniques to establish the relationship between volumes of flow polytopes associated to signed graphs and the Kostant partition function. Our techniques yield a systematic method for computing volumes of flow polytopes associated to signed graphs. We study special families of polytopes in detail, such as the Chan-Robbins-Yuen polytope \cite{CRY} and certain type $C_{n+1}$ and $D_{n+1}$ analogues of it. We also give several intriguing  conjectures for their volume.

Our results on  flow polytopes associated to signed graphs and the Kostant partition function   specialize to the results of Baldoni and Vergne, in which they established the connection between type $A_n$ flow polytopes and the Kostant partition function  \cite{BV2, BV1}. Baldoni and Vergne use residue techniques, while in their unpublished work Postnikov and Stanley took a combinatorial approach \cite{p,S}. In our study of type $A_n$ as well as type $C_{n+1}$ and  $D_{n+1}$ flow polytopes we establish the above mentioned connections by entirely combinatorial methods.

  Traditionally, flow polytopes are associated to loopless (and signless) graphs in the following way. Let $G$ be a graph on the vertex set $[n+1]$, and let $\i(e)$ denote the smallest (initial) vertex of edge $e$ and $\fin(e)$ the biggest (final) vertex of edge $e$.  Think of fluid flowing on the edges of $G$ from the smaller to the bigger vertices, so that the total fluid volume entering vertex $1$ is one and leaving vertex $n+1$ is one, and there is conservation of fluid at the intermediate vertices. Formally, a \textbf{flow} $f$ of size one on $G$ is a function $f: E \rightarrow \R_{\geq 0}$ from the edge set $E$ of $G$ to the set of nonnegative real numbers such that 
  
  $$1=\sum_{e \in E(G), \i(e)=1}f(e)= \sum_{e \in E(G),  \fin(e)=n+1}f(e),$$
  
\noindent  and for $2\leq i\leq n$
  
  $$\sum_{e \in E(G), \fin(e)=i}f(e)= \sum_{e \in E(G), \i(e)=i}f(e).$$
  
  \medskip

  The \textbf{flow polytope} $\F_G$ associated to the graph $G$ is the set of all flows $f: E \rightarrow \R_{\geq 0}$ of size one.
  A fascinating example  is the flow polytope $\F_{K_{n+1}}$ of the complete graph $K_{n+1}$, which is also called  the Chan-Robbins-Yuen polytope $CRYA_n$ \cite{CRY} (Chan, Robbins and Yuen defined it in terms of matrices),  and has kept the combinatorial community in its magic grip since its volume is equal to $\prod_{k=0}^{n-2}Cat(k),$ where  $Cat(k)=\frac{1}{k+1}{2k \choose k}$ is the $k$th Catalan number. This was proved analytically by Zeilberger \cite{Z}, but there is no combinatorial proof for this volume formula.

In their unpublished work \cite{p,S} Postnikov and Stanley  discovered the following  remarkable connection between the volume of the flow polytope and the Kostant partition function $K_G$:
 
 \begin{theorem1*}[\cite{p,S}] Given a loopless (signless) connected graph $G$ on the vertex set $[n+1]$, let $d_i=indeg_G(i)-1$, for $i \in \{2, \ldots, n\}$. Then, the normalized volume $\vol(\F_G)$ of the flow polytope $\F_G$ associated to the graph $G$ is 
 \begin{equation} \label{sp1}
 \vol(\F_G)=K_G(0,d_2, \ldots, d_n, -\sum_{i=2}^n d_i).
\end{equation}
  \end{theorem1*} 
  
  The notation $indeg_G(i)$ stands for the indegree of vertex $i$ in the graph $G$ and $K_G$ denotes the Kostant partition function associated to graph $G$. 
  
  In light of Theorem \ref{sp}, Zeilberger's result about the volume of the Chan-Robbins-Yuen polytope $CRYA_n$ can be stated as:

\begin{equation} \label{a}K_{K_{n-1}}(1, 2, \ldots, n-2, -\textstyle{{n-1 \choose 2}})=\prod_{k=1}^{n-2}Cat(k).\end{equation}

  Recall that the {\bf Kostant partition function}  $K_G$ evaluated at the vector $\v \in \Z^{n+1}$ is defined as

\begin{equation} \label{kost} K_G(\v)= \#  \{ (b_{k})_{k \in [N]} \mid \sum_{k \in [N]} b_{k}  \a_k =\v \textrm{ and } b_{k} \in \Z_{\geq 0} \},\end{equation}

\noindent where $[N]=\{1,2,\ldots,N\}$ and  $\{\{\a_1, \ldots, \a_N\}\}$ is the multiset of vectors corresponding to the multiset of edges of $G$ under the correspondence which associates an edge  $(i, j)$, $i<j$, of $G$ with a positive type $A_n$ root $\ee_i-\ee_j$, where $\ee_i$ is the $i$th standard basis vector in $\mathbb{R}^{n+1}$.

In other words, $K_G(\v)$ is the number of ways to write the vector $\v$ as a $\mathbb{N}$-linear combination of the positive type $A_n$ roots (with possible multiplicities) corresponding to the edges of $G$, without regard to order.  Note that for $K_G(\v)$ to be nonzero, the partial sums of the coordinates of $\v$ have to satisfy $a_1+\ldots+a_i \geq 0$, $i\in [n]$, and $a_1+\ldots+a_{n+1}=0$. Also, $K_G({\bf a})$ has the following formal generating series:
\begin{equation}\label{gsKA}
\sum_{\g \in \Z^{n+1}} K_G({\bf a}) x_1^{a_1}\cdots x_{n+1}^{a_{n+1}} = \prod_{(i,j)\in E(G)} (1-x_ix_j^{-1})^{-1}.
\end{equation} 
While endowed with combinatorial meaning, Kostant partition functions were introduced in and are a vital part of representation theory. For instance for classical Lie algebras, weight multiplicities and tensor product multiplicities (e.g., Littlewood-Richardson coefficients) can be expressed in terms of the Kostant partition function (see \cite{CC,GJH} and Steinberg's formula in \cite[Sec. 24.4]{JH}). Kostant partition functions also come up in toric geometry and approximation theory. A salient feature of $K_G({\bf a})$ is that it is a {\em piecewise quasipolynomial function} in ${\bf a}$ if $G$ is fixed \cite{DM,Str}.


We generalize Theorem \ref{sp} to establish the connection between flow polytopes associated to  loopless {\em signed} graphs and a {\bf dynamic} Kostant partition function $K_G^{\text{dyn}}({\bf a})$ with the following formal generating series: 
\begin{equation} \label{gsKC}
\sum_{\g \in \Z^{n+1}} K_{G}^{\text{dyn}}({\bf a}) x_1^{a_1}\cdots x_{n+1}^{a_{n+1}} =\prod_{(i,j,-) \in E(G)} (1-x_ix_j^{-1})^{-1} \prod_{(i,j,+) \in E(G)} (1-x_i-x_j)^{-1},
\end{equation}
where $G$ is a signed graph. By a signed graph we mean a graph where each edge has a positive or a negative sign associated to it. A signless graph can be thought of as a signed graph where all edges have a negative sign associated to them. The definition of a flow polytope associated to a signed graph generalizes the case of flow polytopes associated to signless graphs and can be found in Section \ref{sec:flowsdefs}.

We develop  a systematic method for calculating volumes of flow polytopes of signed graphs. There are several ways to state and specialize  our results; we highlight the next theorem as perhaps the most appealing special case. 

  \begin{theorem2*}  Given a loopless connected signed graph $G$ on the vertex set $[n+1]$, let $d_i=indeg_G(i)-1$ for $i\in \{2,\ldots,n\}$, where $indeg_G(i)$ is the indegree of vertex $i$. The normalized volume $\vol(\F_G)$ of the flow polytope $\F_G$ associated to graph $G$ is 
  
 $$\vol(\F_G)=K^{\text{dyn}}_{G}(0,d_2,\ldots,d_n,d_{n+1}),$$
  
\noindent  where $K^{\text{dyn}}_G$ has the generating series given in Equation \eqref{gsKC}. 
  \end{theorem2*} 




 Inspired by the intriguing $CRYA_n$ polytope, we introduce its type $C_{n+1}$ and $D_{n+1}$ analogues, $CRYC_{n+1}$ and $CRYD_{n+1}$, prove that their number of vertices are $3^{n}$ and $3^{n}-2^{n}$, respectively and we  conjecture the following.
 
 \begin{conjecture*} 
The normalized volumes of the type $C$ and type $D$ analogues $CRYC_{n+1}$ and $CRYD_{n+1}$  of the Chan-Robbins-Yuen polytope $CRYA_n$ are
\begin{align*}
\vol(CRYC_{n+1}) &= 2^{(n-1)^2+n} \prod_{k=0}^{n-1} Cat(k),\\
\vol(CRYD_{n+1}) &= 2^{(n-1)^2} \prod_{k=0}^{n-1} Cat(k),
\end{align*}
where $Cat(k)=\frac{1}{k+1}\binom{2k}{k}$ is the $k$th Catalan number.
\end{conjecture*}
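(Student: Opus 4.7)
My strategy is to apply Theorem \ref{mm} to the signed graphs whose flow polytopes are $CRYC_{n+1}$ and $CRYD_{n+1}$. As a first step I identify these graphs explicitly from the positive root data: $CRYC_{n+1}$ should correspond to the complete signed graph $G_C$ on $[n+1]$ with a negative edge $(i,j,-)$ for every $i<j$ together with a positive edge $(i,j,+)$ for every $i\leq j$ (the diagonal edges $(i,i,+)$ capturing the long roots $2\ee_i$ of type $C$), while $CRYD_{n+1}$ corresponds to the analogous graph $G_D$ obtained by deleting the diagonal positive edges. I then read off the indegree sequences $(d_2,\ldots,d_{n+1})$ at interior vertices and invoke Theorem \ref{mm} to rewrite the two statements of the conjecture as evaluations of the dynamic Kostant partition function,
\[
\vol(CRYC_{n+1}) = K^{\text{dyn}}_{G_C}(0,d_2,\ldots,d_{n+1}), \qquad \vol(CRYD_{n+1}) = K^{\text{dyn}}_{G_D}(0,d_2',\ldots,d_{n+1}').
\]

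By the generating series \eqref{gsKC}, each right-hand side equals the constant term of the product
\[
\prod_{i<j}(1-x_i x_j^{-1})^{-1}\,\prod_{(i,j,+)\in E(G)}(1-x_i-x_j)^{-1}
\]
against a monomial determined by the indegree data. The main obstacle is evaluating these constant terms. Unlike the classical Morris/Zeilberger framework used to handle $\vol(CRYA_n)$ via identity \eqref{a}, the positive-edge factors $(1-x_i-x_j)^{-1}$ couple the variables additively rather than multiplicatively, so Morris's constant-term identity does not apply directly.

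I would pursue two parallel strategies. First, a residue approach in the spirit of Baldoni--Vergne: iteratively extract residues at the poles $x_i=x_{i+1}$ and at the linear poles coming from $1-x_i-x_j$, aiming to obtain a recursion in $n$ whose scaling factor per reduction step is exactly what is needed to telescope into the proposed $2^{(n-1)^2+n}$ or $2^{(n-1)^2}$ times the Catalan-product prefactor; this presumably requires establishing a new multivariate constant-term identity generalizing Morris's to mixed difference-and-sum denominators. Second, a bijective approach: try to partition the dynamic flows counted by $K^{\text{dyn}}$ into orbits of size $2^{(n-1)^2+n}$ (respectively $2^{(n-1)^2}$), each indexed by a single type $A_n$ CRY flow, so that the factor $\prod_{k=0}^{n-1} Cat(k)$ is inherited from \eqref{a} while each orbit records an independent binary choice per positive edge (plus one per diagonal edge in type $C$).

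The hard part is executing either strategy. Even the base case $A_n$ has resisted a combinatorial proof, and here the mixed nature of the denominator makes both the residue calculus and the orbit structure substantially more intricate: the $(1-x_i-x_j)^{-1}$ factors generate poles along hyperplanes that are not permuted by the natural $S_n$ action and interact nontrivially with the $x_i-x_j$ poles, so a naive iterated-residue scheme fails to factor cleanly. I expect the residue route to be the more tractable starting point, with the bijective proof as a longer-term goal suggested by the transparent product form of the conjectured answer.
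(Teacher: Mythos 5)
There is a genuine gap: your proposal never actually proves the stated volume formulas. Note first that this statement is posed in the paper as a conjecture (Conjecture \ref{conjcryDC} and the surrounding discussion), not a theorem, so there is no proof in the paper to match; what the paper does do is exactly the reduction you describe for the type $D$ case, namely apply Theorem \ref{mm} together with the generating series of Proposition \ref{prop:Kdyngs} to rewrite $\vol(CRYD_n)$ as the mixed constant term in \eqref{eqCRYDMorrislike}, and then verify the product formula numerically for small $n$. Your proposal reaches the same point and then stops precisely where the difficulty begins: neither the ``residue recursion'' nor the ``orbit/bijective'' strategy is carried out, and the multivariate constant-term identity you would need, with factors $(x_j-x_i)^{-2c}(1-x_i-x_j)^{-2d}$, is exactly the generalization of the Morris identity that the authors state they were unable to establish. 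Saying that one would ``aim to obtain a recursion whose scaling factor telescopes'' or ``partition the dynamic flows into orbits of size $2^{(n-1)^2}$'' identifies plausible directions but supplies no argument; as written, the proposal is a restatement of the conjecture as an open constant-term identity, not a proof.

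There is also a technical flaw in the reduction step for the type $C$ case. Theorem \ref{mm} is stated for \emph{loopless} connected signed graphs, and its proof applies the Subdivision Lemma successively at vertices $2,\ldots,n+1$, which requires no loops at the vertex being eliminated. The graph $K_{n+1}^{C}$ carries loops $(i,i,+)$ at every vertex, so you cannot simply ``invoke Theorem \ref{mm}'' for $G_C$; you would first need to extend the theorem to graphs with loops (for instance by handling the loops via reductions of type (R6) before eliminating a vertex, or by relating $\F_{K_{n+1}^C}(2,0,\ldots,0)$ to the loopless polytope $CRYD_{n+1}$), and none of this is addressed. So even the identity $\vol(CRYC_{n+1})=K^{\text{dyn}}_{G_C}(0,d_2,\ldots,d_{n+1})$ that your plan starts from is not justified by the results you cite.
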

 

\medskip

\noindent {\bf Outline:} 
In the first part of this paper we introduce  flow polytopes associated to signed graphs and 
characterize their vertices.
 In Section \ref{sec:flowsdefs} the necessary background on  signed graphs, Kostant partition functions and flows is given. We also define  flow polytopes associated to signed graphs  and remark that their Ehrhart functions can be  expressed in terms of Kostant partition functions. In Section \ref{sec:flowsvert} we give a characterization of the vertices of flow polytopes associated to signed graphs, and prove that the vertices of a special family of flow polytopes associated to signed graphs are integral, noting that in general this is not the case. As an application of the results from this section we find nice formulas for the number of vertices of the type $C_n$ and $D_n$ generalizations of the Chan-Robbins-Yuen polytope. 

The second part of the paper is about subdivisions of flow polytopes. In Section \ref{sec:red} we show that certain operations on graphs, called reduction rules, are a way of encoding subdivisions of flow polytopes. Using the reduction rules, in Section \ref{sec:flowssubdiv} we state and prove the Subdivision Lemma, which is a key ingredient of our subsequent explorations. The Subdivision Lemma gives a hands-on way of subdividing, and eventually triangulating, flow polytopes. 

The last part of the paper is about using the subdivision of flow polytopes to compute their volumes. In Section \ref{sec:vol} we use the Subdivision Lemma to prove Theorems \ref{sp} and \ref{mm}: namely that the volume of a flow polytope is equal to a value of the dynamic Kostant partition function. To do the above, we introduce the dynamic Kostant partition function in this section. The dynamic Kostant partition function specializes to the Kostant partition function in the case of signless graphs and has a nice and simple generating function, just like the Kostant partition function. We apply the above results in Section \ref{CRYAD} to the study of volumes of the Chan-Robbins-Yuen polytope and its various generalizations. We conclude our chapter  with several intriguing conjectures on the  volumes of the type $C_n$ and $D_n$ generalizations of the Chan-Robbins-Yuen polytope.

Supplementary code for calculating the volume of flow polytopes and for evaluating the (dynamic) Kostant partition function is available at the site: 
\begin{center}
\url{http://sites.google.com/site/flowpolytopes/}
\end{center}

\subsection*{Acknowledgements} 

We thank Alexander Postnikov and  Richard Stanley for encouraging us to work on this problem, and for discussions. We also thank Federico Ardila for his enthusiasm for our work and several suggestions and Olivier Bernardi for helpful comments.

 \section{Signed graphs, Kostant partition functions, and flows}
\label{sec:flowsdefs}

In this section we define the concepts of graphs, Kostant partition functions and flows, all in the \textbf{signed} universe. One can think of these as the generalization of these concepts' signless counterparts from the type $A_n$ (signless) root system to other types, such as $C_{n+1}$ and $D_{n+1}$. We also define general flow polytopes, which are a main object of this paper.  We conclude the section by giving simple properties of these polytopes and giving examples of the main flow polytopes we study.

Throughout this section,  the graphs $G$ on the vertex set $[n+1]$ that we consider are signed, that is there is a sign $\e \in \{+, -\}$ assigned to each of its edges. We allow loops and multiple edges. The sign of a loop is always $+$, and a loop at vertex $i$ is denoted by $(i, i, +)$. Denote by $(i, j, -)$ and $(i, j, +)$, $i < j$, a negative and a positive edge between vertices $i$ and $j$, respectively.  A positive edge, that is an edge labeled by $+$, is {\bf positively incident}, or, {\bf incident with a positive sign}, to both of its endpoints.  A negative  edge is positively incident to its smaller vertex and  {\bf negatively  incident} to  its greater endpoint. See Figure~\ref{turns} for an example of the incidences. 
Denote by $m_{ij}^\e$ the multiplicity of edge $(i, j, \e)$ in $G$, $i\leq j$, $\e \in \{+, -\}$. 
To each edge $(i, j, \e)$, $i\leq j$,  of $G$,  associate the positive type $C_{n+1}$ root $\vv(i,j, \e)$, where $\vv(i,j, -)=\ee_i-\ee_j$ and $\vv(i,j, +)=\ee_i+\ee_j$. Let $\{\{\a_1, \ldots, \a_N\}\}$ be the multiset of vectors corresponding to the multiset of edges of $G$ (i.e., $\a_k=v(e_k)$). Note that $N=\sum_{1\leq i\leq j\leq n+1} (m_{ij}^-+m_{ij}^+)$.

 The {\bf Kostant partition function}  $K_G$ evaluated at the vector $\v \in \Z^{n+1}$ is defined as

$$K_G(\v)= \# \{ (b_{k})_{k \in [N]} \mid \sum_{k \in [N]} b_{k}  \a_k =\v \textrm{ and } b_{k} \in \Z_{\geq 0}\}.$$

That is, $K_G(\v)$ is the number of ways to write the vector $\v$ as an $\mathbb{N}$-linear combination of the positive type $C_{n+1}$ roots corresponding to the edges of $G$, without regard to order.

 \begin{example}
For the signed graph $G$ in Figure \ref{AAA}, $K_G(1,3,-2)=3,$ since $(1,3,-2)=(\ee_1-\ee_3) + (2\ee_2) + (\ee_2-\ee_3)=(\ee_1+\ee_2) + 2(\ee_2-\ee_3)=(\ee_1-\ee_2)+(2\ee_2)+2(\ee_2-\ee_3)$.
\end{example}

\begin{figure}
\begin{center}
\subfigure[]{
\includegraphics[width=10cm]{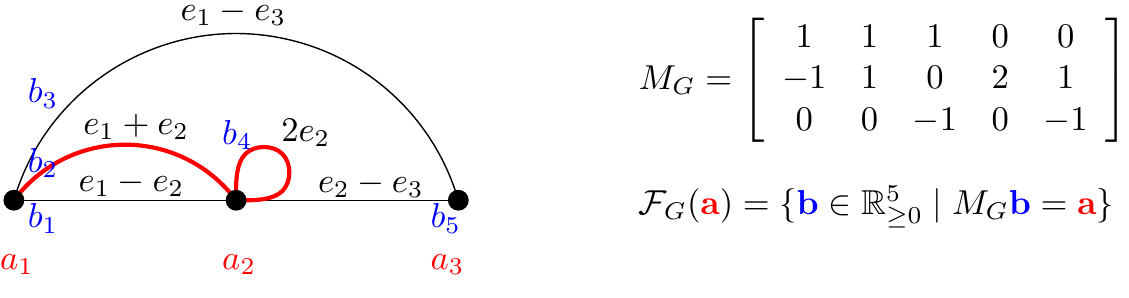}
\label{AAA}
}
\quad
\subfigure[]{
\includegraphics[width=4cm]{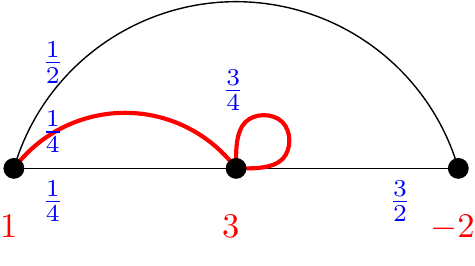}
\label{exflow}
}
\caption{ (a)
A signed graph $G$ on three vertices and the positive roots associated with each of the five edges. The columns of the matrix $M_G$ correspond to these roots.  The flow polytope $\F_G(\g)$ consists of the flows ${\bf b}\in \R_{\geq 0}^4$ such that $M_G{\bf b} = {\bf a}$ where ${\bf a}$ is the netflow vector. The Kostant partition function $K_{G}({\bf a})$ counts the lattice points of $\F_G(\g)$, the number of ways of obtaining ${\bf a}$ as a $\mathbb{N}$-integer combination of the roots associated to $G$.\newline
(b) A nonnegative flow on $G$ with  {netflow vector} ${\bf a}=(1, 3,-2)$. The flows on the edges are in \textcolor{blue}{blue}. Note that the total flow on the positive edges is $\textcolor{blue}{\frac14} + \textcolor{blue}{\frac34}=1=\frac12(\textcolor{red}{1}+\textcolor{red}{3}\textcolor{red}{-2})$.}
\label{figexthm1}
\end{center}
\end{figure}


Just like in the type $A_n$ case, we would like to think of the vector $(b_{i})_{i \in [N]} $ as a {\bf flow}. For this we here give a precise definition of flows in the type $C_{n+1}$ case, of which type $A_{n}$ is of course a special case.

Let $G$ be a signed graph on the vertex set $[n+1]$. Let $\{\{e_1, \ldots, e_N\}\}$ be the multiset of edges of $G$, and $S_G:=\{\{\a_1, \ldots, \a_N\}\}$  the multiset of  positive type $C_{n+1}$ roots   corresponding to the multiset of edges of $G$. Also, let $M_G$ be the $(n+1)\times N$ matrix whose columns are the vectors in $S_G$.  Fix an integer  vector $\g=(a_1, \ldots, a_n, a_{n+1}) \in \Z^{n+1}$.  

 An \textbf{$\g$-flow} $\f_G$ on $G$ is a vector $\f_G=(b_k)_{k \in [N]}$, $b_k \in \R_{\geq 0}$  such that $M_G \f_G=\g$. That is, for all $1\leq i \leq n+1$, we have 

 \begin{equation} \label{eqn:flow}
 \sum_{e \in E(G), \inc(e, v )=-} b(e)+a_v= \sum_{e \in E(G), \inc(e, v)=+} b(e)+\sum_{e=(v, v, + )} b(e),
  \end{equation}
  
\noindent   where $b(e_k)=b_k$, $\inc(e, v)=-$ if $e=(g, v, -)$, $g <v$, and $\inc(e, v)=+$ if $e=(g, v, +)$, $g <v$, or $e=( v, j, \e)$, $v <j,$ and $\e \in \{+, -\}$. 

\begin{example}
Figure \ref{exflow} shows a signed graph $G$ with three vertices with flow assigned to each edge. The netflow is ${\bf a}=(1,3,-2)$
\end{example}

Call $b(e)$  the \textbf{flow} assigned to edge $e$ of $G$. If the edge $e$ is negative, one can think of $b(e)$ units of fluid flowing on $e$ from its smaller to its bigger vertex. If the edge $e$ is positive, then one can think of $b(e)$ units of fluid flowing away both from $e$'s smaller and bigger vertex to ``infinity.'' Edge $e$ is then a ``leak" taking away $2b(e)$ units of fluid.

From the above explanation it is clear that if we are given an $\aa$-flow $\f_G$ such that 
\begin{equation} \label{eq:flowconstraint}
\sum_{i=1}^{n+1}a_i=2y,
\end{equation}
for some positive integer $y$ then  $\sum_{e=(i, j, +)}b(e)=y$.
 
 An {\bf  integer} \textbf{$\g$-flow} $\f_G$ on $G$ is an $\g$-flow  $\f_G=(b_i)_{i \in [N]}$, with $b_i \in \Z_{\geq 0}$. It is a   matter of checking the definitions to see that for a signed graph $G$ on the vertex set $[n+1]$ and vector  $\g=(a_1, \ldots, a_n, a_{n+1}) \in \Z^{n+1}$,  the number of  integer $\g$-flows on $G$ is given by the Kostant partition function, as highlighted in the next remark. 

\begin{remark}\label{flowsKG} Given a signed graph $G$ on the vertex set $[n+1]$ and a vector $\g=(a_1,\ldots,a_n,a_{n+1}) \in \Z^{n+1}$,  the integer $\g$-flows are in bijection with ways of writing $\g$ as a nonnegative linear combination of the roots associated to the edges of $G$. Thus $\#\{\text{integer } \g\text{-flows}\}=K_G(\g)$.
\end{remark}

Define the \textbf{flow polytope} $\F_G(\g)$ associated to a signed graph $G$ on the vertex set $[n+1]$ and the integer vector $\aa=(a_1, \ldots, a_{n+1})$ as the set of all $\g$-flows $\f_G$ on $G$, i.e., $\F_G=\{\f_G \in \R^N_{\geq 0} \mid M_G \f_G = \g\}$. The flow polytope 
  $\F_G(\g)$ then naturally lives in $\mathbb{R}^{N}$, where $N$ is the number  of edges of $G$.

Recall that $S_G$ denotes the multiset of $N$ vectors corresponding to the edges of $G$ and assume they span an $r$-dimensional space. Let $C(S_G)$ be the cone generated by the vectors in $S_G$. A vector $\g$ is in the interior of $C(S_G)$ if and only if $\g$ can be expressed as $\g=\sum_{i=1}^N b_i\a_i$ where $b_i>0$ for all $i$ \cite[Lemma 1.46.]{DCP}. If $\g$ is in the interior of $C(S_G)$, the dimension of $\F_G(\g)$ can be easily determined \cite[Sec. 1.1.]{BV2}.

\begin{proposition}[\cite{BV2}] \label{prop:dim}
The flow polytope $\F_G(\g)$ is empty if $\g\not\in C(S_G)$ and if $\g$ is in the interior of $C(S_G)$ then $\dim(\F_G(\g))=N-r$. This is also the dimension of the kernel of $M_G$.
\end{proposition}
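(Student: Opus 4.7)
The plan is to prove the two parts of the proposition in sequence, both following fairly directly from the definitions of $\F_G(\g)$, $C(S_G)$, and the cited characterization of the interior of a conical hull.

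For the emptiness claim, I would unpack the definitions: a point $\f_G = (b_1,\ldots,b_N) \in \F_G(\g)$ satisfies $M_G \f_G = \g$ with $b_i \geq 0$, which is precisely the assertion $\g = \sum_{i=1}^N b_i \a_i$ with nonnegative coefficients. This exhibits $\g$ as an element of $C(S_G)$. Contrapositively, if $\g \notin C(S_G)$, no such flow exists and $\F_G(\g) = \emptyset$.

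For the dimension claim, the upper bound is immediate: $\F_G(\g) \subseteq \{\f \in \R^N : M_G \f = \g\}$, and because $\g$ lies in the column span of $M_G$ (being in $C(S_G)$), this affine subspace is nonempty and has dimension $N - r$, where $r = \mathrm{rank}(M_G)$ equals the dimension of the span of $S_G$. Thus the affine hull of $\F_G(\g)$ has dimension at most $N - r$, which equals $\dim \ker(M_G)$.

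For the matching lower bound, I would invoke the hypothesis that $\g$ lies in the \emph{interior} of $C(S_G)$ together with the cited Lemma 1.46 of \cite{DCP} to produce a flow $\f^\ast = (b_1^\ast,\ldots,b_N^\ast)$ with every $b_i^\ast > 0$ satisfying $M_G \f^\ast = \g$. Then, for any $\f_0 \in \ker(M_G)$ and any sufficiently small $\epsilon > 0$, the perturbed vector $\f^\ast + \epsilon \f_0$ still has strictly positive coordinates and satisfies $M_G(\f^\ast + \epsilon \f_0) = \g$, hence lies in $\F_G(\g)$. This shows that the translate $\F_G(\g) - \f^\ast$ contains an open neighborhood of the origin inside $\ker(M_G)$, so $\dim \F_G(\g) \geq \dim \ker(M_G) = N - r$. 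Combining the two inequalities yields the desired equality.

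The only mildly delicate point is justifying the existence of a \emph{strictly positive} flow; everything else is linear algebra. This is handled cleanly by the characterization that $\g \in \mathrm{int}\,C(S_G)$ is equivalent to $\g$ admitting an expansion $\g = \sum b_i \a_i$ with every $b_i > 0$, which is exactly what the cited lemma of \cite{DCP} supplies.
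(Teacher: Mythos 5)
Your proof is correct. Note that the paper does not actually prove Proposition~\ref{prop:dim}: it is quoted from Baldoni--Vergne \cite[Sec.~1.1]{BV2}, with the only recorded ingredient being the characterization of points in the interior of $C(S_G)$ as those admitting an expression $\g=\sum_i b_i\a_i$ with all $b_i>0$ (the paper's citation of \cite[Lemma 1.46]{DCP}). Your argument is the standard one this citation points at, and it is complete: emptiness for $\g\notin C(S_G)$ is definitional; the upper bound $\dim\F_G(\g)\le N-r=\dim\ker M_G$ follows from containment in the affine solution set of $M_G\f=\g$, whose dimension is $N-\mathrm{rank}(M_G)$; and the strictly positive flow $\f^\ast$ furnished by the interior hypothesis lets you perturb by small elements of $\ker M_G$ while preserving nonnegativity, giving the matching lower bound. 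The one caveat worth flagging is that ``interior'' here must be read as the relative interior of $C(S_G)$ inside the span of $S_G$: when $G$ has only negative edges, $S_G$ spans the hyperplane $x_1+\cdots+x_{n+1}=0$ (see Remark~\ref{rem:dim}), so the topological interior of $C(S_G)$ in $\R^{n+1}$ is empty. This does not affect your argument, since the all-coefficients-positive characterization you invoke is precisely the relative-interior statement, and your perturbation step uses only strict positivity of the coordinates of $\f^\ast$, not full-dimensionality of the cone.
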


\begin{remark} \label{rem:dim}
For a signed connected graph $G$ with vertex set $[n+1]$ and $N$ edges, if $\g$ is in the interior of $C(S_G)$, then $\dim(\F_G(\g))=\#E(G)-\#V(G)+1=N-n$ if $G$ only has negative edges (since $S_G$ spans the hyperplane $x_1+x_2+\cdots + x_{n+1}=0$), and $\dim(\F_G(\g))=\#E(G)-\#V(G)=N-n-1$ otherwise.
\end{remark}

Recall that given a  polytope $\mathcal{P}\subset \mathbb{R}^{N}$, the {$t^{th}$ dilate} of $\mathcal{P}$ is $\displaystyle t \mathcal{P}=\{(tx_1, \ldots, tx_{N}) \mid  (x_1, \ldots, x_{N}) \in \mathcal{P}\}.$ The number of lattice points of $t\mathcal{P}$, where $t$ is a nonnegative integer and $\mathcal{P}$ is a convex polytope, is given by the {\em Ehrhart function} $L_{\mathcal{P}}(t)$. If $\mathcal{P}$ has (rational) integral vertices then $L_{\mathcal{P}}(t)$ is a (quasi) polynomial (for background on the  theory of Ehrhart polynomials see \cite{BR}). From the definition of the Ehrhart function and the Kostant partition function it follows that
\begin{equation} \label{ehr} \displaystyle L_{\F_G(\g)} (t) = K_G(t \g). \end{equation}

Recall also that given two polytopes $\mathcal{P}_1$ and $\mathcal{P}_2$, their {\em Minkowski sum} is $\mathcal{P}_1+\mathcal{P}_2 = \{ v_1 + v_1 \mid v_1 \in \mathcal{P}_1, v_2 \in \mathcal{P}_2\}$. For a flow polytope $\F_G(\g)$ where $G$ is a signed graph on the vertex set $[n+1]$ and $\g\in \Z^{n+1}$, we have that  $\F_G(\g)$ is the Minkowski sum: 
\begin{equation}\label{eq:Minkowski}
\F_G(\g) = y\F_G(2\ee_1) +  \sum_{i=1}^{n}(a_i-2y\delta_{1,i})\F_{G}(\ee_i-\ee_{n+1}),
\end{equation}
 since $\g = y(2\ee_1) + \sum_{i=1}^{n}(a_i-2y\delta_{1,i})(\ee_i-\ee_{n+1})$,
where $\delta_{1,i}$ is the Kronecker delta and $2y=\sum_{i=1}^{n+1}a_i$. 

By~\eqref{eq:flowconstraint}, the flow polytopes $\F_G(\ee_i-\ee_{n+1})$ consists of flows with zero flow on the positive edges of $G$. Thus we can regard $\F_G(\ee_i-\ee_{n+1})$ as a type $A_n$ flow polytope on the signless graph obtained from $G$ by disregarding its positive edges (we can also ignore the negative edges $(a,b,-)$ $a<b<i$ since they also have zero flow). Such type $A_n$ flow polytopes have been widely studied \cite{BLV,BV2,DCP} and we  discuss their volumes in Section~\ref{vols}. The remaining polytope in the Minkowski sum~\eqref{eq:Minkowski}, $\F_G(2\ee_1)$, in general will consist of flows with one unit of flow on the positive edges. The volume of such type $D_{n+1}$ polytopes will be studied in Section~\ref{volsD}.

Finally, we give the main examples of the flow polytopes we study (see Figure~\ref{figmainexs}):
\begin{examples} \hfill \label{mainexs}
\begin{compactitem}
\item[(i)] Let $G$ be the graph with vertices $\{1,2\}$ and edges $(1,2,-)$ with multiplicity $m_{12}$; and let $\a=(1,-1)$. Then $\F_{G}(1,-1)$ is an $(m_{12}-1)$-dimensional simplex.
\item[(ii)] Let $G$ be the signed graph with one vertex $\{1\}$ and loops $(1,1,+)$ with multiplicity $m_{11}$; and let $\a=2$. Then $\F_G(2)$ is an $(m_{11}-1)$-dimensional simplex.
\item[(iii)] Let $G=K_{n+1}$ be the complete graph with $n+1$ vertices (all edges $(i,j,-)$ $1\leq i<j\leq n+1$) and $\a=\ee_1-\ee_{n+1}$. Then $\F_{K_{n+1}}(\ee_1-\ee_{n+1})$ is the type $A_n$ Chan-Robbins-Yuen polytope or $CRYA_n$ \cite{CR,CRY}. Such polytope is a face of the Birkhoff polytope of all $n\times n$ doubly stochastic matrices. It has dimension $\binom{n}{2}$, $2^{n-1}$ vertices, and Zeilberger \cite{Z} showed that its normalized volume is $\vol(CRYA_n)=\prod_{k=0}^{n-2}Cat(k)$ where $Cat(k)=\frac{1}{k+1}\binom{2k}{k}$ is the $k$th Catalan number. 
\item[(iv)] Let $G=K^D_n$ be the complete {\em signed} graph with $n$ vertices (all edges $(i,j,\pm)$ $1\leq i<j\leq n$) and $\a=2\ee_1$. Then $CRYD_n=\F_{K^D_n}(2\ee_1)$ is a type $D_n$ analogue of $CRYA_n$. We show it is integral (see Theorem~\ref{vertices2}) with dimension $n(n-2)$ and $3^{n-1}-2^{n-1}$ vertices (see Proposition~\ref{cryd}). We conjecture (see Conjecture~\ref{conjcryDC}) that its normalized volume is $2^{(n-2)^2}\cdot \vol(CRYA_n)$.
\item[(v)] Let $G=K^C_n$ be the complete {\em signed} graph with $n$ vertices and with loops $(i,i,+)$ corresponding to the type $C$ positive roots $2\ee_i$. Then $CRYC_n=\F_{K^C_n}(2\ee_1)$ is a type $C_n$ analogue of $CRYA_n$.  We show it is integral (see Theorem~\ref{vertices2}) with dimension $n(n-2)$ and $3^{n-1}$ vertices (see Proposition~\ref{cryc}). We conjecture (see Conjecture~\ref{conjcryDC}) that its normalized volume is $2^{n-1}\cdot \vol(CRYD_n)$.
\end{compactitem}
\end{examples}

\begin{figure}
\centering
\includegraphics[width=14cm]{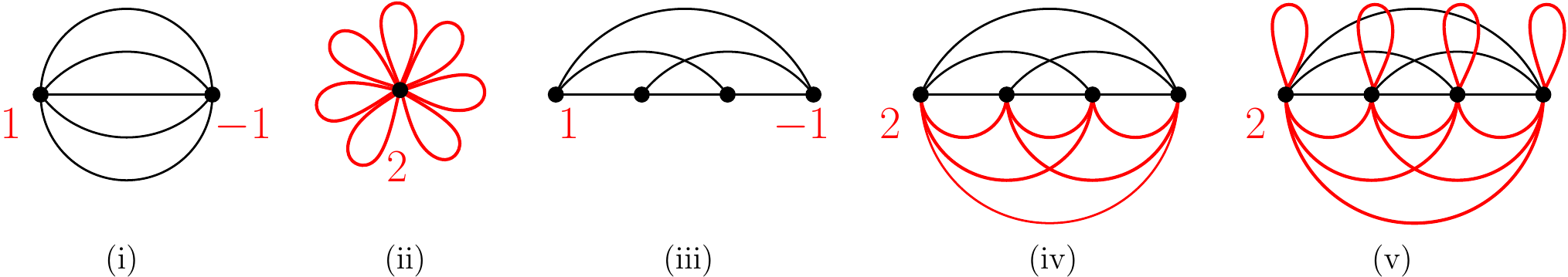}
\caption{Graphs and \textcolor{red}{netflow} whose flow polytopes are: (i), (ii) simplices and (iii),(iv),(v) instances of $CRYA_n$, $CRYD_n$ and $CRYC_n$.} 
\label{figmainexs}
\end{figure}






\section{The vertices of the flow polytope $\F_G(\g)$} \label{sec:flowsvert}

\subsection{Vertices of $\F_G(\g)$}
In this section we characterize the vertices of the flow polytope $\F_G(\g)$. Remarkably, if $G$ is a graph with only negative edges,  then   for any integer vector $\g$ the vertices of $\F_G(\g)$ are integer. Such a statement  is not true for signed graphs $G$ in general.  However, we show, using our characterization of the vertices of $\F_G(\g)$  that for special integer vectors $\g$ the vertices of $\F_G(\g)$  are integer. As an application of our vertex characterization, we show that the numbers of vertices of the type $C_{n+1}$ and type $D_{n+1}$ analogues of the Chan-Robbins-Yuen polytope from Examples~\ref{mainexs} (iv),(v) are $3^n$ and $3^n-2^n$, respectively.

That the vertices of $\F_G(\g)$ are integer for any signless graph $G$ and any integer vector $\g$ follows from the fact that the matrix $M_G$, whose columns are the positive type $A$ roots associated to the edges of $G$, is totally unimodular. However, as mentioned above, for signed graphs $G$ the polytope $\F_G(\g)$ does not always have integer vertices as the following simple example shows.

\begin{example}
Let $G$ be the graph 
\includegraphics[height=0.45cm]{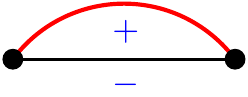} the flow polytope $\F_G(1,0)$ is a zero dimensional polytope with a vertex $(1/2,1/2)$. 
\end{example}

In the rest of the section $G$ denotes a signed graph. Recall that we defined $\g$-flows to be nonnegative. In this section we use the term \textbf{nonzero signed ${\bf 0}$-flow} to refer to a flow where we allow flows to be negative or positive or zero (as signified by signed),  which is not zero everywhere (signified by nonzero) and where the netflow is ${\bf 0}$.

\begin{lemma} \label{def}
An $\g$-flow $\f_G$ on $G$ is a vertex of  $\F_G(\g)$ if and only if there is no nonzero signed ${\bf 0}$-flow $\f^{\e}_G$ such that  $\f^{}_G-\f^{\e}_G$ and $\f^{}_G+\f^{\e}_G$ are flows on $G$.
\end{lemma}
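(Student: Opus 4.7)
The lemma is the standard characterization of polytope vertices translated into flow language, so the plan is simply to unravel the definitions and check both implications.

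First, I would recall the general geometric criterion: a point $\f_G$ of a convex polytope $P$ is a vertex if and only if it cannot be written as the midpoint $\tfrac12(x+y)$ of two distinct points $x,y \in P$; equivalently, there is no nonzero $d \in \R^N$ such that both $\f_G + d$ and $\f_G - d$ lie in $P$. Applying this to $P = \F_G(\g) = \{\f \in \R^N_{\geq 0} \mid M_G \f = \g\}$ reduces the problem to identifying which vectors $d$ can play this role.

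Next I would perform the translation. For the ``only if'' direction, suppose $\f_G$ is not a vertex and pick a nonzero $d$ with $\f_G \pm d \in \F_G(\g)$. Subtracting $M_G \f_G = \g$ from $M_G(\f_G \pm d) = \g$ gives $M_G d = 0$, so $d$ is a signed ${\bf 0}$-flow in the sense of the lemma, and it is nonzero by assumption. The conditions $\f_G \pm d \in \R^N_{\geq 0}$ together with $M_G(\f_G\pm d) = \g$ say exactly that $\f_G + d$ and $\f_G - d$ are $\g$-flows on $G$, so $d$ serves as the desired $\f^{\e}_G$. For the ``if'' direction, conversely, given a nonzero signed ${\bf 0}$-flow $\f^{\e}_G$ with $\f_G \pm \f^{\e}_G$ both flows on $G$, the identity
\[
\f_G \;=\; \tfrac12\bigl[(\f_G + \f^{\e}_G) + (\f_G - \f^{\e}_G)\bigr]
\]
exhibits $\f_G$ as the midpoint of two distinct points of $\F_G(\g)$, whence $\f_G$ is not a vertex.

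There is no real obstacle here; the only thing to be careful about is keeping the sign conventions straight. Allowing the perturbation $d$ to have negative entries is precisely what the word ``signed'' encodes in $\f^{\e}_G$, and the nonnegativity bounds $-\f_G \leq \f^{\e}_G \leq \f_G$ are packaged into the requirement that $\f_G \pm \f^{\e}_G$ be honest (nonnegative) $\g$-flows on $G$. Combining the two implications yields the lemma.
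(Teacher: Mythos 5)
Your proof is correct and follows essentially the same route as the paper: both arguments reduce the statement to the standard midpoint characterization of vertices and then observe that the perturbation $d$ must satisfy $M_G d = 0$, i.e.\ be a nonzero signed ${\bf 0}$-flow. Your write-up is in fact slightly more explicit than the paper's, which leaves the verification that the perturbation is a ${\bf 0}$-flow implicit.
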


Lemma \ref{def} follows from definitions, but since it is the starting point of the characterization of the vertices of $\F_G(\g)$,  we include a proof for clarity.

\medskip

\noindent \textit{Proof of Lemma \ref{def}.}
If there is a nonzero signed ${\bf 0}$-flow $\f^{\e}_G$ such that  $\f^{}_G-\f^{\e}_G$ and $\f^{}_G+\f^{\e}_G$ are flows  on $G$ (and thus $\g$-flows on $G$), then  $$\f_G=((\f^{}_G-\f^{\e}_G)+(\f^{}_G+\f^{\e}_G))/2,$$
so $\f_G$ is not a vertex of  $\F_G(\g)$.

If $\f_G$ is not a vertex of  $\F_G(\g)$, then  $\f_G$ can be written as 
  $$\f_G=(\f^{1}_G+\f^{2}_G)/2,$$ for some $\g$-flows $\f^{1}_G$ and $\f^{2}_G$ on $G$. Thus,  $$\f^{1}_G=\f_G-\f^{\e}_G$$ and $$\f^{2}_G=\f_G+\f^{\e}_G,$$ for some nonzero signed ${\bf 0}$-flow $\f^{\e}_G$.
\qed

\begin{lemma} \label{def2}
There is a nonzero signed ${\bf 0}$-flow $\f^{\e}_G$ such that  $\f^{}_G-\f^{\e}_G$ and $\f^{}_G+\f^{\e}_G$ are flows on $G$ if and only if there is a nonzero signed ${\bf 0}$-flow $\f^{\e}_G$ on $G$ whose support is contained in the support of $\f_G$. 
\end{lemma}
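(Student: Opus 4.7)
The plan is to argue both directions by a direct edge-wise comparison, using the linearity of the flow equations. The forward implication is essentially immediate, while the reverse implication requires rescaling by a small positive constant to ensure nonnegativity of $\f_G \pm \f^{\e}_G$ while preserving the $\mathbf{0}$-flow property.

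For the forward direction, suppose $\f^{\e}_G$ is a nonzero signed $\mathbf{0}$-flow such that both $\f_G - \f^{\e}_G$ and $\f_G + \f^{\e}_G$ lie in $\F_G(\g)$. Evaluating these two flows edge by edge forces $\f_G(e) - \f^{\e}_G(e) \geq 0$ and $\f_G(e) + \f^{\e}_G(e) \geq 0$, equivalently $|\f^{\e}_G(e)| \leq \f_G(e)$ for every edge $e$. In particular, whenever $\f^{\e}_G(e) \neq 0$ we must have $\f_G(e) > 0$, so the support of $\f^{\e}_G$ is contained in the support of $\f_G$.

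For the reverse direction, suppose we are given a nonzero signed $\mathbf{0}$-flow $\f^{\e}_G$ whose support is contained in the support of $\f_G$. Define
\[
\lambda \;=\; \min_{e \,\in\, \text{supp}(\f^{\e}_G)} \frac{\f_G(e)}{|\f^{\e}_G(e)|}.
\]
Since $\f_G(e) > 0$ for each $e$ in the (finite, nonempty) support of $\f^{\e}_G$ and each $|\f^{\e}_G(e)| > 0$, the minimum $\lambda$ is a strictly positive real number. Consider $\lambda \f^{\e}_G$: this is still a nonzero signed flow, and because the defining equations \eqref{eqn:flow} with netflow zero are linear and homogeneous, $\lambda \f^{\e}_G$ is again a signed $\mathbf{0}$-flow on $G$. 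By construction $|\lambda \f^{\e}_G(e)| \leq \f_G(e)$ on the support of $\f^{\e}_G$, and on edges outside this support $\lambda \f^{\e}_G(e) = 0$, so the same inequality holds trivially. Hence $\f_G \pm \lambda \f^{\e}_G$ are nonnegative on every edge, and since adding a $\mathbf{0}$-flow to an $\g$-flow yields another $\g$-flow, both $\f_G - \lambda\f^{\e}_G$ and $\f_G + \lambda\f^{\e}_G$ lie in $\F_G(\g)$. Taking $\lambda \f^{\e}_G$ as the witness completes the direction.

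There is no real obstacle here — the only subtle point is making sure that rescaling by $\lambda$ preserves the ``signed $\mathbf{0}$-flow'' property, which is an observation about the linearity of the conservation/leak equations in \eqref{eqn:flow} when $\g = \mathbf{0}$, and that the minimum defining $\lambda$ is strictly positive because we are only minimizing over the (finitely many) edges where $\f^{\e}_G$ is nonzero and $\f_G$ is therefore also nonzero by hypothesis.
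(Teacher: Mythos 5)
Your proof is correct and follows essentially the same route as the paper: the forward direction is the immediate edge-wise nonnegativity observation, and the reverse direction rescales the given nonzero signed ${\bf 0}$-flow by a small positive factor (your explicit $\lambda$ plays the role of the paper's division by a large constant $M$) so that $\f_G \pm \lambda\f^{\e}_G$ remain nonnegative, with linearity of the flow equations preserving the ${\bf 0}$-flow and $\g$-flow properties. Your version is just slightly more explicit about the choice of scaling constant.
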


\proof One implication is trivial, and the other one follows by observing that given a nonzero signed ${\bf 0}$-flow $\f^{\e}_G$ on $G$ whose support is contained in the support of $\f_G$, we can obtain another nonzero signed ${\bf 0}$-flow $\f^{\e'}_G$ on $G$  whose support is contained in the support of $\f_G$ such that the  absolute value of the values $\f^{\e'}_G(e),$ for edges $e \in G$, is arbitrarily small, by simply letting $\f^{\e'}_G=\f^{\e}_G/M,$ for some large value of $M$. Thus if there is a nonzero signed ${\bf 0}$-flow $\f^{\e}_G$ on $G$ whose support is contained in the support of $\f_G$, then we can construct a nonzero signed ${\bf 0}$-flow $\f^{\e'}_G$ such that  $\f^{}_G-\f^{\e'}_G$ and $\f^{}_G+\f^{\e'}_G$ are flows on $G$. \qed

\begin{corollary} \label{cor:def}
An $\g$-flow $\f_G$ on $G$ is a vertex of  $\F_G(\g)$ if and only if there is no nonzero signed ${\bf 0}$-flow $\f^{\e}_G$ on $G$ whose support is contained in the support of $\f_G$.
\end{corollary}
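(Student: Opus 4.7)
My plan is to deduce the corollary by directly combining the two preceding lemmas. Lemma \ref{def} already translates the condition ``$\f_G$ is a vertex of $\F_G(\g)$'' into the nonexistence of a nonzero signed ${\bf 0}$-flow $\f^{\e}_G$ with the property that $\f_G - \f^{\e}_G$ and $\f_G + \f^{\e}_G$ are both (nonnegative) $\g$-flows on $G$. Lemma \ref{def2} then gives an equivalence between the existence of such a ``bi-perturbation'' flow and the existence of any nonzero signed ${\bf 0}$-flow whose support is contained in $\mathrm{supp}(\f_G)$.

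The key step is therefore to negate the equivalence in Lemma \ref{def2} and substitute it into the characterization from Lemma \ref{def}. Explicitly: ``there is no nonzero signed ${\bf 0}$-flow $\f^{\e}_G$ with $\f_G \pm \f^{\e}_G \in \F_G(\g)$'' is equivalent to ``there is no nonzero signed ${\bf 0}$-flow on $G$ supported inside $\mathrm{supp}(\f_G)$.'' Chaining the two biconditionals yields the statement of the corollary.

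Since both ingredients are already in hand, there is no real obstacle; the only care needed is to note that Lemma \ref{def2} is stated as an existence equivalence, so the contrapositive is what feeds into Lemma \ref{def}. In writeup form I would simply record: ``By Lemma \ref{def}, $\f_G$ is a vertex of $\F_G(\g)$ iff no nonzero signed ${\bf 0}$-flow $\f^{\e}_G$ satisfies $\f_G \pm \f^{\e}_G \in \F_G(\g)$. By Lemma \ref{def2}, this holds iff no nonzero signed ${\bf 0}$-flow on $G$ has support contained in $\mathrm{supp}(\f_G)$, which is the desired equivalence.''
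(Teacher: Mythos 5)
Your proposal is correct and follows exactly the paper's route: the paper also deduces Corollary \ref{cor:def} immediately by combining Lemma \ref{def} with Lemma \ref{def2}. Your extra remark about negating the existence equivalence before substituting is a harmless elaboration of the same one-line argument.
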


\proof Corollary \ref{cor:def} follows from  Lemmas \ref{def} and \ref{def2}.\qed

\begin{lemma} \label{0}
If $H \subset G$ is the support of a  nonzero signed ${\bf 0}$-flow $\f^{\e}_G$, then $H$ contains no vertices of degree 1.
 \end{lemma}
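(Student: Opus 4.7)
The plan is to argue by contradiction, using the signed flow conservation equation \eqref{eqn:flow} at the alleged degree-one vertex. Suppose that $H \subset G$ is the support of a nonzero signed ${\bf 0}$-flow $\f^{\e}_G$, and that $v \in V(H)$ has degree $1$ in $H$; let $e$ be the unique edge of $H$ incident to $v$. By definition of the support, $\f^{\e}_G(e) \neq 0$, while $\f^{\e}_G(e') = 0$ for every other edge $e' \in E(G)$ incident to $v$.

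Now I would plug this into the flow equation at $v$ from \eqref{eqn:flow}, taking $a_v = 0$ because $\f^{\e}_G$ is a ${\bf 0}$-flow:
\[
\sum_{\inc(e',v)=-} \f^{\e}_G(e') \;=\; \sum_{\inc(e',v)=+} \f^{\e}_G(e') \;+\; \sum_{e'=(v,v,+)} \f^{\e}_G(e').
\]
All terms except the one corresponding to $e$ vanish, so the equation collapses to a single nonzero contribution on one side (either $\f^{\e}_G(e) = 0$ if $v$ is the larger endpoint of a negative edge $e$, or $0 = \f^{\e}_G(e)$ if $e$ is positively incident to $v$, or $0 = 2\f^{\e}_G(e)$ if $e$ is a loop at $v$). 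In each case we are forced to conclude $\f^{\e}_G(e) = 0$, contradicting $e$ being in the support.

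There is essentially no obstacle here: the proof is a short, mechanical case check driven entirely by \eqref{eqn:flow}, where the only subtlety is keeping track of the three possible incidence types (negative edge at smaller versus larger endpoint, positive non-loop edge, and positive loop). The content of the lemma is simply that the flow-conservation constraint at a degree-one vertex pins the unique incident flow to zero regardless of the sign data, so any nonzero signed ${\bf 0}$-flow must have all support vertices of degree at least $2$ in $H$.
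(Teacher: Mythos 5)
Your proof is correct and is essentially the paper's argument: the paper simply asserts that a degree-one vertex in the support would violate the ${\bf 0}$-flow condition, and you have spelled out the same observation by writing the conservation equation \eqref{eqn:flow} at that vertex and noting the lone nonzero term forces the flow on the unique incident edge to vanish. The extra case check over incidence types is a harmless elaboration of what the paper leaves implicit.
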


\proof If $H$ contained a degree $1$ vertex, $\f^{\e}_G$ with support $H$ could not be a ${\bf 0}$-flow.
\qed

\medskip 
A \textbf {cycle} $C$ is a sequence of oriented edges $e_1, \ldots, e_k$ such that the second vertex of $e_i$ is the first vertex of $e_{i+1}$ for $i \in [k]$ and with $k+1$ identified with $1$. The number of \textbf{turns} in $C$ is the number of times two consecutive edges meet at a vertex of $C$ such that the edges of $C$ are incident with the same sign to that vertex (repetition of vertices allowed). A cycle $C$ of the graph $G$ is called \textbf{even} if  it has an even number of turns and \textbf{odd} otherwise. 
See Figure \ref{turns}.

\begin{figure} 
\centering
\subfigure[]{
\includegraphics[width=4.5cm]{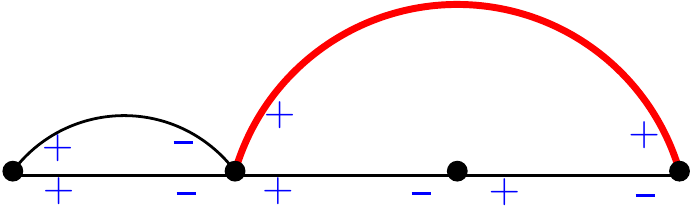}
\label{exslide2}
}
\qquad
\subfigure[]{
\includegraphics[width=3cm]{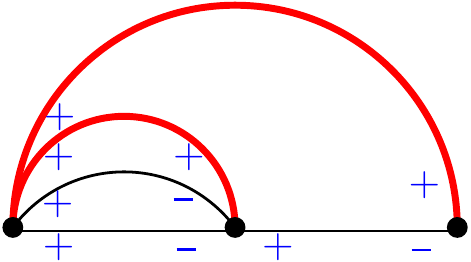}
\label{exslide3}
}
\caption{Regardless of how we order the edges above to form a cycle, the number of turns in the cycle will be $1$ in \subref{exslide2} and even in \subref{exslide3}. Thus, the resulting cycle in \subref{exslide2} is odd and in \subref{exslide3} is even.}
\label{turns}
\end{figure}

\begin{lemma} \label{order}
Given a set of edges which can be ordered to yield a cycle $C$, the parity of the number of turns of $C$ is the same as that of any other cycle that the edges can be ordered to give.
\end{lemma}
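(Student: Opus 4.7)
The plan is to attach to every cyclic ordering $C = (e_1,\ldots,e_k)$ of the given edge multiset a $\pm 1$-valued quantity $P(C)$ that I can evaluate in two different ways: one evaluation will detect the number of turns modulo $2$, and the other will depend only on the underlying multiset of edges. Comparing the two forces the parity of turns to be a multiset-invariant.

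Concretely, for an edge $e$ and an endpoint $v$ of $e$, set $\sigma(e,v) := +1$ if $e$ is positively incident to $v$ and $-1$ if negatively incident. From the incidence conventions in the paper, this gives $\sigma((i,j,-),i)=+1$, $\sigma((i,j,-),j)=-1$, and $\sigma=+1$ at both endpoints of every positive edge (and at both ``ends'' of a positive loop). Writing $v_i$ for the common vertex of $e_{i-1}$ and $e_i$ (with $e_0 := e_k$), I would then define
\[
P(C) \;=\; \prod_{i=1}^{k} \sigma(e_{i-1},v_i)\,\sigma(e_i,v_i).
\]

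The first evaluation groups factors by position: the $i$-th factor equals $+1$ exactly when $e_{i-1}$ and $e_i$ are incident to $v_i$ with the same sign, i.e., exactly when position $i$ is a turn of $C$. Hence $P(C) = (-1)^{k-t(C)}$, where $t(C)$ denotes the number of turns. The second evaluation regroups the same factors by edge: each $e_j$ appears in exactly two factors, contributing $\sigma(e_j,v_j)\,\sigma(e_j,v_{j+1})$, which is the product of its $\sigma$-values at its two endpoint-slots in $C$. By the definition of $\sigma$, this product equals $-1$ for every negative edge and $+1$ for every positive edge (including loops). Therefore $P(C) = (-1)^{n_-(C)}$, where $n_-(C)$ is the number of negative edges of $C$, and this is plainly independent of the chosen ordering. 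Equating the two expressions yields $t(C) \equiv k - n_-(C) = n_+(C) \pmod{2}$; since $n_+(C)$ depends only on the multiset of edges, so does the parity of $t(C)$, which is the claim.

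The step I expect to be the most delicate is the per-edge contribution in the second evaluation, particularly for loops: a positive loop at a vertex $v$ makes $v$ appear twice in the list $v_1,\ldots,v_k$, so the loop enters $P(C)$ as $\sigma(\mathrm{loop},v)^2$, and it is essential that both incidences of the loop carry sign $+1$, which is precisely the convention imposed by the paper. Once this is checked, the rearrangement identifying $P(C)$ with $(-1)^{n_-(C)}$ holds without exception and the argument is complete.
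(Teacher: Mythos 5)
Your proof is correct. Note that the paper does not actually supply an argument for this lemma (it is explicitly left as an exercise to the reader), so there is nothing to compare against; your double-counting of the sign product $P(C)=\prod_i \sigma(e_{i-1},v_i)\,\sigma(e_i,v_i)$ is a clean way to fill that gap. Grouping by position gives $(-1)^{k-t(C)}$ and grouping by edge gives $(-1)^{n_-}$, and the per-edge bookkeeping is right: each non-loop edge contributes the product of its incidence signs at its two endpoints ($-1$ for a negative edge, $+1$ for a positive edge) independently of the direction in which the cycle traverses it, and a positive loop contributes $\sigma(\cdot,v)^2=+1$, which is exactly where the paper's convention that loops are always positive is needed. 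In fact you prove something sharper than the stated lemma: $t(C)\equiv n_+ \pmod 2$, i.e., the parity of the number of turns equals the parity of the number of positive edges in the multiset. This is consistent with, and immediately explains, the paper's later observations --- e.g., that a cycle with only negative edges is even, and that the cycles in Proposition \ref{even}, each containing exactly one positive edge or loop, are odd.
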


We leave the proof of Lemma \ref{order} as an exercise to the reader. For examples see Figures \ref{exslide2} and \ref{exslide3}.

\begin{lemma} \label{cycle}
If $H \subset G$ is the support of a  nonzero signed ${\bf 0}$-flow $\f^{\e}_G$, then $H$ contains an even cycle. \end{lemma}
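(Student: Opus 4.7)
The plan is to find inside $H$ a cycle (in the paper's loose sense, permitting vertex repetition) with an even number of positive edges, which will then be an even cycle by a parity identity.

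First I would establish that for every cycle $C = v_1 e_1 v_2 \cdots v_k e_k v_1$ in $G$,
\[
t(C) \equiv n_+(C) \pmod{2},
\]
where $t(C)$ is the number of turns and $n_+(C)$ the number of positive edges of $C$. The proof is a sign-flip count around the cyclic sequence of $2k$ incidence signs $\sigma(e_i, v_i), \sigma(e_i, v_{i+1})$: the sign flips across the edge $e_i$ iff $e_i$ is negative (a total of $n_-(C)$ flips), and flips between $\sigma(e_i, v_{i+1})$ and $\sigma(e_{i+1}, v_{i+1})$ iff there is no turn at $v_{i+1}$ (a total of $k - t(C)$ flips). Because the sequence is cyclic, $n_-(C) + k - t(C) \equiv 0 \pmod 2$, which gives the identity. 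Hence ``even cycle'' is the same thing as ``cycle with an even number of positive edges.''

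Next I would use the given flow $\f^{\e}_G$. I would pass to an inclusion-minimal nonzero signed ${\bf 0}$-flow $\f_0$ with support $S_0 \subseteq H$ and analyze $S_0$. Let $S_0^-$ denote the subgraph of negative edges of $S_0$. If $S_0^-$ contains a cycle $C$, then $n_+(C)=0$, so by the identity $C$ has an even number of turns, and a direct solve of the balance equations around $C$ produces a nonzero ${\bf 0}$-flow supported on $C$; minimality then forces $S_0=C$, and we are done. Otherwise $S_0^-$ is a forest, and contracting each of its components to a point yields a multigraph $Q$ whose edges are the positive edges of $S_0$. Using Lemma \ref{0}, every vertex of $Q$ has degree $\ge 2$, so $Q$ contains a cycle; minimality together with a rank/dimension count on $S_0$ then forces $S_0$ to be an ``odd-odd handcuff'', namely two cycles $C_1,C_2$ of $S_0$ each with odd $n_+$ (for otherwise such a $C_i$ would alone support a nonzero ${\bf 0}$-flow by the previous case, contradicting minimality), either sharing a vertex or joined by a simple path $P$ in $S_0^-$.

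In this handcuff case I would exhibit the even cycle as the closed walk $C_1\cdot P\cdot C_2\cdot P^{-1}$ (where $P$ is trivial when the two cycles share a vertex). It is a cycle in the loose sense of the paper, and
\[
n_+(C_1\cdot P\cdot C_2\cdot P^{-1}) \;=\; n_+(C_1)+n_+(C_2)+2\,n_+(P) \;\equiv\; 1+1+0 \;\equiv\; 0 \pmod 2,
\]
so by the parity identity it has an even number of turns, producing the required even cycle inside $H$. The main obstacle is the structural analysis in the forest case---verifying that the contracted multigraph $Q$ lifts back to a handcuff rather than a more intricate configuration. This is essentially the classical characterization of circuits of the signed-graphic matroid, and I would carry it out combinatorially inside $S_0$ using only Lemma \ref{0}, minimality of $S_0$, and the flow-balance equations at each vertex.
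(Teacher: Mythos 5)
Your route is genuinely different from the paper's, and its key ideas are sound. The paper works directly on the support $H$: after Lemma \ref{0} it takes the dimension $k$ of the binary cycle space, settles $k=1$ ``by inspection,'' and for $k>1$ chooses an ear $P$ and compares two completions $P+C_1$, $P+C_2$, again checking parities by inspection. You instead prove the parity identity $t(C)\equiv n_+(C)\pmod{2}$ (the cyclic sign-change count is correct, and it also covers loops, which come out odd) and then reduce to a minimal-support flow, producing the even closed walk $C_1\cdot P\cdot C_2\cdot P^{-1}$. Two remarks. First, the identity is a real gain: it replaces the paper's inspection steps, and your insistence on walks that repeat the edges of $P$ is not just permissible but necessary --- when the support is two odd cycles joined by a bridge path there is \emph{no} even cycle with distinct edges, so the lemma only holds under the paper's loose reading of ``cycle'' (the paper's own $C_1+C_2$ is of the same character, and its prescribed path $P$ with the stated properties need not even exist in such bridged supports). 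Second, the step you defer --- recovering the full handcuff structure of a minimal support, i.e.\ the circuit classification of the signed-graphic matroid --- is more than you need, so your acknowledged obstacle can simply be bypassed: restrict the flow to a connected component $K$ of its support; by Lemma \ref{0} $K$ has minimum degree at least $2$, hence contains a simple cycle; if some simple cycle of $K$ is even (in particular any all-negative one) you are done; otherwise, were $K$ unicyclic it would equal that single odd cycle, whose associated roots are linearly independent, forcing the restricted flow to vanish, a contradiction; hence $K$ contains two distinct simple cycles, both odd, and joining them by a path inside $K$ gives your walk with $n_+\equiv 0\pmod 2$, i.e.\ an even cycle. With that shortcut your argument is complete, and in the bridged cases it is in fact more robust than the paper's own proof.
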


\proof Since by Lemma \ref{0} $H$ contains no vertices of degree 1, each edge of $H$ is contained in at least one cycle. Let $k$ be the number of linearly independent cycles in $H$ in the binary cycle space. If $k=1$ and the nonzero signed ${\bf 0}$-flow $\f^{\e}_G$ has support $H$, then it follows by inspection that $H$ is an even cycle. If $k>1$ and the nonzero signed ${\bf 0}$-flow $\f^{\e}_G$ has support $H$, let   $P \subset H$ be a path such that $H-P$ contains $k-1$ linearly independent cycles and no vertices of degree $1$. If $P$ is contained in an even cycle in $H$, then we are done. If $P$ is not contained in an even cycle of $H$, then there are two paths $C_1$ and $C_2$ in $H$ such that $P+C_1$ and $P+C_2$ are cycles, but not even. Inspection shows that the  cycle $C_1+C_2$ is even.
\qed

\begin{lemma} \label{proper} 
If $C\subset G$ is an even cycle, then there exists a  nonzero signed ${\bf 0}$-flow $\f^{\e}_G$ with support $C$.
\end{lemma}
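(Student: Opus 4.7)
The plan is to construct the flow directly by walking around the cycle and propagating a flow value that alternates in sign precisely when a turn occurs, then verify that the evenness condition is exactly what makes the walk close up consistently.

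More concretely, let $C$ be given as a sequence of oriented edges $e_1, e_2, \ldots, e_k$ (with $e_{k+1}:= e_1$), and let $v_i$ denote the vertex shared by $e_i$ and $e_{i+1}$. For each $i$, define a sign $\sigma_i\in\{+1,-1\}$ by setting $\sigma_i=-1$ if $C$ has a turn at $v_i$ (i.e.\ $e_i$ and $e_{i+1}$ are incident to $v_i$ with the same sign) and $\sigma_i=+1$ otherwise. I would then set $\f^{\e}_G(e_1)=1$, propagate $\f^{\e}_G(e_{i+1})=\sigma_i\,\f^{\e}_G(e_i)$ for $i=1,\ldots,k-1$, and put $\f^{\e}_G(e)=0$ for every edge $e\notin C$. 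This is certainly nonzero and has support exactly $C$.

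Next I would check the two things that need to hold: (a) the propagation is consistent after going all the way around $C$, and (b) the resulting assignment is a signed ${\bf 0}$-flow. For (a), unwinding the recursion gives $\f^{\e}_G(e_1) = \sigma_k\sigma_{k-1}\cdots\sigma_1 \,\f^{\e}_G(e_1) = (-1)^{\#\text{turns}}\f^{\e}_G(e_1)$, which is automatic since $C$ is even; Lemma~\ref{order} ensures that this parity does not depend on the chosen ordering. For (b), I would argue locally at every vertex $v$ visited by $C$: each visit of $C$ to $v$ consists of two consecutive edges $e_i,e_{i+1}$ of the cycle with $v=v_i$, and the contribution of this visit to the signed balance $\sum_{\inc(e,v)=-}b(e)-\sum_{\inc(e,v)=+}b(e)-\sum_{e=(v,v,+)}b(e)$ is computed by cases. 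If the visit is a non-turn, one of $e_i,e_{i+1}$ is negatively incident to $v$ and the other positively, while $\f^{\e}_G(e_{i+1})=\f^{\e}_G(e_i)$, so the two contributions cancel. If the visit is a turn, both edges are incident to $v$ with the same sign, while $\f^{\e}_G(e_{i+1})=-\f^{\e}_G(e_i)$, so again the two contributions cancel. Summing over all visits of $C$ to $v$ shows the balance at $v$ is zero, and vertices not on $C$ are trivial.

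The main obstacle I anticipate is bookkeeping for cycles that revisit vertices (and, in the presence of loops, for a loop $(v,v,+)$ that appears as an edge of $C$, which contributes $2\,\f^{\e}_G(e)$ to the leak at $v$). The point is that the argument above is purely local at each pair of consecutive edges, so it applies unchanged even when $v$ is visited several times, as long as we remember that the total balance at $v$ is the \emph{sum} of the per-visit contributions, each of which is zero. For a loop $e=(v,v,+)$, both of its occurrences in the cycle traversal (as the ``incoming'' and ``outgoing'' edge at $v$) are positively incident to $v$, so the two adjacent transitions at $v$ are both turns, and one checks that the $2\f^{\e}_G(e)$ leak is cancelled by the flows on the two neighboring edges via the same local analysis. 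Once these verifications are in place, the existence of $\f^{\e}_G$ with support $C$ follows.
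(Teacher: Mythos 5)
Your proposal is correct and follows essentially the same route as the paper, whose proof simply assigns flows $\pm\e$ to the edges of $C$ (zero elsewhere) and notes that evenness makes such an assignment consistent; you have just carried out the propagation and the vertex-by-vertex verification explicitly. One small slip: your claim that the two transitions adjacent to a loop $(v,v,+)$ are automatically turns is not true in general (a neighboring edge may be negatively incident to $v$), but in every case the $2\f^{\e}_G(e)$ leak is still cancelled by the two neighboring flows under the same local computation, so the argument is unaffected.
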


\proof Set  $\f^{\e}_G(e)=0$ for $e \in G-C$ and  $\f^{\e}_G(e)\in \{+\e, -\e\}$ for $e \in C$. Note that since $C$ is even there will be two such  nonzero signed ${\bf 0}$-flows $\f^{\e}_G$.
\qed

\begin{lemma} \label{proper2} 
There is a nonzero signed ${\bf 0}$-flow $\f^{\e}_G$ on $G$ whose support is contained in the support of the $\g$-flow $\f_G$ if and only if  the support of  $\f_G$  contains an even cycle.
\end{lemma}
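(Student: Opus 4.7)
The plan is to observe that this lemma is essentially the combination of Lemma \ref{cycle} and Lemma \ref{proper}, which together package the forward and backward directions.

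For the forward direction, I would assume there exists a nonzero signed ${\bf 0}$-flow $\f^{\e}_G$ whose support $H$ is contained in the support of $\f_G$. Then Lemma \ref{cycle} applies directly to $H$: since $H$ is the support of a nonzero signed ${\bf 0}$-flow, it contains an even cycle $C$. Since $H \subseteq \mathrm{supp}(\f_G)$, we conclude $C \subseteq \mathrm{supp}(\f_G)$, as required.

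For the backward direction, I would assume the support of $\f_G$ contains an even cycle $C$. Then Lemma \ref{proper} guarantees a nonzero signed ${\bf 0}$-flow $\f^{\e}_G$ whose support is exactly $C$. Since $C \subseteq \mathrm{supp}(\f_G)$, we have $\mathrm{supp}(\f^{\e}_G) \subseteq \mathrm{supp}(\f_G)$, completing the argument.

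I do not expect any serious obstacle here: both directions are immediate consequences of the preceding lemmas, and no further construction is needed. The only minor point worth being explicit about is that in the backward direction, Lemma \ref{proper} produces a ${\bf 0}$-flow on $G$ (not just on the subgraph $C$) by extending by zero on $G \setminus C$, so the conclusion about containment of supports in $G$ is automatic.
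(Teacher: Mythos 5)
Your proof is correct and follows exactly the paper's own argument: Lemma \ref{cycle} gives the forward direction and Lemma \ref{proper} (extended by zero off the cycle) gives the converse. Nothing more is needed.
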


\proof By Lemma \ref{cycle} if there is a nonzero signed ${\bf 0}$-flow $\f^{\e}_G$ on $G$ with support $H$, then $H$ contains an even cycle. Thus, in particular, if there is a nonzero signed ${\bf 0}$-flow $\f^{\e}_G$ on $G$ whose support is contained in the support of $\f_G$, then the support of  $\f_G$  contains an even cycle. Conversely, 
by Lemma \ref{proper} if $C$ is an even cycle contained in the support of  $\f_G$, then there is a nonzero signed ${\bf 0}$-flow $\f^{\e}_G$ on $G$ whose support is $C$, and thus contained in the support of $\f_G$.
\qed

\begin{theorem} \label{vertices}
An $\g$-flow $\f_G$ on $G$ is a vertex of  $\F_G(\g)$ if and only if  the support of  $\f_G$  contains no even cycle.
\end{theorem}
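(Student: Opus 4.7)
The plan is to obtain Theorem \ref{vertices} essentially for free by chaining together the characterization of vertices established in Corollary \ref{cor:def} with the structural characterization of supports carrying a nonzero signed ${\bf 0}$-flow given in Lemma \ref{proper2}. Both of these statements have already been proved in the excerpt, so the argument amounts to transitivity of ``if and only if.''

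More precisely, I would first invoke Corollary \ref{cor:def}, which says that $\f_G$ is a vertex of $\F_G(\g)$ exactly when there is no nonzero signed ${\bf 0}$-flow $\f^{\e}_G$ on $G$ whose support is contained in the support of $\f_G$. Then I would apply Lemma \ref{proper2}, which asserts that the existence of such a nonzero signed ${\bf 0}$-flow $\f^{\e}_G$ (supported inside the support of $\f_G$) is equivalent to the support of $\f_G$ containing an even cycle. Concatenating these two equivalences gives exactly the statement of the theorem: $\f_G$ is a vertex of $\F_G(\g)$ if and only if the support of $\f_G$ contains no even cycle.

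There is no real obstacle left at this stage, since all of the combinatorial work (ruling out degree-$1$ vertices in supports via Lemma \ref{0}, exhibiting ${\bf 0}$-flows on even cycles via Lemma \ref{proper}, and showing that any support of a nonzero signed ${\bf 0}$-flow must contain an even cycle via Lemma \ref{cycle}) has already been carried out. The only thing I would be careful about is wording the proof so that the logical direction is clear in both implications: the ``only if'' direction uses that an even cycle in the support yields a ${\bf 0}$-flow (Lemma \ref{proper}) that can be rescaled to witness non-extremality (as in the rescaling argument of Lemma \ref{def2}), while the ``if'' direction uses Lemma \ref{cycle} to extract an even cycle from any support of a nontrivial signed ${\bf 0}$-flow inside the support of $\f_G$.

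Thus the proof is a one-line composition: by Corollary \ref{cor:def} and Lemma \ref{proper2}, $\f_G$ is a vertex iff its support contains no even cycle. \qed
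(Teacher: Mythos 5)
Your proposal is correct and is precisely the paper's own proof: Theorem \ref{vertices} is deduced by combining Corollary \ref{cor:def} with Lemma \ref{proper2}, exactly as you describe. Nothing further is needed.
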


\proof Corollary \ref{cor:def} and Lemma \ref{proper2} imply the statement of Theorem \ref{vertices}. 
 \qed

\begin{theorem} \label{vertices2} If $\g=(2, 0, \ldots, 0)$, then the vertices of $\F_G(\g)$ are integer. In particular, the set of  vertices of $\F_G(\g)$ is a subset  of the set of integer $\g$-flows on $G$.   
\end{theorem}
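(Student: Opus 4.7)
Let $\f$ be a vertex of $\F_G(\g)$ with $\g=(2,0,\ldots,0)$, and let $H\subseteq G$ denote its support. By Theorem~\ref{vertices}, $H$ contains no even cycle, which by Lemma~\ref{proper2} is equivalent to the columns of $M_G$ indexed by $E(H)$ being linearly independent; so $\f|_{E(H)}$ is the unique solution of $M_H\f|_{E(H)}=\g$. The plan is to first pin down the combinatorial structure of $H$ and then to solve the resulting system by direct propagation.

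\textbf{Structural step.} First, $H$ is connected and contains vertex~$1$: a component $H'$ missing vertex~$1$ would make $\f|_{H'}$ a nonzero nonnegative $\mathbf{0}$-flow, so $H'$ would contain an even cycle by Lemma~\ref{cycle}, contradicting Theorem~\ref{vertices}. Next, every vertex $v\neq 1$ must have degree $\geq 2$ in $H$: otherwise the unique edge at $v$ is forced to carry zero flow by the balance equation (since $a_v=0$), contradicting $v\in V(H)$. Counting degrees gives $2|E(H)|\geq 1+2(|V(H)|-1)$, so $|E(H)|\geq|V(H)|$, while the linear-independence bound gives $|E(H)|\leq \mathrm{rank}\,M_H\leq|V(H)|$ (the latter holding for any connected signed graph). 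Hence $|E(H)|=|V(H)|$, so $H$ contains exactly one cycle $C$ (necessarily odd), and the ``no leaves except vertex~$1$'' constraint forces $H\setminus C$ to be either empty (when $1\in V(C)$) or a single path $P$ from vertex~$1$ to some attachment vertex $u\in V(C)$.

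\textbf{Propagation step.} If $P$ is present, the balance equation at vertex~$1$ (a leaf of $H$, with $a_1=2$) forces the initial edge of $P$ to carry flow~$2$; at each subsequent internal vertex of $P$ (degree~$2$, $a_v=0$) the two adjacent path edges must be incident to $v$ with opposite signs and carry equal flow (else both flows vanish by balance), so every edge of $P$ carries flow exactly~$2$. For the cycle, the same opposite-incidence argument applied at each vertex of $C$ other than $u$ forces all cycle-edge flows to equal a common value~$f$. Since $C$ is odd and has $0$ turns at its interior vertices, the unique obligatory turn of $C$ must occur at $u$, i.e., the two cycle edges at $u$ share the same incidence there. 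Case-checking the balance at $u$ against the incidence of the path edge (or, when $P$ is absent, against the fact that both cycle edges at $u=1$ have incidence~$+$, since $1$ is the smallest vertex) then gives $2f=2$, so $f=1$. Every edge of $H$ thus carries an integer flow, as required.

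\textbf{Main obstacle.} The heart of the argument is the combinatorial parity analysis at the single ``turn'' of the odd cycle: nonnegativity of $\f$ conspires with the oddness of $C$ to concentrate the lone turn precisely at the vertex $u$ where the factor of~$2$ in $\g$ is available to cancel against the $2f$ from the two same-incidence cycle edges. The structural reduction to ``one odd cycle plus a pendant path from vertex~$1$'' is routine, but it is exactly what makes this cancellation visible and explains why the statement would fail for a generic netflow vector.
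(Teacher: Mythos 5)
Your proof is correct, and it reaches the same structural picture as the paper (support $=$ one odd cycle, possibly with a pendant path attached at vertex $1$) but by a genuinely different route. The paper gets this structure from Proposition~\ref{even}, its explicit classification of cycles with no even subcycles inside the support of a $(2,0,\ldots,0)$-flow (two negative paths joined by a positive edge, a negative path closed by a positive edge, or a loop), and then simply declares the integrality of the forced flow ``evident''; you instead bypass Proposition~\ref{even} entirely: Theorem~\ref{vertices} plus Lemma~\ref{proper2} give linear independence of the support columns, whence $\#E(H)\le \#V(H)$, and the balance equations force degree $\ge 2$ at every vertex other than $1$, so a handshake count pins down $H$ as unicyclic with at most one pendant path from vertex $1$; the integrality is then obtained by explicit propagation along the path and around the cycle, with the parity-of-turns observation locating the unique turn at the attachment vertex where the netflow $2$ cancels against $2f$. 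What your route buys is a self-contained and fully written-out integrality argument (the paper's ``evident'' step) that does not need the cycle classification; what the paper's route buys is the explicit description of the supports, which it reuses immediately in Theorem~\ref{vertices3} and in the vertex counts for $CRYC_{n+1}$ and $CRYD_{n+1}$, information your counting argument does not by itself supply (though it could be extracted from your case analysis). Two cosmetic points: the inequality $\mathrm{rank}\,M_H\le \#V(H)$ needs no connectivity, and when the unique incident edge at a vertex $v\neq 1$ is a loop the ``degree $\ge 2$'' wording should be supplemented by noting that balance still forces its flow to vanish — neither affects the argument.
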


By Theorem \ref{vertices}, in order to prove Theorem \ref{vertices2}, it suffices to show that if  the support of the $(2, 0, \ldots, 0)$-flow $\f_G$  contains no even cycle, then $\f_G$ is an integer flow. To achieve this, we characterize all possible odd cycles with no even subcycles  in the support of  a  $(2, 0, \ldots, 0)$-flow $\f_G$. By a \textbf{subcycle} $C'$ of a cycle $C$ we mean a cycle $C'$ whose edges are a subset of the edges of $C$.

\begin{proposition} \label{even}
A cycle $C$ contained  in the support of  a  $(2, 0, \ldots, 0)$-flow $\f_G$   contains no even subcycles  if and only if its set of edges  is of one of the three following forms:

\begin{compactitem}

\item[(i)] $\{(v_1, v_2, -), \ldots, (v_{k-1}, v_{k}, -)\}\cup \{ (w_1, w_2, -), \ldots, (w_{l-1}, w_{l}, -)\}\cup \{ (w_l, v_{k}, +)\}$, where $v_1=w_1$, $2\leq k, l$ and $v_1, \ldots, v_k, w_2, \ldots, w_l$ are distinct. See Figure \ref{I}.

\item[(ii)]  $\{(v_1, v_2, -), \ldots, (v_{k-1}, v_{k}, -)\}\cup \{ (v_1, v_{k}, +)\}$, where $v_1, \ldots, v_k$ are distinct. See Figure \ref{II}.

\item[(iii)]  $\{(v_1, v_1, +)\}$

\end{compactitem}

\end{proposition}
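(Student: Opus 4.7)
The plan is to prove the two directions of the biconditional separately, relying on a parity principle for signed cycles together with flow-conservation constraints.

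For the ($\Leftarrow$) direction, I would verify that each of the three listed forms defines a cycle with no even subcycles. Case (iii) is a single positive loop $(v,v,+)$, whose only subcycle is itself and which contributes one turn at $v$, hence is odd. Cases (i) and (ii) are simple cycles, so the only subcycle is the cycle itself; a direct incidence computation shows the unique turn of form (ii) occurs at $v_1$ (where both $(v_1,v_k,+)$ and $(v_1,v_2,-)$ are positively incident), while interior vertices have mismatched incidences and $v_k$ likewise has no turn. Form (i) is analogous, with the unique turn at the common apex $v_1=w_1$, so each of these cycles is odd.

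For the ($\Rightarrow$) direction, I would first establish a parity principle: tracking the incidence signs $\chi_v,\psi_v$ of the two edges at each vertex along a simple cycle yields $\prod_v \chi_v\psi_v = (-1)^{\#\text{negative edges}}$, which translates to $\#\text{turns} \equiv \#\text{non-loop positive edges}\pmod 2$. In particular, any purely negative cycle is even. Since the hypothesis forbids even subcycles, $C$ itself is odd and therefore must contain at least one positive edge.

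I would then dispose of two degenerate possibilities. If $C$ contains a positive loop $(v,v,+)$ together with other edges, then removing the loop leaves a subcycle $C'$; a local computation comparing the two loop-transitions of $C$ at $v$ with the single direct transition of $C'$ at $v$ shows $|C|_{\text{turns}} \equiv |C'|_{\text{turns}} + 1 \pmod 2$, forcing one of $C, C'$ to be even, which contradicts the hypothesis. Hence a loop appearing in $C$ must be all of $C$ (case (iii)). A similar parity-splitting argument rules out non-simple cycles: cutting at a revisited vertex gives two subcycles whose turn counts add mod $2$ to that of $C$, forcing one to be even. Thus $C$ is a loopless simple cycle containing at least one positive edge.

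The main obstacle is the remaining structural analysis, where one must show $C$ has exactly one positive edge and that its negative edges form either a single monotone path (form (ii)) or two monotone paths meeting at their smallest common endpoint (form (i)). This is where the flow hypothesis is essential: at any vertex $v \neq 1$ of $C$ where both cycle edges are positively incident---a turn vertex other than the source---the flow balance in $C$ alone fails, so $\f_G$ must place positive flow on a negatively incident edge at $v$ outside $C$. Combining this external edge with a sub-path of $C$ produces a new cycle in the support of $\f_G$, whose parity I can evaluate via the parity principle. In each configuration excluded by (i)--(iii)---namely, more than one positive edge in $C$, or a local maximum of the negative-vertex sequence inside $C$---this forced auxiliary cycle turns out to be even, contradicting the hypothesis (which, in the ambient setting of Theorem~\ref{vertices2}, says the entire support of $\f_G$ has no even cycle). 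Pushing this case analysis through the finitely many local configurations yields exactly the three forms (i), (ii), (iii).
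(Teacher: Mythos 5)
Your opening moves are fine: the verification of the three forms, the parity principle ($\#\text{turns}\equiv\#\text{positive edges}\pmod 2$, hence all-negative cycles are even and an odd cycle has a positive edge), and the parity-splitting arguments excluding a loop inside a larger cycle and excluding non-simple cycles are all correct, and they refine the paper's observations. The gap is exactly the step you defer, the structural core (one positive edge, V-shaped negative part), and the plan you sketch for it would not close. First, the local configurations you propose to attack are the wrong ones: a vertex $v\neq 1$ at which both cycle edges are positively incident is precisely the allowed apex $v_1=w_1$ of forms (i)--(ii), which need not be vertex $1$ (compare Theorem \ref{vertices3}(ii),(iv), where the support simply contains an extra negative path from $1$ down to $v_1$), so no contradiction can come from such a vertex; what has to be excluded is a vertex at which both cycle edges are \emph{negatively} incident (an internal local maximum) and the presence of more than one positive edge, and at a negatively-incident turn the flow balance forces an \emph{outgoing}, positively incident support edge, not an incoming negative one. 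Second, and more seriously, the forced external edge's other endpoint need not lie on $C$, so ``external edge plus a sub-path of $C$'' need not be a cycle at all. For example, take $C=\{(2,3,+),(3,4,+),(2,4,+)\}$ inside the support $\{(1,2,-),(1,3,-),(1,4,-)\}\cup C$ of a $(2,0,0,0)$-flow (give each positive edge flow $1/3$ and each edge $(1,j,-)$ flow $2/3$): $C$ is a simple odd cycle, every forced external edge at a turn of $C$ ends at vertex $1\notin C$, and the even cycle one must exhibit, e.g.\ $\{(1,2,-),(1,3,-),(2,4,+),(3,4,+)\}$, uses two external edges. So the advertised finite local case analysis, as described, does not produce the contradiction.

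There is also a mismatch of hypotheses that you should confront rather than wave at: your contradiction is drawn against ``the entire support of $\f_G$ has no even cycle,'' which is what is available where the proposition is applied (the support of a vertex, via Theorem \ref{vertices}), but it is not the hypothesis of Proposition \ref{even}, which assumes only that $C$ itself has no even subcycle. The example above shows the flow condition on $C$ alone cannot rule out these extra configurations, which is presumably why you reached for the ambient hypothesis; but then what you would prove, even after repairing the auxiliary-cycle construction (which in general needs paths through the support meeting off $C$, not a single edge), is a statement with a strengthened hypothesis, not the proposition as stated. The paper's own (very terse) proof takes a different route that never leaves $C$ and its vertices: it uses that every vertex other than vertex $1$ must carry an incoming negative edge to balance the flow, that a cycle with only negative edges is even, and that contracting a path of negative edges preserves the parity of the number of turns. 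You should either complete your argument under the explicitly stated stronger hypothesis and note where it is used, or rework the final step so that it draws its contradiction from subcycles of $C$ and the flow balance alone, as the paper intends.
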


\begin{figure} 
\centering
\subfigure[]{
\includegraphics[height=2cm]{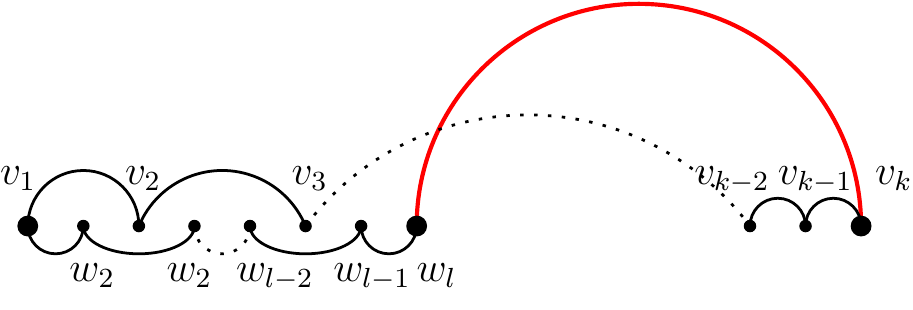}
\label{I}
}
\qquad 
\subfigure[]{
\includegraphics[height=2cm]{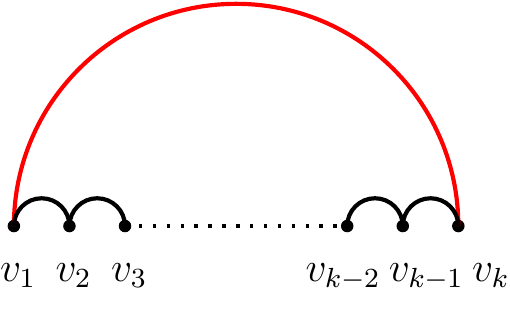}
\label{II}
}
\caption{Illustration of forms (i) and (ii) of Proposition~\ref{even}} 
\end{figure}

\proof One direction is trivial.

To prove the other direction, let $G'$ be the support of $f_G$. Observe that all vertices in $G'$ must have  a negative edge incident to them in order for the netflow to be ${\bf 0}$ at all but the first vertex, unless $G'$ is simply a loop at vertex $1$. Note that a cycle with only negative edges is even. 
Note that a path of negative edges (which is not a cycle) can be contracted without affecting the parity of the number of turns of a cycle. The above observations together are sufficient to prove the non-trivial direction of the proposition.
\qed
\bigskip

\noindent \textit{Proof of Theorem \ref{vertices2}.} Suppose that the $(2, 0, \ldots, 0)$-flow $\f_G$ is a vertex of  $\F_G(2, 0, \ldots, 0)$. Let $G'$ be the support of $f_G$.  Theorem \ref{vertices} and  Proposition \ref{even} imply that $G'$ contains exactly one  cycle $C$ which contains no even subcycle and whose smallest vertex is $v$. If $v=1$ then $G'=C$ and if $v>1$ then $G'$ is the union of $C$ and a path $(1, z_1, -), (z_1, z_2, -), \ldots, (z_m, v, -)$. In both cases it is evident that the flow $\f_G$ has to be integer in order to be a  $(2, 0, \ldots, 0)$-flow.
\qed

Note that the proof of Theorem \ref{vertices2}  characterizes all vertices of $\F_G(2, 0, \ldots, 0)$  very concretely. We summarize the results in Theorem \ref{vertices3}.

\begin{theorem} \label{vertices3}
A $(2, 0, \ldots, 0)$-flow $\f_G$ on $G$ is a vertex of  $\F_G((2, 0, \ldots, 0))$ if and only if  it is the unique integer $(2, 0, \ldots, 0)$-flow on $G$ with  support of one of the following forms:

\begin{compactitem}

\item[(i)] $\{(v_1, v_2, -), \ldots, (v_{k-1}, v_{k}, -)\}\cup \{ (w_1, w_2, -), \ldots, (w_{l-1}, w_{l}, -)\}\cup \{ (w_l, v_{k}, +)\}$, where $v_1=w_1=1$, $2\leq k, l$ and $v_1, \ldots, v_k, w_2, \ldots, w_l$ are distinct.

\item[(ii)] $\{(1, z_1, -), (z_1, z_2, -), \ldots, (z_m, v_1, -)\}\cup \{(v_1, v_2, -), \ldots, (v_{k-1}, v_{k}, -)\}\cup$\\
$\cup \{ (w_1, w_2, -), \ldots, (w_{l-1}, w_{l}, -)\}\cup \{ (w_l, v_{k}, +)\}$, where $v_1=w_1$, $2\leq k, l$ and $v_1, \ldots, v_k$, $w_2, \ldots, w_l$ are distinct. 

\item[(iii)]  $\{(v_1, v_2, -), \ldots, (v_{k-1}, v_{k}, -)\}\cup \{ (v_1, v_{k}, +)\}$, where $1=v_1, \ldots, v_k$ are distinct. 

\item[(iv)]  $\{(1, z_1, -), (z_1, z_2, -), \ldots, (z_m, v_1, -)\}\cup \{(v_1, v_2, -), \ldots, (v_{k-1}, v_{k}, -)\}\cup \{ (v_1, v_{k}, +)\}$, where $v_1, \ldots, v_k$ are distinct.

\item[(v)]  $\{(v_1, v_1, +)\}$

\item[(vi)]  $\{(1, z_1, -), (z_1, z_2, -), \ldots, (z_m, v_1, -)\}\cup \{(v_1, v_1, +)\}$

\end{compactitem}

\end{theorem}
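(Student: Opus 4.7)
The plan is to unpack and formalize the argument sketched in the proof of Theorem~\ref{vertices2}, turning its qualitative description of the support of a vertex into the explicit enumeration (i)--(vi). My starting point is that by Theorem~\ref{vertices2} every vertex $\f_G$ of $\F_G(2,0,\ldots,0)$ is integer-valued, and by Theorem~\ref{vertices} its support $G'$ contains no even cycle. From these I extract two structural constraints. The flow condition \eqref{eq:flowconstraint} gives $2y=2$, so the total flow on positive edges and loops of $G'$ equals $1$; integrality then forces $G'$ to contain exactly one positive edge or loop $p$, carrying flow $1$. On the other hand, the subgraph $G''=G'\setminus\{p\}$ has only negative edges, and any cycle of negative edges has an even number of turns (matching peaks with valleys around the cycle), hence is even; since $G'$ has no even cycle, $G''$ must be a forest.

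Next I would argue that $G'$ contains exactly one cycle $C$, and that any non-cycle edges of $G'$ form a single negative-edge path from vertex $1$ to the smallest vertex of $C$. Since $G''$ is a forest, adjoining the single edge $p$ creates at most one new cycle, so $G'$ has at most one cycle. A degree-$1$ argument analogous to Lemma~\ref{0} shows that $G'$ must contain a cycle: otherwise it is a forest and any leaf $v>1$ would have zero netflow, forcing its unique incident edge to carry zero flow and contradicting its presence in $G'$; the only escape is the degenerate case $G'=\{(1,1,+)\}$, which is form (v). Thus in all other cases $G'$ has exactly one cycle $C$ through $p$, and by Proposition~\ref{even}, $C$ is of one of the three shapes in that proposition. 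Letting $v$ be the smallest vertex of $C$, the remaining edges of $G'$ form a forest attached to $C$; reapplying the leaf argument forces this forest to be either empty (when $v=1$) or a single negative-edge path from vertex $1$ to $v$ (when $v>1$). Pairing the three cycle shapes with the two options for $v$ produces exactly the six forms (i)--(vi).

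Finally I would verify that once the support is one of these six forms the integer flow is uniquely determined. Flow conservation forces $b(e)=2$ on each edge of any prepended path $(1,z_1,-),\ldots,(z_m,v_1,-)$; these $2$ units then split at $v_1$ into the two branches of $C$ and recombine at $p$, forcing $b(e)=1$ on every negative edge of $C$ and $b(p)=1$ in cases (i)--(iv), and $b(p)=1$ for the loop $p$ in cases (v),(vi). A direct check of \eqref{eqn:flow} at each vertex confirms that this assignment is indeed a $(2,0,\ldots,0)$-flow. For the reverse direction, any integer $(2,0,\ldots,0)$-flow whose support has one of the six listed shapes contains no even cycle (by Proposition~\ref{even} for the cycle part, and because any appended path is acyclic), hence is a vertex of $\F_G(2,0,\ldots,0)$ by Theorem~\ref{vertices}.

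The main obstacle is the middle step: ruling out extra tree components or pendant subtrees attached to $C$ outside of the single prepended path. The key observation is that the budget $2y=2$ allows only one positive edge or loop in the support, which in turn allows only one cycle to serve as an ``escape route'' for flow; the remaining forest structure is then tightly controlled by the leaf-free requirement imposed by flow conservation at vertices of zero netflow, leaving room for at most one negative-edge path, which must be anchored at vertex $1$.
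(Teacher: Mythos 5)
Your proposal is correct in outline and follows essentially the same route as the paper, which proves Theorem~\ref{vertices3} simply by pointing to the proof of Theorem~\ref{vertices2}: combine Theorem~\ref{vertices} (no even cycle in the support) with Proposition~\ref{even} (the possible odd cycles), deduce that the support is the cycle $C$ possibly preceded by a negative path from vertex $1$ to the smallest vertex of $C$, and observe that the flow on such a support is forced and integral. Your only genuine deviation is the pleasant shortcut of invoking the already-proved integrality (Theorem~\ref{vertices2}) together with $\sum_{e \text{ positive}} b(e)=1$ to conclude that the support carries exactly one positive edge or loop; the paper instead gets this from the shape of the unique cycle.

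One step is under-justified as written: you claim that ``reapplying the leaf argument'' forces the attached path to end at the \emph{smallest} vertex $v$ of $C$. The degree-one argument (in the spirit of Lemma~\ref{0}) only shows that every vertex other than $1$ has degree at least $2$ in the support, hence that the non-cycle edges form a single path from $1$ to \emph{some} vertex $u$ of $C$; it does not identify $u$ with $v$. To pin this down (and likewise to exclude a component of the support that is a cycle avoiding vertex $1$) you need the slightly stronger local observation: at any vertex $w>1$ with zero netflow all of whose incident support edges are positively incident to $w$ (outgoing or leaking), conservation forces those flows to vanish. In each of the three cycle shapes of Proposition~\ref{even} both cycle edges at the smallest vertex are positively incident to it, so if the path attached anywhere else (or if $1$ were not connected to $C$), the smallest cycle vertex would force zero flow on its cycle edges, contradicting that they lie in the support. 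This is a one-line check in exactly the spirit of your leaf argument, so the gap is easily repaired, but it is where the identification $u=v$ actually comes from.
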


\subsection{Vertices of the type $D_{n+1}$ and type $C_{n+1}$ Chan-Robbins-Yuen polytope}  Theorem \ref{vertices3} gives a hands-on characterization of the vertices of any type $D_{n+1}$ and type $C_{n+1}$ flow polytope. In this section we show how to use it to count the number of vertices of the  type $C_{n+1}$ and type $D_{n+1}$ Chan-Robbins-Yuen polytopes $CRYD_{n+1}$ and $CRYC_{n+1}$. 

Recall that the flow polytope $\F_{K_{n+1}}(1, 0, \ldots, 0, -1)$ of the complete graph $K_{n+1}$ from Examples~\ref{mainexs} (iii) is  the Chan-Robbins-Yuen polytope $CRYA_n$ \cite{CRY}.  One way to generalize $CRYA_n$ is to consider the complete signed graphs in type $C_{n+1}$ and type $D_{n+1}$ (see Examples~\ref{mainexs} (iv), (v)). 

Let $K^{D}_{n+1}$ be the complete signed graph on $n+1$ vertices  of type $D_{n+1}$ (all edges of the form $(i,j,\pm)$ for $1\leq i<j\leq n+1$ corresponding to all the positive roots $e_i \pm e_j$ for $1\leq i<j\leq n+1$ in type $D_{n+1}$). Then the polytope $CRYD_{n+1}=\mathcal{F}_{K_{n+1}^{D}}(2,0,\ldots,0)$ is an analogue of the Chan-Robbins-Yuen polytope. The vector $(2, 0, \ldots, 0)$ is the highest root of type $C_{n+1}$, and we pick this vector as opposed to the highest root of type $D_{n+1}$, because we would like the vertices of $CRYD_{n+1}$ to be integral. If we were to study $\mathcal{F}_{K_{n+1}^{D}}(1,1,0,\ldots,0)$, where $(1,1, 0, \ldots, 0)$ is the highest root of type $D_{n+1}$, the vertices of this polytope would not be integral (for example, two of the seventeen vertices of the flow polytope $\mathcal{F}_{K_4}^D(1,1,0,0)$ are rational).   Note that any signed graph on the vertex set $[n+1]$, including $K^{D}_{n+1}$, can be considered a type $C_{n+1}$ graph, so that the choice of the highest root of $C_{n+1}$ is not unnatural. 

Let $K_{n+1}^C$ be the complete signed graph together with loops $(i,i,+)$, $1\leq i\leq n+1$,  corresponding to the type $C_{n+1}$ positive roots $2\ee_i$ and let  $CRYC_{n+1}=\mathcal{F}_{K_{n+1}^{C}}(2,0,\ldots,0)$.

\begin{proposition} \label{cryd}  The polytope $CRYD_{n+1}$ has $3^n-2^n$ vertices.
\end{proposition}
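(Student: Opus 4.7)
The plan is to read off the vertices directly from the classification in Theorem~\ref{vertices3}. Since $K^D_{n+1}$ has no loops $(i,i,+)$, forms (v) and (vi) of Theorem~\ref{vertices3} cannot occur, so every vertex has support of one of the forms (i)--(iv); by that theorem each such support determines a \emph{unique} vertex, so it suffices to enumerate admissible supports.

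I would parameterize each support by its \emph{branching vertex} $p$, namely the minimum vertex of its unique odd cycle: forms (i) and (iii) correspond to $p = 1$, while forms (ii) and (iv) correspond to $p \geq 2$ together with a negative leading path from $1$ to $p$ whose interior is any subset of $\{2,\ldots,p-1\}$ (giving $2^{p-2}$ choices when $p \geq 2$, and a single ``empty'' choice when $p = 1$). These leading-path choices are then crossed with an ``upper'' structure above $p$ on the vertex set $\{p+1,\ldots,n+1\}$, which is either a pair of disjoint nonempty subsets $(S,T)$ with $\max T < \max S$, encoding the two ascending arms of the cycle joined by the positive edge $(\max T, \max S, +)$ (forms (i), (ii)); or a single nonempty subset $S$, encoding one arm joined to $p$ by the positive edge $(p, \max S, +)$ (forms (iii), (iv)).

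The key intermediate computation is to show that the number of upper structures equals $(3^u - 1)/2$, where $u := n+1-p$. The two-arm case contributes $(3^u - 2 \cdot 2^u + 1)/2$: assign each of the $u$ vertices to $S$, $T$, or neither (giving $3^u$), enforce $S, T \neq \emptyset$ by inclusion--exclusion, then halve to impose the symmetry-breaking convention $\max T < \max S$. The single-arm case contributes $2^u - 1$. Their sum simplifies to $(3^u - 1)/2$.

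Summing over $p$ then gives
\[
\#\{\text{vertices of } CRYD_{n+1}\} \;=\; \frac{3^n - 1}{2} \;+\; \sum_{p=2}^{n+1} 2^{p-2} \cdot \frac{3^{n+1-p} - 1}{2},
\]
which collapses to $3^n - 2^n$ via the finite geometric identity $\sum_{j=0}^{n-1} 2^{n-1-j}\cdot 3^j = 3^n - 2^n$. The only subtle point in the argument is the convention $\max T < \max S$, which prevents double-counting between the two interchangeable arms of the cycle; all other steps are routine inclusion--exclusion and geometric summation.
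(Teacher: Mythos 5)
Your proof is correct, and it takes a genuinely different counting route from the paper's, although both arguments rest on the same key input, the classification of supports in Theorem~\ref{vertices3} (together with the easily checked fact, left implicit in the paper as well, that every admissible support is realized by a nonnegative integer $(2,0,\ldots,0)$-flow, so supports biject with vertices). The paper argues by induction on $n$: supports with a nontrivial leading path (your $p\geq 2$ cases, forms (ii) and (iv)) are counted via the inductive hypothesis, as $\{(1,i,-)\}$ glued onto a vertex support of $CRYD_{n+2-i}$ on the vertex set $\{i,\ldots,n+1\}$, while the $p=1$ supports (forms (i) and (iii)) are counted directly as $\sum_{1<i<j\leq n+1}3^{i-2}2^{j-i-1}$ and $\sum_{i=2}^{n+1}2^{i-2}$, and the three contributions are summed. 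You instead enumerate directly: grouping by the branching vertex $p$, counting leading paths by $2^{p-2}$, and counting upper structures in the closed form $(3^{u}-1)/2$ via the three-way assignment of each vertex above $p$ to one arm, the other arm, or neither, with inclusion--exclusion and the symmetry-breaking convention $\max T<\max S$ (legitimate since $S\cap T=\varnothing$ forces $\max S\neq\max T$); your two-arm count $(3^{u}-2\cdot 2^{u}+1)/2$ for $p=1$ indeed equals the paper's double sum. What each approach buys: yours is non-inductive, gives a clean closed form per branching vertex, and makes the final geometric summation transparent; the paper's recursion exploits the self-similar structure of $CRYD$ (a leading edge $(1,i,-)$ followed by a smaller $CRYD$ support) and its three-type decomposition is reused almost verbatim in the proof of Proposition~\ref{cryc} for $CRYC_{n+1}$.
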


\proof We prove the statement by induction. The base of induction is clear. Suppose that $CRYD_{n}$ has $3^{n-1}-2^{n-1}$ vertices. Using Theorem \ref{vertices3} we see that the vertices of $CRYD_{n+1}$ have to be the unique integer $(2, 0, \ldots, 0)$-flows on $G$ with  support of the form:

\begin{itemize}

\item $\{(1, i, -)\}\cup S(i, n+1)$, where $2\leq i \leq n$ and $S(i, n+1)$ is the support of a vertex of $CRYD_{n+2-i}$ where we consider the flow graph of  $CRYD_{n+2-i}$ to be on the vertex set $\{i, i+1, \ldots, n+1\}$.

\item $\{(v_1, v_2, -), \ldots, (v_{k-1}, v_{k}, -)\}\cup \{ (w_1, w_2, -), \ldots, (w_{l-1}, w_{l}, -)\}\cup \{ (w_l, v_{k}, +)\}$, where $v_1=w_1=1$, $2\leq k, l$ and $v_1, \ldots, v_k, w_2, \ldots, w_l$ are distinct.

\item  $\{(v_1, v_2, -), \ldots, (v_{k-1}, v_{k}, -)\}\cup \{ (v_1, v_{k}, +)\}$, where $1=v_1, \ldots, v_k$ are distinct. 

\end{itemize}

Call the supports of the above forms of type I, II and II, respectively.

By induction, the number of vertices of $CRYD_{n+1}$ of type I is $$\sum_{i=2}^n (3^{n+1-i}-2^{n+1-i}).$$ 

By inspection, the number of vertices of $CRYD_{n+1}$ of type II is $$\sum_{1<i<j\leq n+1} (3^{i-2}2^{j-i-1}).$$ 

Finally, the number of vertices of $CRYD_{n+1}$ of type III is $$\sum_{i=2}^{n+1} 2^{i-2}.$$ 

It is a matter of simple algebra to show that 

$$\sum_{i=2}^n (3^{n+1-i}-2^{n+1-i})+\sum_{1<i<j\leq n+1} (3^{i-2}2^{j-i-1})+\sum_{i=2}^{n+1} 2^{i-2}=3^n-2^n.$$
\qed

\begin{proposition} \label{cryc} The polytope $CRYC_{n+1}$ has $3^n$ vertices.
\end{proposition}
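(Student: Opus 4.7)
The plan is to mirror the inductive argument of Proposition~\ref{cryd}, but enlarging the case analysis to accommodate the loop edges $(i,i,+)$ that distinguish $K_{n+1}^{C}$ from $K_{n+1}^{D}$. Write $c_m := |V(CRYC_m)|$; the base case $c_1=1$ is the unique flow placing weight $1$ on the loop at vertex $1$. Inductively assume $c_m = 3^{m-1}$ for all $m\le n$.

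For the inductive step, I apply Theorem~\ref{vertices3} and partition the six support forms (i)--(vi) listed there into four disjoint types, according to which edges are incident to vertex~$1$:
\begin{compactitem}
\item[(I)] Form (v): the single loop $\{(1,1,+)\}$.
\item[(II)] Form (i): two negative edges leaving vertex~$1$, starting two vertex-disjoint increasing paths in $\{2,\dots,n+1\}$ that are closed off by a positive edge between their two maxima.
\item[(III)] Form (iii): a single negative path $1=v_1,v_2,\dots,v_k$ closed by the positive edge $(1,v_k,+)$.
\item[(IV)] Forms (ii), (iv), (vi): the unique vertex-$1$ edge is a negative edge $(1,i,-)$, and the remaining edges form the support of a vertex of $CRYC_{n+2-i}$ on the vertex set $\{i,\dots,n+1\}$.
\end{compactitem}

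Each type I will count separately. Type~(I) contributes~$1$. For Type~(II), since each path is determined by its vertex set in increasing order, the count equals the number of unordered pairs $\{A,B\}$ of disjoint nonempty subsets of $\{2,\dots,n+1\}$, which by a short inclusion--exclusion equals $\tfrac12(3^{n}-2^{n+1}+1)$. For Type~(III), summing $2^{v_k-2}$ over the choices of $v_k \in \{2,\dots,n+1\}$ for the intermediate vertices gives $2^{n}-1$. For Type~(IV), when $(1,i,-)$ is the unique vertex-$1$ edge, the two units of outflow at vertex~$1$ are forced along it, reducing the residual problem on $\{i,\dots,n+1\}$ to an isomorphic copy of $CRYC_{n+2-i}$ with netflow $(2,0,\dots,0)$; by the inductive hypothesis this contributes $\sum_{i=2}^{n+1} 3^{n+1-i} = \tfrac12(3^{n}-1)$. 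The four contributions telescope to $3^n$ after routine algebra.

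The main obstacle is the bookkeeping for Type~(IV): I need to verify that prepending $(1,i,-)$ to a vertex support of $CRYC_{n+2-i}$ of form (i), (iii), or (v) yields exactly a support of form (ii), (iv), or (vi) at vertex~$1$, respectively, whereas prepending to a support of form (ii), (iv), or (vi) at $i$ merely extends the initial negative path and reproduces the same form at~$1$. Once this correspondence is confirmed the four types provably partition $V(CRYC_{n+1})$ without overlap, and the remaining computation is mechanical.
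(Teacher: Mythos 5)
Your proof is correct, but it takes a different route from the paper's. The paper's own proof of this proposition is a two-line reduction: by Theorem~\ref{vertices3}, the vertex set of $CRYC_{n+1}$ is the vertex set of $CRYD_{n+1}$ (already counted as $3^n-2^n$ in Proposition~\ref{cryd}) together with the loop-containing supports of forms (v) and (vi), of which there are exactly $1+(2^n-1)=2^n$ (form (vi) supports are nonempty subsets of $\{2,\dots,n+1\}$, since a negative path is forced to be increasing), giving $3^n$. You instead run a self-contained induction on $CRYC$ itself, mirroring the structure of the paper's proof of Proposition~\ref{cryd} rather than invoking its result: your four types partition the six support forms, and your counts $1$, $\tfrac12(3^n-2^{n+1}+1)$, $2^n-1$, and $\sum_{i=2}^{n+1}3^{n+1-i}=\tfrac12(3^n-1)$ are all correct and do sum to $3^n$ (e.g.\ for $n=2$ they give $1+1+3+4=9$). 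The ``main obstacle'' you flag, namely that prepending the pendant edge $(1,i,-)$ to a vertex support of $CRYC_{n+2-i}$ on $\{i,\dots,n+1\}$ produces exactly the supports of forms (ii), (iv), (vi) and nothing else, does hold: since negative edges always go from a smaller to a larger vertex, every vertex occurring after the pendant edge is at least $i$, and the case analysis you describe (core forms (i),(iii),(v) becoming (ii),(iv),(vi), the latter three being stable under path extension) is precisely the observation the paper uses implicitly for type~I supports in the proof of Proposition~\ref{cryd}. What each approach buys: the paper's argument is shorter and isolates the new type~$C$ contribution ($2^n$ loop supports) cleanly, but depends on Proposition~\ref{cryd}; yours is independent of the $CRYD$ count (in effect re-proving it along the way, since your types~II and~IV absorb the paper's type~I--III counts from that proof), at the cost of heavier bookkeeping.
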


\proof Using Theorem \ref{vertices3} we see that the set of vertices of $CRYC_{n+1}$ is equal to the set of vertices of $CRYD_{n+1}$ together with the vertices which are the  unique integer $(2, 0, \ldots, 0)$-flows on $G$ with  support of the form:

\begin{itemize}

\item  $\{(v_1, v_1, +)\}$

\item  $\{(1, z_1, -), (z_1, z_2, -), \ldots, (z_m, v_1, -)\}\cup \{(v_1, v_1, +)\}$

\end{itemize}

By Proposition \ref{cryd} the number of vertices of $CRYD_{n+1}$ is $3^n-2^n$ and the number of vertices of the form described above is $2^n$. Thus, Proposition \ref{cryc} follows.

\qed

 \section{Reduction rules of the flow polytope $\F_G(\aa)$}

 \label{sec:red}

In this section we propose an algorithmic way of triangulating the flow polytope $\F_G(\aa)$.  This also yields a systematic way to calculate the volume of $\F_G(\aa)$ by summing the volumes of the simplices  in the triangulation. The process of triangulation of  $\F_G(\aa)$ is closely related to the triangulation of root polytopes by subdivision algebras, as studied by M\'esz\'aros in \cite{m2, m1}. 

Given a signed graph $G$ on the vertex set $[n+1]$, if we have two edges incident to vertex $i$ with opposite signs, e.g., $(a,i,-),(i,b,+)$ with flows $p$ and $q$, we  add a new edge not incident to $i$,  e.g., $(a,b,+)$, and discard one or both of the original edges to obtain graphs $G_1,G_2$, and $G_3$, respectively. We then reassign flows to preserve the original netflow on the vertices. We look at all possible cases and obtain the reduction rules (R1)-(R6) in Figure~\ref{fig:subdivrules}.



\subsection{Reduction rules for signed graphs} 
   
   Given   a graph $G$ on the vertex set $[n+1]$ and   $(a, i, -), (i, b, -) \in E(G)$ for some $a<i<b$, let   $G_1, G_2, G_3$ be graphs on the vertex set $[n+1]$ with edge sets
  \begin{align*} 
E(G_1)&=E(G)\backslash \{(a, i,-)\} \cup \{(a, b, -)\},  \\
E(G_2)&=E(G)\backslash \{(i, b,-)\} \cup \{(a, b,-)\}, \tag{R1} \\ 
E(G_3)&=E(G)\backslash \{(a, i,-)\} \backslash \{(i, b,-)\} \cup \{(a, b,-)\}. 
\end{align*}
    Given   a graph $G$ on the vertex set $[n+1]$ and   $(a, i, -), (i, b, +) \in E(G)$ for some $a<i<b$, let   $G_1, G_2, G_3$ be graphs on the vertex set $[n+1]$ with edge sets
  \begin{align*} 
E(G_1)&=E(G)\backslash \{(a, i,+)\} \cup \{(a, b, +)\},  \\
E(G_2)&=E(G)\backslash \{(i, b,-)\} \cup \{(a, b, +)\},\tag{R2} \\ 
E(G_3)&=E(G)\backslash \{(a, i,-)\} \backslash \{(i, b, +)\} \cup \{(a, b, +)\}. 
\end{align*}
   Given   a graph $G$ on the vertex set $[n+1]$ and   $(a, i, -), (b, i, +) \in E(G)$ for some $a<b<i$, let   $G_1, G_2, G_3$ be graphs on the vertex set $[n+1]$ with edge sets
  \begin{align*} 
E(G_1)&=E(G)\backslash \{(a, i, -)\} \cup \{(a, b, +)\},  \\
E(G_2)&=E(G)\backslash \{(b, i, +)\} \cup \{(a, b, +)\},\tag{R3} \\ 
E(G_3)&=E(G)\backslash \{(a, i, -)\} \backslash \{(b, i, +)\} \cup \{(a, b, +)\}. 
\end{align*}
   Given   a graph $G$ on the vertex set $[n+1]$ and   $(a, i, +), (b, i, -) \in E(G)$ for some $a<b<i$, let   $G_1, G_2, G_3$ be graphs on the vertex set $[n+1]$ with edge sets
  \begin{align*} 
E(G_1)&=E(G)\backslash \{(a, i, +)\} \cup \{(a, b, +)\},  \\
E(G_2)&=E(G)\backslash \{(b, i, -)\} \cup \{(a, b, +)\}, \tag{R4} \\ 
E(G_3)&=E(G)\backslash \{(a, i, +)\} \backslash \{(b, i, -)\} \cup \{(a, b, +)\}. 
\end{align*}

 Given   a graph $G$ on the vertex set $[n+1]$ and   $(a, i, -), (a, i, +) \in E(G)$ for some $a<i $, let   $G_1, G_2, G_3$ be graphs on the vertex set $[n+1]$ with edge sets
  \begin{align*} 
E(G_1)&=E(G)\backslash \{(a, i, +)\} \cup \{(a, a, +)\},  \\
E(G_2)&=E(G)\backslash \{(a, i, -)\} \cup \{(a, a, +)\},\tag{R5} \\ 
E(G_3)&=E(G)\backslash \{(a, i, +)\} \backslash \{(a, i, +)\}   \cup \{(a, a, +)\}. 
\end{align*}

Given   a graph $G$ on the vertex set $[n+1]$ and   $(a, i, -), (i, i, +) \in E(G)$ for some $a<i $, let   $G_1, G_2, G_3$ be graphs on the vertex set $[n+1]$ with edge sets
  \begin{align*} 
E(G_1)&=E(G)\backslash \{(a, i, -)\} \cup \{(a, i, +)\},  \\
E(G_2)&=E(G)\backslash \{(i, i, +)\} \cup \{(a, i, +)\},\tag{R6} \\ 
E(G_3)&=E(G)\backslash \{(a, i, -)\} \backslash \{(i, i, +)\} \cup \{(a, i, +)\}. 
\end{align*}

    We say that $G$ \textbf{reduces} to $G_1, G_2, G_3$ under the reduction rules (R1)-(R6). Figure \ref{fig:subdivrules} shows these reduction rules graphically.


 \begin{proposition} \label{red} Given a signed graph $G$  on the vertex set $[n+1]$, a vector $\aa \in \Z^{n+1}$, and two edges $e_1$ and $e_2$ of $G$ on which one of the reductions (R1)-(R6) can be performed yielding the graphs $G_1, G_2, G_3$, then 
 $$\F_G(\aa)=\F_{G_1}(\aa) \bigcup \F_{G_2}(\aa),\quad \F_{G_1}(\aa) \bigcap \F_{G_2}(\aa)=\F_{G_3}(\aa),\quad  \text{ and } \F_{G_1}(\aa)^\circ \bigcap \F_{G_2}(\aa)^\circ=\varnothing,$$
 
 \noindent where $\mathcal{P}^\circ$ denotes the interior of $\mathcal{P}$.
\end{proposition}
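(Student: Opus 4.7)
The plan is to exhibit, for each reduction rule, a piecewise-linear correspondence between $\F_G(\aa)$ and $\F_{G_1}(\aa) \cup \F_{G_2}(\aa)$ that identifies $\F_{G_3}(\aa)$ with the common face $\F_{G_1}(\aa) \cap \F_{G_2}(\aa)$. The six rules are entirely parallel in structure, so I would write out (R1) in full and then record what changes for the others. For (R1), fix $e_1=(a,i,-)$, $e_2=(i,b,-)$, $e=(a,b,-)$ and view all four polytopes inside the common ambient space $\R^{E(G) \cup \{e\}}$, extending each flow by zero on the coordinates it lacks.

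The central device is the \emph{transfer map}. For $f \in \F_G(\aa)$ with $f(e_1)=p$, $f(e_2)=q$, and $t \in [0,\min(p,q)]$, let $T_t(f)$ agree with $f$ outside $\{e_1,e_2,e\}$ and take the values $p-t,\ q-t,\ t$ on $e_1,e_2,e$. A one-line check at the three affected vertices shows that $T_t(f)$ has the same netflow $\aa$ as $f$: at $a$ and at $b$ the outgoing (resp.\ incoming) totals are unchanged because $e$ absorbs exactly the $t$ units removed from $e_1$ or $e_2$; at the middle vertex $i$ both sides of the flow equation drop by $t$ and so remain balanced. Choosing $t=\min(p,q)$ zeroes one of $e_1, e_2$, placing $T_t(f)$ inside $\F_{G_1}(\aa)$ (when $p \le q$) or $\F_{G_2}(\aa)$ (when $q \le p$). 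This proves the inclusion $\F_G(\aa) \subseteq \F_{G_1}(\aa) \cup \F_{G_2}(\aa)$.

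For the reverse inclusion, given $g \in \F_{G_1}(\aa)$ with $g(e)=t$ and $g(e_2)=s$, I would construct $\phi_1(g) \in \F_G(\aa)$ by setting $f(e_1)=t$, $f(e_2)=s+t$, $f(e)=0$ and leaving $g$ unchanged elsewhere; the same vertex-by-vertex check confirms this is a flow on $G$ with netflow $\aa$, and $\phi_2$ is analogous. These inverse maps identify $\F_{G_1}(\aa)$ and $\F_{G_2}(\aa)$ with the subsets $\{f \in \F_G(\aa): f(e_1)\le f(e_2)\}$ and $\{f \in \F_G(\aa): f(e_2)\le f(e_1)\}$; their union is $\F_G(\aa)$, the relative interiors (strict inequalities) are disjoint, and the intersection is the slice $\{f: f(e_1)=f(e_2)\}$, which matches $\F_{G_3}(\aa)$ (both $e_1, e_2$ absent, common value carried by $e$). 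This settles all three assertions for (R1).

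Reductions (R2)--(R4) have $e_1, e_2$ meeting at a single vertex $i$ with mixed incidences and a positive or negative new edge; the same transfer map works verbatim with the same three-vertex local check. Reductions (R5) and (R6) involve a loop $e=(a,a,+)$, and the single extra ingredient is that a positive loop is a leak of $2t$ units at its vertex, contributing $2t$ to the outgoing side of the flow equation---which is exactly what absorbs the $t$ units removed from each of $e_1, e_2$ at their common endpoint. With that observation noted, the proof goes through uniformly. I do not expect any conceptual obstacle beyond the bookkeeping of the six signed configurations, and in particular handling the loop convention carefully in (R5)--(R6).
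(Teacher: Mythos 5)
Your proof is correct and is essentially the argument the paper intends: the paper leaves Proposition \ref{red} to the reader with a pointer to Figure \ref{fig:subdivrules}, and your transfer/reassignment maps identifying $\F_{G_1}(\aa)$ and $\F_{G_2}(\aa)$ with the slices $\{f\in\F_G(\aa): f(e_1)\le f(e_2)\}$ and $\{f\in\F_G(\aa): f(e_2)\le f(e_1)\}$, with common slice $\{f(e_1)=f(e_2)\}$ corresponding to $\F_{G_3}(\aa)$, are exactly the flow reassignments depicted there (equivalently, they preserve $M_G\f=\aa$ because the new edge's column is $\vv(e_1)+\vv(e_2)$). The only wording to adjust is for (R6), where the loop is the original edge $e_2=(i,i,+)$ rather than the new edge, though your ``loop counts $2t$ at its vertex'' bookkeeping still balances the equation at vertex $i$ there.
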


The proof of Proposition \ref{red} is left to the reader. Figure \ref{fig:subdivrules} and the definition of a flow polytope is all that is needed!

\begin{figure}
\centering
\includegraphics[height=8cm]{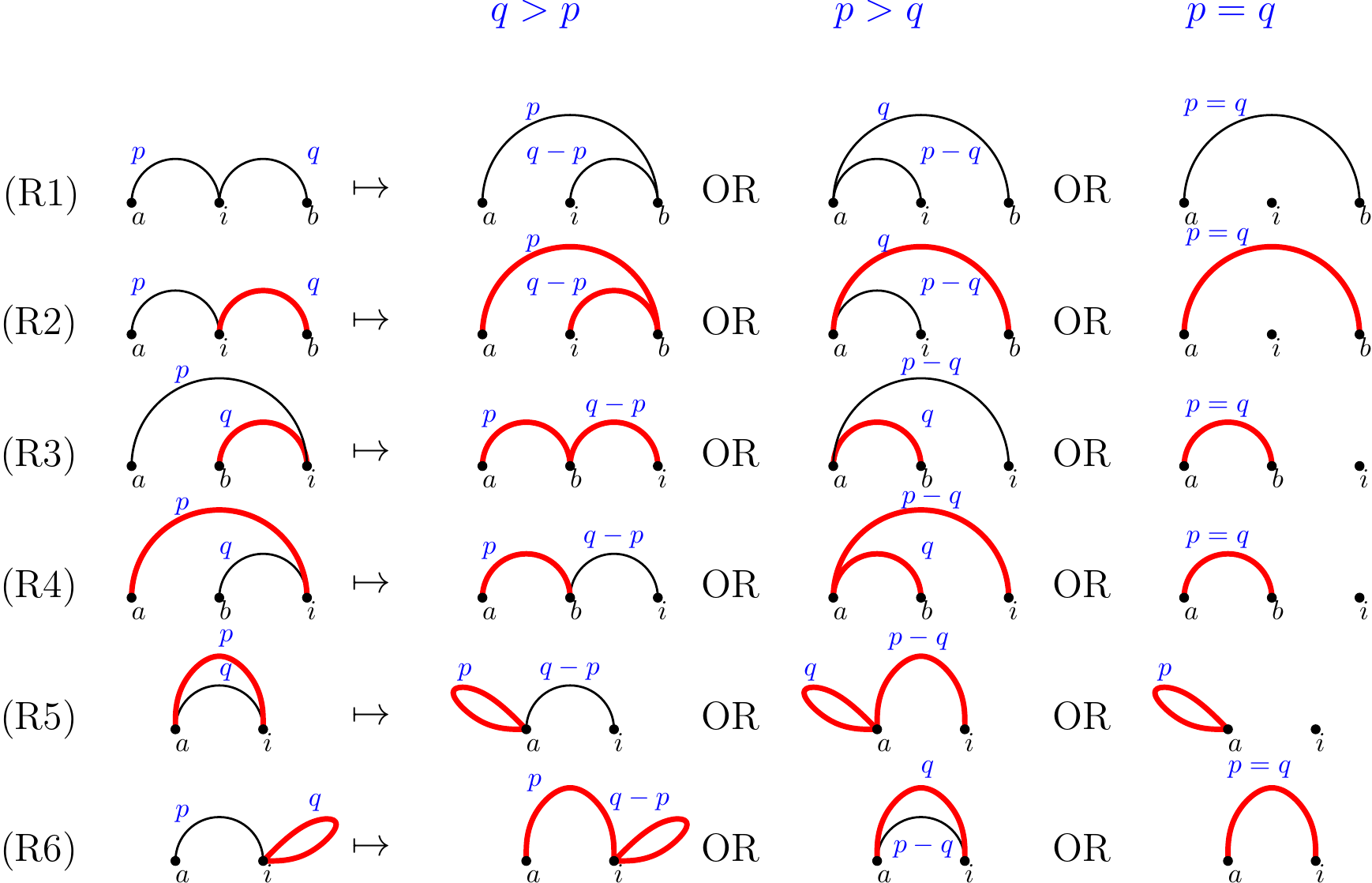}
\caption{Reduction rules from Equations (R1)-(R6). The original edges have flow $p$ and $q$. The outcomes have reassigned flows to preserve the original netflow on the vertices.}
 \label{fig:subdivrules}
\end{figure}

\section{Subdivision of  flow polytopes} \label{sec:flowssubdiv}

In this section we use the reduction rules for signed graphs given in Section \ref{sec:red}, following a specified order, to subdivide flow polytopes.  The main result of this section is the Subdivision Lemma as stated below, and again in Lemma \ref{lem:nct}. While the notation of this lemma seems complicated at first, the subsections below contain all the definitions and explanations necessary to understand it. This lemma is key in all our pursuits: it lies at the heart of the relationship between flow polytopes and Kostant partition functions. It also is a tool for systematic subdivisions, and as such calculating volumes of particular flow polytopes. 

\medskip

\noindent {\bf Subdivision Lemma.} {\it Let $G$ be a connected signed graph on the  vertex set  $[n+1]$ and $\mathcal{F}_G({\bf a})$ be its flow polytope for ${\bf a} \in \mathbb{Z}^{n+1}$. If for a fixed $i$ in $[n+1]$ $a_i=0$ and $G$ has no loops incident to vertex $i$, then the flow polytope subdivides as:
 \begin{equation}\label{lem:ncsub}
\mathcal{F}_G({\bf a}) = \bigcup_{T\in \mathcal{T}^{\pm}_{\I_i,\O_i}(\O_i^+)}
\mathcal{F}_{G^{(i)}_T}(a_1,\ldots,a_{i-1},\widehat{a_i},a_{i+1},\ldots,a_n, a_{n+1}),
\end{equation}
where $G_T^{(i)}$ are graphs on the vertex set $[n+1]\backslash \{i\}$  as defined in Section \ref{subsec:G_T}; and $\mathcal{T}^{\pm}_{\I_i,\O_i}(\O_i^+)$ is the set of signed trees as defined in Section \ref{nct}.}

First we define the trees, or equivalently compositions, that are important for the subdivision (Sections \ref{nct} and \ref{subsec:G_T}), then we define the order of application of reduction rules  and restate and prove the Subdivision Lemma (Section \ref{redorder}). In the next section we use this lemma to compute volumes of flow polytopes for both signless graphs $H$ and  signed graphs $G$.

\subsection{Noncrossing trees}\label{nct}

The subdivisions mentioned above are encoded by bipartite trees with negative and positive edges that are noncrossing. We start by defining such trees.

\begin{figure} 
\[
\begin{array}{lcr} 
\begin{tikzpicture}[scale=0.7] 
\node (l0) at (0,0.5) {};
\node (l1) at (0,0) {};
\node (l2) at (0,-0.5) {};
\node (l3) at (0,-1) {};
\node (r0) at (2,1) {};
\node (r1) at (2,0.5) {};
\node (r2) at (2,0) {};
\node (r3) at (2,-0.5) {};
\node (r4) at (2,-1) {};
\node (r5) at (2,-1.5) {};
\draw[thick] [-] (l0) to (r0);
\draw[thick] [-] (l1) to (r0);
\draw[thick] [-] (l1) to (r1);
\draw[thick] [-] (l1) to (r2);
\draw[thick] [-] (l2) to (r2);
\draw[thick] [-] (l2) to (r3);
\draw[thick] [-] (l3) to (r3);
\draw[thick] [-] (l3) to (r4);
\end{tikzpicture}
& 
\begin{tikzpicture}[scale=0.7] 
\node (l0) at (0,0.5) {};
\node (l1) at (0,0) {};
\node (l2) at (0,-0.5) {};
\node (l3) at (0,-1) {};
\node (r0) at (2,1) {};
\node (r1) at (2,0.5) {};
\node (r2) at (2,0) {};
\node (r3) at (2,-0.5) {};
\node (r4) at (2,-1) {};
\node (r5) at (2,-1.5) {};
\draw[very thick,red] [-] (l0) to (r0);
\draw[very thick,red] [-] (l1) to (r0);
\draw[thick] [-] (l1) to (r1);
\draw[thick] [-] (l1) to (r2);
\draw[thick] [-] (l2) to (r2);
\draw[thick] [-] (l2) to (r3);
\draw[thick] [-] (l3) to (r3);
\draw[red,very thick] [-] (l3) to (r4);
\end{tikzpicture}
&
\begin{tikzpicture}[scale=0.7] 
\node (l0) at (0,0.5) {};
\node (l1) at (0,0) {};
\node (l2) at (0,-0.5) {};
\node (l3) at (0,-1) {};
\node (r0) at (2,1) {};
\node (r1) at (2,0.5) {};
\node (r2) at (2,0) {};
\node (r3) at (2,-0.5) {};
\node (r4) at (2,-1) {};
\node (r5) at (2,-1.5) {};
\draw[red,very thick] [-] (l0) to (r0);
\draw[red,very thick] [-] (l1) to (r0);
\draw[thick] [-] (l1) to (r1);
\draw[very thick,red] [-] (l1) to (r2);
\draw[very thick,red] [-] (l2) to (r2);
\draw[thick] [-] (l2) to (r3);
\draw[thick] [-] (l3) to (r3);
\draw[red,very thick] [-] (l3) to (r4);
\end{tikzpicture}
\\
\text{(a)}\, T_1\in \mathcal{T}^-_{L,R}\phantom{xxxxxx} &\text{(b)}\, T_2\in \mathcal{T}^{\pm}_{L,R}((1,5))\phantom{xxxxxx} &\text{(c)}\, T_3 \in \mathcal{T}^{\pm}_{L,R}((1,3,5))
\end{array}
\]
\caption{Examples of bipartite noncrossing trees that are: (a) negative (composition $(1,0,1,1,0)$), (b) signed with $R^+=(1,5)$ (composition $(1^+,0^-,1^-,1^-,0^+)$), (c) signed with $R^+=(1,3,5)$ (composition $(1^+,0^-,1^+,1^-,0^+)$).}
\label{nctrees}
\end{figure}
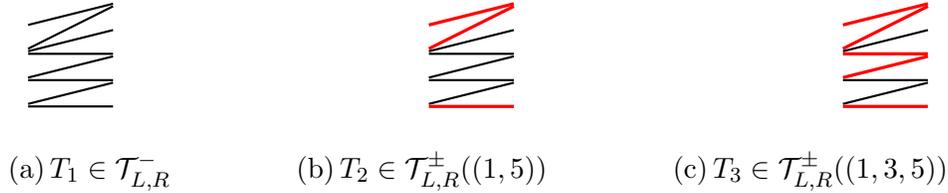

A {\bf negative bipartite noncrossing tree} $T$ with left vertices $x_1,\ldots,x_{\ell}$ and right vertices $x_{\ell+1},\ldots, x_{\ell+r}$ is a bipartite tree of negative edges that has no pair of edges $(x_p,x_{\ell+q},-), (x_t,x_{\ell+u},-)$ where $p<t$ and $q>u$. If $L$ and $R$ are the ordered sets $(x_1,\ldots,x_{\ell})$ and $(x_{\ell+1},\ldots,x_{\ell+r})$, let $\mathcal{T}^-_{L,R}$ be the set of such noncrossing bipartite trees. Note that $\# \mathcal{T}^-_{L,R}=\binom{\ell+r-2}{\ell-1}$, since they are in bijection with weak compositions of $\ell-1$ into $r$ parts. Namely, a tree $T$ corresponds to the composition of $(\text{indegrees $-1$})$ of the right vertices: $(b_1,\ldots,b_r)$, where $b_i$ denotes the number of edges incident to $x_{\ell+i}$ in $T$ minus $1$. See Figure \ref{nctrees}~(a) for an example of such a tree.

A {\bf signed bipartite noncrossing tree} is a bipartite noncrossing tree $T$ with negative $(\cdot,\cdot,-)$ and positive $(\cdot,\cdot,+)$ edges such that any right vertex is either incident to only negative edges or only positive edges. Let $\mathcal{T}^{\pm}_{L,R}(R^+)$ be the set of signed bipartite noncrossing trees with ordered left vertex set $L=(x_1,\ldots,x_{\ell})$, ordered right vertex set $R=(x_{\ell+1},\ldots,x_{\ell+r})$, and $R^+$ denoting the ordered set of right vertices  incident to only positive edges (the ordering of $R^+$ is inherited from the ordering of $R$). Note that for fixed $R^+$, $\# \mathcal{T}^{\pm}_{L,R}(R^+)=\#\mathcal{T}^-_{L,R}$, and we can encode such trees with a signed composition $(b_1^{\pm},b_2^{\pm},\ldots,b_r^{\pm})$ indicating whether the incoming edges to each right vertex are all positive or all negative and where $b_i$ denotes the number of edges incident to $x_{\ell+i}$ in $T$ minus $1$. See Figure \ref{nctrees}~(b)-(c) for examples of such trees. 

If either $L$ or $R$ is empty, the set $\mathcal{T}^{\pm}_{L,R} (R^+)$ consists of one element: the empty tree.

By abuse of notation we sometimes write $\mathcal{T}^{\pm}_{L,R}$, where $L$ and $R$ are sets as opposed to ordered sets. In these cases we assume that an order will be imposed on these sets. 




\subsection{Removing vertex $i$ from a signed graph $G$} \label{subsec:G_T} One of the points of the Subdivision Lemma is to start by a graph $G$ on the vertex set $[n+1]$ and to subdivide the flow polytope of $G$ into flow polytopes of graphs on a vertex set smaller than $[n+1]$. In this section we show the mechanics of this. We take a signed graph $G$ and replace incoming and outgoing edges of a fixed vertex $i$ by edges that avoid $i$ and come from a noncrossing tree $T$.
The outcome is a graph we denote by $G_T^{(i)}$ on the vertex set $[n+1]\backslash \{i\}$. To define this precisely we first introduce some notation. 

Given a signed graph $G$ and one of its vertices $i$, let $\mathcal{I}_i=\mathcal{I}_i(G)$ be the multiset of \textbf{incoming edges} to $i$, which are defined as negative edges of the form $(\cdot,i,-)$. Let $\mathcal{O}_i=\mathcal{O}_i(G)$ be the multiset of \textbf{outgoing edges} from $i$, which are defined as  edges of the form $(\cdot,i, +)$ and $(i,\cdot,\pm)$. Finally,  let $\mathcal{O}_i^{\pm}$ be the signed refinement of $\mathcal{O}_i$. Define $indeg_G(i):=\#\mathcal{I}_i(G)$ to be  the \textbf{indegree} of vertex $i$ in $G$. 

Assign an ordering to the sets $\mathcal{I}_i$ and $\mathcal{O}_i$ and consider a tree $T \in \mathcal{T}^{\pm}_{\mathcal{I}_i,\mathcal{O}_i}(\mathcal{O}_i^+)$. For each tree-edge $(e_1,e_2)$ of $T$ where $e_1=(r,i,-) \in \mathcal{I}_i$ and $e_2\in \mathcal{O}_i$ ($e_2=(i,s,\pm)$ or $(t,i,+)$), let $edge(e_1,e_2)$ be the following signed edge:
\begin{equation}\label{treeEdge}
edge(e_1,e_2)=\begin{cases}
(r,s,\pm)  & \text{ if } e_2=(i,s,\pm),\\
(r,t,+) & \text{ if } e_2=(t,i,+) \text{ and } r\leq t,\\
(t,r,+) & \text{ if } e_2=(t,i,+) \text{ and } r>t.
\end{cases}
\end{equation}
Note that if $e_1=(r,i,-)$ and $e_2=(r,i,+)$, then we allow $edge(e_1,e_2)$ to  be the loop $(r,r,+)$. Note also that $edge(e_1,e_2)$ is the edge corresponding to the type $C_{n+1}$ root $\vv(e_1)+\vv(e_2)$ where $\vv(e_1)$ and $\vv(e_2)$ are the positive type $C_{n+1}$ associated with $e_1$ and $e_2$. 

The graph $G^{(i)}_T$ is then defined as the graph obtained from $G$ by removing the vertex $i$ and all the edges of $G$ incident to $i$ and adding  the multiset of edges $\{\{edge(e_1,e_2) ~|~ (e_1,e_2)\in E(T)\}\}$. See Figure \ref{G_T} for examples of $G^{(i)}_T$.

\begin{remark} \label{compencoding}
If $T$ is given by a weak composition of $\#\mathcal{I}_i-1$ into $\#\mathcal{O}_i$ parts, say $(b_e)_{e\in \mathcal{O}_i}$, then:
\begin{compactitem}
\item[(i)] we record this composition by labeling the edges $e$ in $\mathcal{O}_i$ of $G$ with the corresponding part $b_e$. We can view this labeling as assigning a flow $b(e)=b_e$ to edges $e$ of $G$ in $\O_i$.  

\item[(ii)] The $b_e+1$ edges $(\cdot,e)$  in $T$ coming from the part $b_e$ of the composition will correspond to $b_e+1$ edges $edge(\cdot,e)$ in $G_T^{(i)}$. We think of these $b_e+1$ edges as one edge $e'$ coming from the original edge  $e$ in $G$, and $b_e$ {\bf truly new edges}.   
\end{compactitem}
\end{remark}

\begin{figure}
\centering
\subfigure[]{
\includegraphics[height=4.6cm]{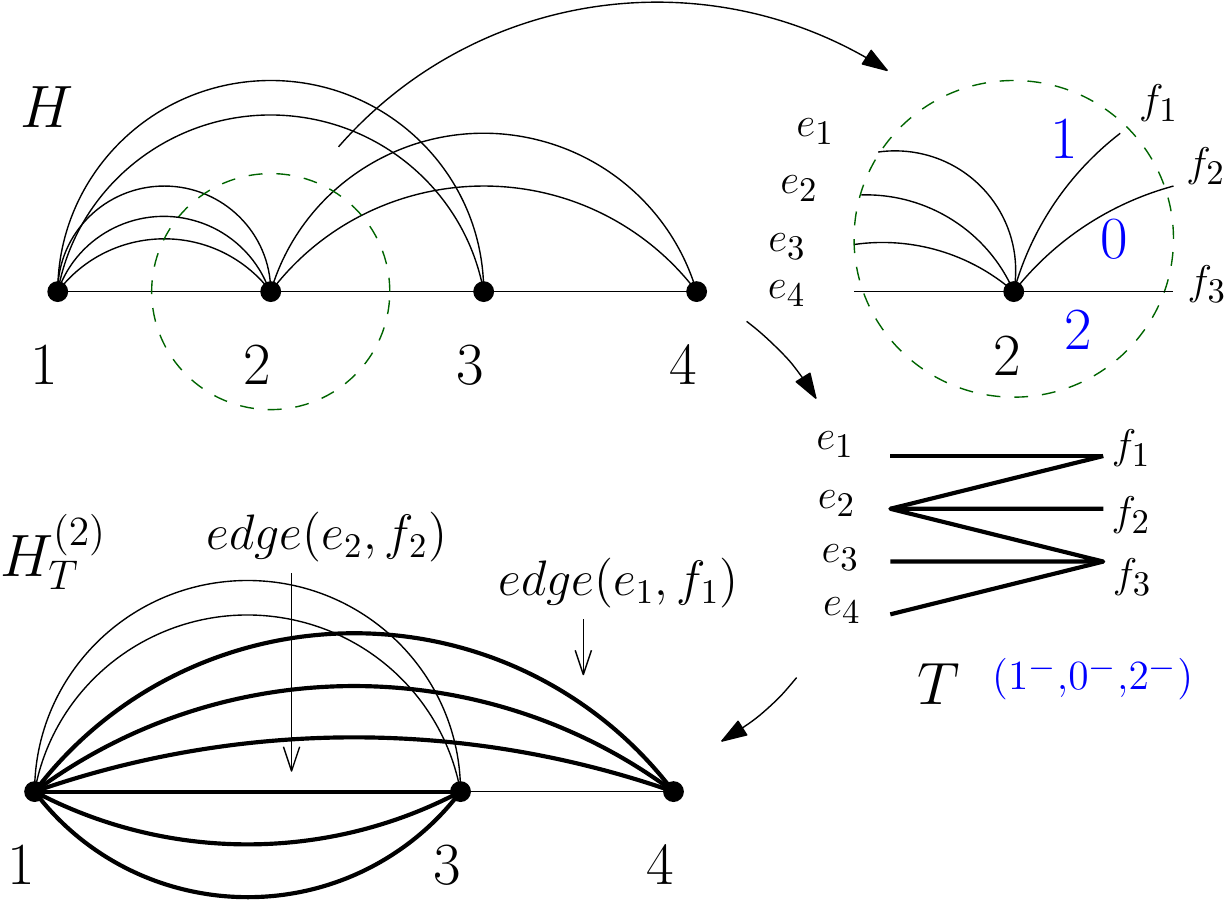}
\label{ttA}
}
\qquad
\subfigure[]{
\includegraphics[height=4.6cm]{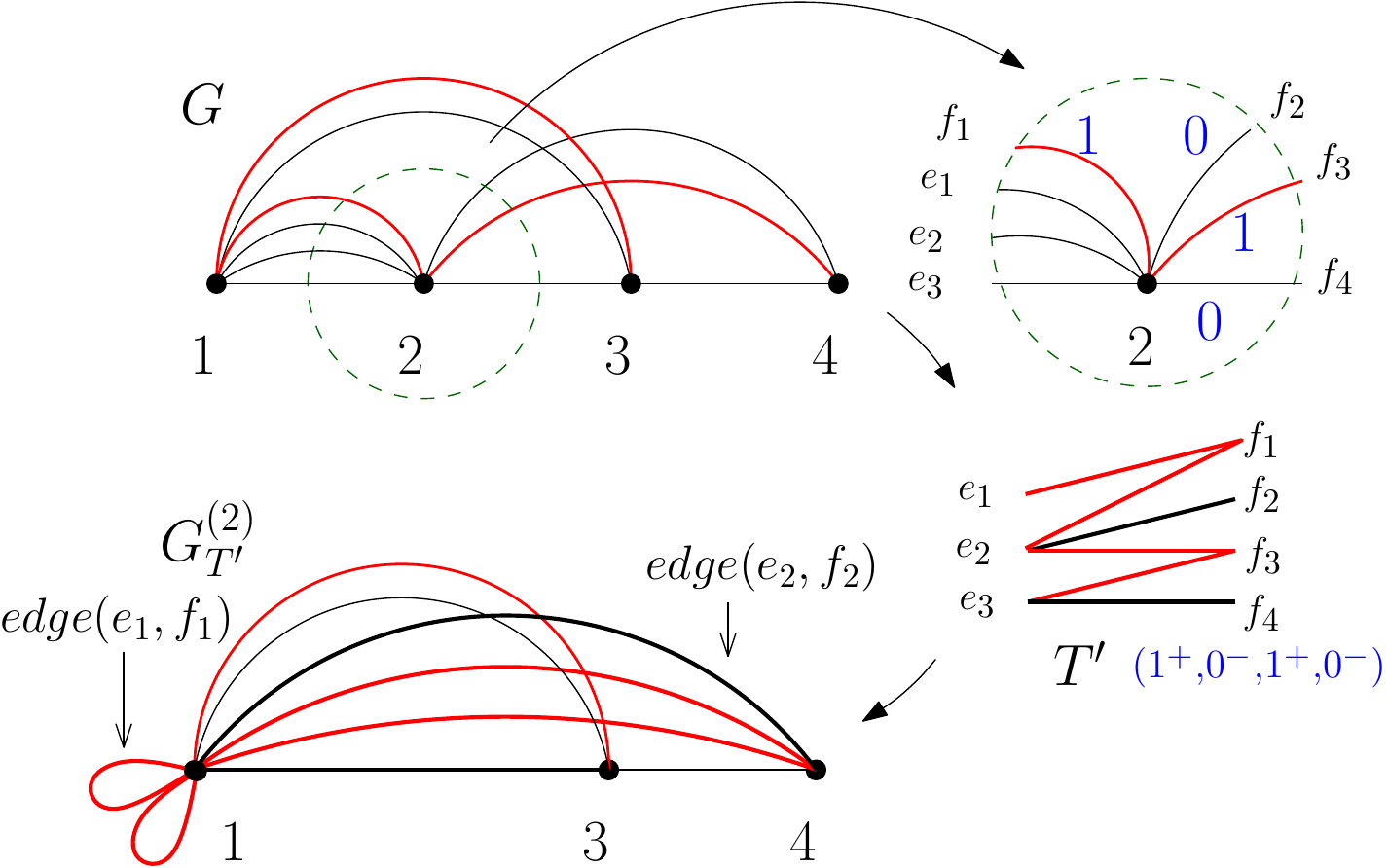}
\label{ttD}
}
\caption{Replacing the incident edges of vertex $2$ in (a) a graph $H$, with only negative edges, by a noncrossing tree $T$ encoded by the composition $(1^-,0^-,2^-)$ of $3=indeg_H(2)-1$. (b) a signed graph $G$ by a signed noncrossing tree $T'$ encoded by the composition $(1^+,0^-,1^+,0^-)$ of $2=indeg_G(2)-1$.} 
\label{G_T}
\end{figure}

The following is an easy consequence of the construction of $G_T^{(i)}$. (See the incoming and outgoing edges in $H$ and $H^{(2)}_{T}$; and $G$ and $G^{(2)}_{T'}$ in Figure~\ref{ttA},\ref{ttD}.)

\begin{proposition} \label{propG_T} Given a graph $G$ on the vertex set $[n+1]$, and $i, j \in [n+1]$, $i<j$, then the numbers of incoming and outgoing edges of vertex $j$ of the graph $G_T^{(i)}$ on the vertex set $[n+1]\backslash \{i\}$ built above are:
\begin{align} 
\#\mathcal{I}_j(G^{(i)}_T)&= 
\#\mathcal{I}_j(G) + \#\{\{\text{truly new edges } (k,j,-)~|~ k<i<j \}\}, \label{indeg} \\ 
\#\mathcal{O}_j(G^{(i)}_T) &= 
\#\mathcal{O}_j(G) + \#\{\{\text{truly new edges } (k,j,+) ~|~ k\neq i \}\}. \label{outdeg}
\end{align}
\end{proposition}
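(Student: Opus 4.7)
The plan is to track, for each edge incident to $j$ in $G_T^{(i)}$, whether it is an ``old'' edge inherited from $G$ (and not touching $i$) or a ``new'' edge $edge(e_1,e_2)$ produced by a tree-edge of $T$. Since $i<j$ by hypothesis, the only edges of $G$ incident to both $i$ and $j$ are the $m_{ij}^-$ copies of $(i,j,-)$ in $\mathcal{I}_j(G)$ and the $m_{ij}^+$ copies of $(i,j,+)$ in $\mathcal{O}_j(G)$; both kinds sit in $\mathcal{O}_i$. Deleting vertex $i$ therefore removes exactly $m_{ij}^-$ elements from $\mathcal{I}_j$ and $m_{ij}^+$ elements from $\mathcal{O}_j$, while leaving all other incidences at $j$ untouched.

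Next I would run a short case analysis on definition \eqref{treeEdge}. Writing $e_1=(r,i,-)\in\mathcal{I}_i$ so that $r<i<j$, the three cases are \emph{(a)} $e_2=(i,s,-)$ giving the new edge $(r,s,-)$, \emph{(b)} $e_2=(i,s,+)$ giving $(r,s,+)$, and \emph{(c)} $e_2=(t,i,+)$ giving a positive edge on $\{r,t\}$ with $t<i$. Since $r$ and $t$ are strictly smaller than $i$, hence strictly smaller than $j$, a new edge is incident to $j$ only when $s=j$, which happens only in cases \emph{(a)} and \emph{(b)}. The resulting new edges are $(r,j,-)\in\mathcal{I}_j(G_T^{(i)})$ and $(r,j,+)\in\mathcal{O}_j(G_T^{(i)})$ respectively, and in both $r<i$, so automatically $r\neq i$.

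By Remark~\ref{compencoding}, each $e\in\mathcal{O}_i$ produces $b_e+1$ new edges, one declared to be the image of $e$ and the remaining $b_e$ truly new. Grouping the new edges incident to $j$ by the responsible $e_2$ and combining with the count of surviving old edges yields
\[
\#\mathcal{I}_j(G_T^{(i)})=\bigl(\#\mathcal{I}_j(G)-m_{ij}^-\bigr)+\sum_{e=(i,j,-)\in\mathcal{O}_i}(b_e+1)=\#\mathcal{I}_j(G)+\sum_{e=(i,j,-)\in\mathcal{O}_i}b_e,
\]
and analogously
\[
\#\mathcal{O}_j(G_T^{(i)})=\bigl(\#\mathcal{O}_j(G)-m_{ij}^+\bigr)+\sum_{e=(i,j,+)\in\mathcal{O}_i}(b_e+1)=\#\mathcal{O}_j(G)+\sum_{e=(i,j,+)\in\mathcal{O}_i}b_e.
\]
A second appeal to Remark~\ref{compencoding} identifies the remaining sums as the counts of truly new edges of the forms $(k,j,-)$ with $k<i<j$ and $(k,j,+)$ with $k\neq i$, giving \eqref{indeg} and \eqref{outdeg}.

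The argument is essentially bookkeeping; the only real content lies in the case analysis above. The point I expect to watch is that the assumption $i<j$ is exactly what forces the ``left'' indices $r$ (and the index $t$ in case \emph{(c)}) to lie below $j$, and also what prevents any $e_1=(j,i,-)$ from appearing in $\mathcal{I}_i$. Without $i<j$ there would be further contributions to $\mathcal{O}_j$ (e.g.\ from $e_1=(j,i,-)\in\mathcal{I}_i$ or from cases \emph{(a)} or \emph{(c)} with an endpoint equal to $j$), and the clean ``replacement plus truly new'' split of Remark~\ref{compencoding} would no longer line up with the claimed equalities.
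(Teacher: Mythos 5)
Your argument is correct, and it is exactly the bookkeeping the paper has in mind: the paper states Proposition \ref{propG_T} as an easy consequence of the construction of $G_T^{(i)}$, and your case analysis of \eqref{treeEdge} (using $i<j$ to force the new edges touching $j$ to be $(r,j,\pm)$ with $r<i$) together with the $b_e+1$ count from Remark \ref{compencoding} makes that explicit, with the $+1$'s cancelling the deleted copies of $(i,j,\pm)$ just as needed.
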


A statement of similar flavor as Proposition \ref{propG_T} can be made for $i>j$, but we omit it as we do not need it for our proofs.  

\begin{example}
For the graph $H$, with only negative edges, in Figure~\ref{ttA}: $\#\mathcal{I}_3(H^{(2)}_T)=\#\mathcal{I}_3(H)+2=5$, $\#\mathcal{I}_4(H^{(2)}_T)=\#\mathcal{I}_4(H)+1=4$ and $\#\mathcal{O}_3(H^{(2)}_T)=\#\mathcal{O}_3(H)=1$. 

For the signed graph $G$ in Figure~\ref{ttD}: $\#\mathcal{I}_3(G^{(2)}_{T'})=\#\mathcal{I}_3(G)+0=2$, $\#\mathcal{I}_4(G^{(2)}_{T'})=\#\mathcal{I}_4(G)+0=2$ and  $\#\mathcal{O}_3(G^{(2)}_{T'})=\#\mathcal{O}_3(G)=2$, and $\#\mathcal{O}_4(G^{(2)}_{T'})=\#\mathcal{O}_4(G)+1=2$.
\end{example}

Next, we give a subdivision of the flow polytope $\F_{G}$ of a signed graph $G$ in terms of flow polytopes $\F_{G_T^{(i)}}$ of graphs $G_T^{(i)}$. 

\subsection{Subdivision Lemma} \label{redorder}

In this subsection we are ready to state again the Subdivision Lemma, now with all the terminology defined, and prove it. We want to subdivide the flow polytope of a graph $G$ on the vertex set $[n+1]$. To do this we apply the reduction rules to incoming and outgoing edges of a vertex $i$ in $G$ with zero flow. Then by repeated application of reductions to this vertex, we can essentially delete this vertex from the resulting graphs, and as a result get to graphs on the vertex set $[n+1]\backslash \{i\}$. The Subdivision Lemma tells us exactly what these graphs, with a smaller vertex set, are. 

We have to specify in which order we do the reduction  at a given vertex $i$, since  at any given stage there might be several choices of pairs of edges to reduce. First we fix a linear order $\theta_\I$ on the multiset $\mathcal{I}_i(G)$ of incoming edges to vertex $i$, and a linear order $\theta_\O$ on the multiset $\mathcal{O}_i(G)$ of outgoing edges from vertex $i$. Recall that $\O_i(G)$ also includes edges $(a,i,+)$ where $a<i$. We choose the pair of edges to reduce in the following way: we pick the first available edge from $\I_i(G)$ and from $\O_i(G)$ according to the orders $\theta_{\I}$ and $\theta_{\O}$. At each step of the reduction, one outcome will have one fewer incoming edge and the other outcome will have one fewer outgoing edge. In each outcome, when we choose the next pair of edges to reduce we pick the next edge from $\I_i(G)$ and from $\O_i(G)$ that is still available. Since we only deal with the edges incident to vertex $i$,  for clarity we carry out the reductions on a graph $B$ representing these edges ordered by $\theta_{\I}$ and $\theta_\O$;  see Figure~\ref{ordervert}. The graph $B$ has left vertices $L$, a middle vertex $i$, and right vertices $R$; and edges $\{(e_t,i,-) \mid e_t\in L\}\cup \{(i,f_t,\pm) \mid f_t \in R\}$ where $\pm$ depends on the sign of $f_t$.


The Subdivision Lemma shows that when we follow this order to apply reductions to a vertex with zero flow the  outcomes are encoded by signed bipartite noncrossing trees.


\begin{figure}
\centering
\includegraphics[width=14cm]{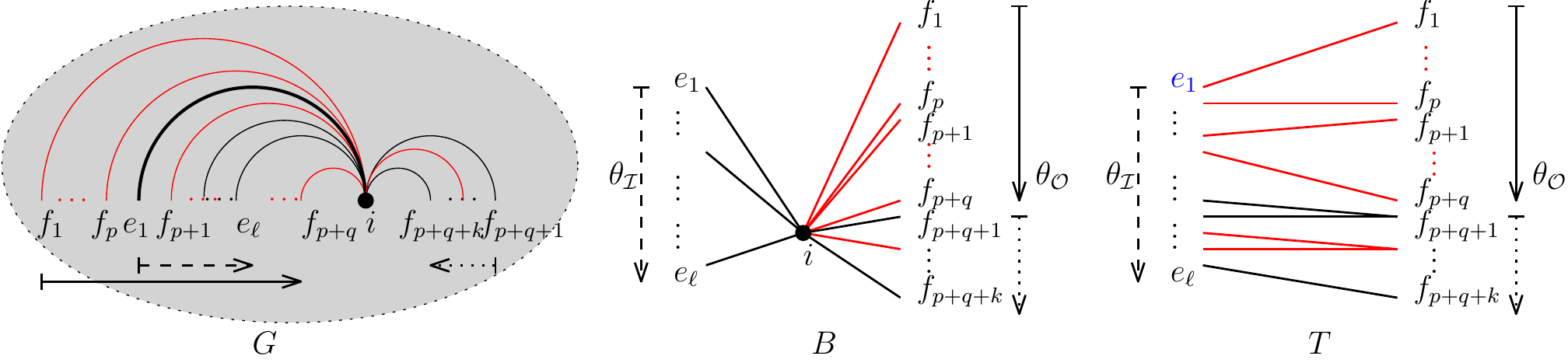}
\caption{Setting of Lemma~\ref{lem:nct} for edges incident to vertex $i$. We fix total orders $\theta_\I$ and $\theta_\O$ on $\I_i(G)$ (represented by dashed arrow) and $\O_i(G)$ (represented by straight and dotted arrows) respectively. The resulting bipartite trees are in $\mathcal{T}^{\pm}_{L,R}(R^+)$ where $L=\theta_\I(\I)$, $R=\theta_\O(\O)$ and $R^+=\theta_\O(\O^+)$.}
\label{ordervert}
\end{figure}

\begin{lemma}[Subdivision Lemma] \label{lem:nct}
Let $G$ be a connected signed graph on the vertex set $[n+1]$ and $\mathcal{F}_G({\bf a})$ be its flow polytope for ${\bf a} \in \mathbb{Z}^{n+1}$. Assume that for a fixed $i$ in $[n+1]$,  $a_i=0$ and $G$ has no loops incident to vertex $i$. Fix linear orders $\theta_\I$ and $\theta_\O$ on $\I_i(G)$ and $\O_i(G)$ respectively. If we apply the reduction rules to edges incident to vertex $i$ following the linear orders, then the flow polytope subdivides as:
 \begin{equation}\label{lem:ncsub}
\mathcal{F}_G({\bf a}) = \bigcup_{T\in \mathcal{T}^{\pm}_{L,R}(R^+)} \mathcal{F}_{G^{(i)}_T}(a_1,\ldots,a_{i-1},\widehat{a_i},a_{i+1},\ldots,a_{n+1}),
\end{equation}
where $G_T^{(i)}$ is as defined in Section \ref{subsec:G_T}; and $\mathcal{T}^{\pm}_{L,R}(R^+)$ is the set of signed trees with $L=\theta_\I(\I_i), R=\theta_\O(\O_i)$ and $R^+=\theta_\O(\O^+_i)$. 
\end{lemma}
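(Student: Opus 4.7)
Plan: I would prove the lemma by induction on the total degree $N_i := \#\I_i(G) + \#\O_i(G)$ of vertex $i$ in $G$. For the base case, when either $\I_i(G)$ or $\O_i(G)$ is empty, the hypotheses $a_i = 0$ and the absence of loops at $i$ force every edge incident to $i$ to carry zero flow in any $\aa$-flow. Thus $\F_G(\aa)$ is canonically identified with the flow polytope of the graph obtained by deleting vertex $i$ together with its incident edges, and since $\mathcal{T}^{\pm}_{L,R}(R^+)$ consists of the single empty tree in this situation, this matches the right-hand side of~\eqref{lem:ncsub}.

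For the inductive step, let $l_1$ and $r_1$ be the first elements of $\I_i(G)$ and $\O_i(G)$ under $\theta_\I$ and $\theta_\O$. Since there is no loop at $i$, the pair $(l_1, r_1)$ is admissible for exactly one of the reduction rules (R1)--(R5); let $G_1, G_2, G_3$ be the resulting graphs. Proposition~\ref{red} gives $\F_G(\aa) = \F_{G_1}(\aa) \cup \F_{G_2}(\aa)$. Note that $G_1$ satisfies $\I_i(G_1) = \I_i(G) \setminus \{l_1\}$ with $\O_i$ unchanged, $G_2$ satisfies $\O_i(G_2) = \O_i(G) \setminus \{r_1\}$ with $\I_i$ unchanged, and in either case the newly inserted edge is not incident to $i$. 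Both graphs have strictly smaller $N_i$, so the inductive hypothesis applied to $G_1$ and $G_2$ yields
$$
\F_{G_1}(\aa) = \bigcup_{T_1 \in \mathcal{T}^{\pm}_{L \setminus \{l_1\},\, R}(R^+)} \F_{(G_1)^{(i)}_{T_1}}(\aa_{\hat\imath}), \qquad \F_{G_2}(\aa) = \bigcup_{T_2 \in \mathcal{T}^{\pm}_{L,\, R \setminus \{r_1\}}(R^+ \setminus \{r_1\})} \F_{(G_2)^{(i)}_{T_2}}(\aa_{\hat\imath}).
$$

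The core combinatorial step is to identify these two families with a cover of $\mathcal{T}^{\pm}_{L,R}(R^+)$. Two facts suffice: every $T \in \mathcal{T}^{\pm}_{L,R}(R^+)$ contains the edge $(l_1, r_1)$, and at least one of $l_1, r_1$ is a leaf of $T$. For the first, if $(l_1, r_1) \notin T$ then any edge $(l_1, r_j)$ with $j > 1$ would cross any edge $(l_k, r_1)$ with $k > 1$, forcing $r_1$ (which cannot be isolated) to be adjacent only to $l_1$, a contradiction. For the second, if $l_1$ had a second neighbor $r_j$ ($j > 1$) and $r_1$ had a second neighbor $l_k$ ($k > 1$), the two edges would cross. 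Consequently the maps $T \mapsto T \setminus \{l_1\}$ and $T \mapsto T \setminus \{r_1\}$ biject trees with $l_1$ (resp.\ $r_1$) a leaf onto $\mathcal{T}^{\pm}_{L \setminus \{l_1\}, R}(R^+)$ and $\mathcal{T}^{\pm}_{L, R \setminus \{r_1\}}(R^+ \setminus \{r_1\})$, and their union covers all of $\mathcal{T}^{\pm}_{L,R}(R^+)$.

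To close the induction I would verify $(G_1)^{(i)}_{T_1} = G^{(i)}_T$ for $T = T_1 \cup \{(l_1, r_1)\}$, and analogously for $G_2$. The single reduction at $(l_1, r_1)$ inserts the edge $edge(l_1, r_1)$ from~\eqref{treeEdge}, which is not incident to $i$ and carries sign $+$ precisely when $r_1 \in R^+$; this is exactly the extra tree edge of $T$ relative to $T_1$ (or $T_2$) under the recipe in Section~\ref{subsec:G_T}. The main obstacle is checking this identification uniformly over the several applicable reduction rules, as the shape of $edge(l_1, r_1)$ depends on the combination of signs and orientations of $l_1$ and $r_1$; however, each case reduces to matching~\eqref{treeEdge} against the definition of the appropriate reduction rule, which is mechanical once the correspondence between a single reduction step and the tree edge $(l_1, r_1)$ is fixed.
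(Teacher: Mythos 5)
Your proposal is correct and takes essentially the same route as the paper's proof: induction on $\#\I_i(G)+\#\O_i(G)$, a single reduction on the first available pair of edges under $\theta_\I,\theta_\O$ using Proposition~\ref{red}, the inductive hypothesis applied to the two outcomes, and the identification $(G_1)^{(i)}_{T_1}=G^{(i)}_{T_1\cup\{(l_1,r_1)\}}$ via the edge formula~\eqref{treeEdge}. Your splitting of $\mathcal{T}^{\pm}_{L,R}(R^+)$ according to which of $l_1,r_1$ is a leaf coincides with the paper's split by $b_{f_1}=0$ versus $b_{f_1}>0$, and your crossing arguments simply make explicit the facts the paper asserts.
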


\begin{proof} If  either $\#\I_i(G)$ or  $\#\O_i(G)$ equals $0$, then the edges incident to $i$ are all forced to have flow $0$ and there is nothing to prove. In this case we call the graph obtained by deleting vertex $i$ and  the edges incident to it from $G$ the {\em final outcome} of the reduction process on the graph $G$. If $\#\I_i(G)$ and  $\#\O_i(G)$ are at least $1$, 
apply the reduction rules (see Figure~\ref{fig:subdivrules} for the rules) to pairs of edges incident to $i$ following the orders $\theta_\I$ and $\theta_\O$.  Each step of the reduction takes a graph $G$ and gives two graphs $G_1$ and $G_2$ as defined by the reduction rules (R1)-(R6) given in Section \ref{sec:red}. Note that $G_1$ and $G_2$ differ from $G$ in exactly two edges: we deleted one edge incident to $i$ from $G$ and added an edge not incident to $i$;  the new edge will not take part in any other reduction on vertex $i$. We continue the reduction until we obtain a graph $\widetilde{G}$ with $\#\I_i(\widetilde{G})$ or $\#\O_i(\widetilde{G})$ equaling $1$.  Note that once we obtain such a  graph $\widetilde{G}$, then one edge incident to vertex $i$ will have a forced value for its flow, and we can replace  the graph $\widetilde{G}$ on the vertex set $[n+1]$ with a graph $\widehat{G}$ on the vertex set $[n+1]\backslash \{i\}$, whose flow polytope is equivalent to that of  $\widetilde{G}$. 
  We also call such graphs $\widehat{G}$ the {\em final outcomes} of the reduction process on the graph $G$.  See Figure \ref{final-outcomes} for an illustration of how to get from $\widetilde{G}$  to $\widehat{G}$. A graph $\widehat{G}$ is considered a {\bf final outcome} for $G$ with respect to the orders $\theta_\I$ and $\theta_\O$ if it is obtained in one of the two ways described above. 


 We show by induction on $c_G:=\#\mathcal{I}_i(G) + \#\mathcal{O}_i(G)$ that the  final outcomes of the reduction process on the graph $G$ with respect to the orders $\theta_\I$ and $\theta_\O$ as described above  are exactly the graphs $G_T^{(i)}$ for all noncrossing bipartite trees $T$ in $\mathcal{T}^{\pm}_{L,R}(R^+)$ where $L=\theta_\I(\I_i), R=\theta_\O(\O_i)$ and $R^+=\theta_\O(\O^+_i)$. Recall that such trees are in bijection with signed compositions $(b_e)_{e\in R}$ of $\#\I_i(G)-1$ into $\#\O_i(G)$ parts.

The base case, when $c_G=1$, is trivial by the above discussion.
Consider a graph $G$ with $c_G>1$. If either  $\#\I_i(G)$ or $\#\O_i(G)$ equals $0$ or $1$, then doing as described in the first paragraph of the proof  we are done. If both $\#\I_i(G)$ and $\#\O_i(G)$ are greater than  $1$, then
 using linear orders $\theta_\I$ and $\theta_\O$ we pick the next available pair of edges to reduce. The pair will be an incoming negative edge $e_1=(a,i,-)$ and an outgoing edge $f_1=(i,b,\pm)$ or $(b,i,+)$.
We do one of the reduction presented in Figure~\ref{fig:subdivrules} and obtain graphs $G'$ and $G''$ with a new edge $edge(e_1,f_1)=(a,b,\pm)$ and without $(i,b,\pm)$ or $(a,i,-)$ respectively (see Figure~\ref{inductivestep}). For both $G'$ and $G''$ we have $c_{G'}=c_{G''}=c_G-1$ ($\O_i(G')=\O_i(G)\backslash \{(i,b,\pm)\}$ and $\I_i(G'')=\I_i(G)\backslash \{(a,i,-)\}$). By induction, the final outcomes of the reduction on $G'$ are $G'^{(i)}_{T'}$ where $T'$ are noncrossing bipartite trees in $\mathcal{T}^{\pm}_{L,R\backslash \{f_1\}}$. But $T'\cup (a,b,\pm)$ is  still a noncrossing bipartite tree (since we follow the orders $\theta_{\I_i(G)}$ and $\theta_{\O_i(G)}$), $G'^{(i)}_{T'} = G^{(i)}_{T' \cup (a,b,\pm)}$ and the set $\{T'\cup (a,b,\pm) \mid T' \in \mathcal{T}^{\pm}_{L,R\backslash \{f_1\}}\}$ is exactly the set of trees in $\mathcal{T}^{\pm}_{L,R}(R^+)$ with $b_{f_1} = indeg(f_1)-1=0$ (see Figure~\ref{inductivestep}). Let $\mathcal{T}^{(b_{f_1}=0)}$ be the set of such trees. Similarly, by induction, the final outcomes of the reduction on $G''$ are the graphs $G^{(i)}_T$ for all trees $T$ in $\mathcal{T}^{\pm}_{L,R}(R^+)$ where $b_{f_1}=indeg(f_1)-1>0$. Let $\mathcal{T}^{(b_{f_1}>0)}$ be the set of such trees. Since $\mathcal{T}^{\pm}_{L,R}(R^+)=\mathcal{T}^{(b_{f_1}=0)} \cup \mathcal{T}^{(b_{f_1}>0)}$ where the union is disjoint, then from $G$ we obtain the final outcomes $G^{(i)}_T$ where $T\in \mathcal{T}^{\pm}_{L,R}(R^+)$.

\begin{figure}
\subfigure[]{
\includegraphics[width=4.5cm]{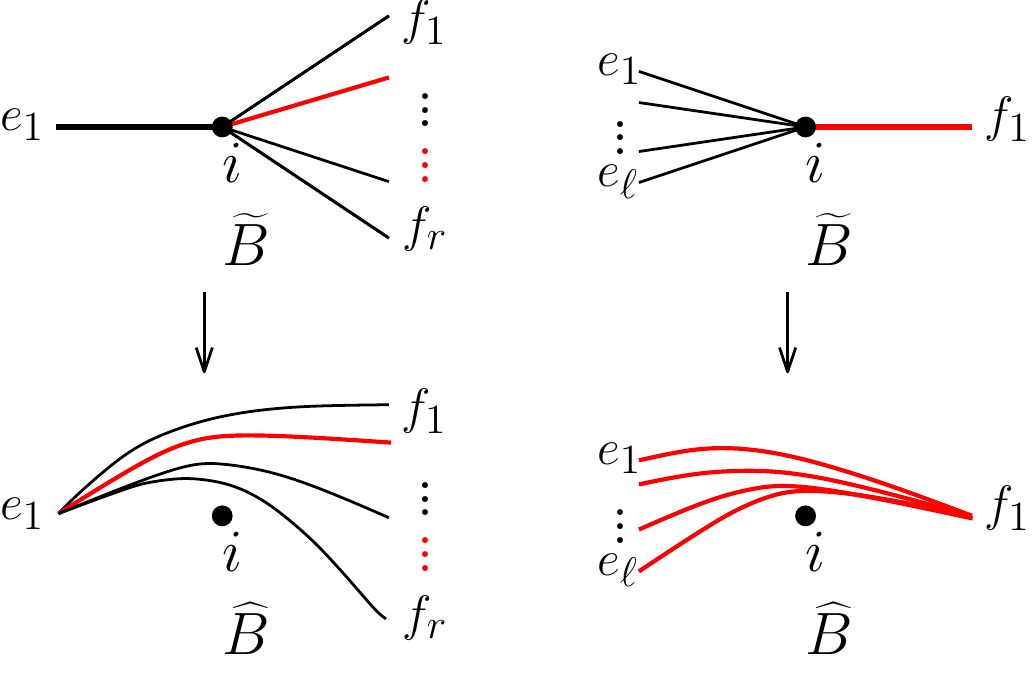}
\label{final-outcomes}
}
\subfigure[]{
\includegraphics[width=4.5cm]{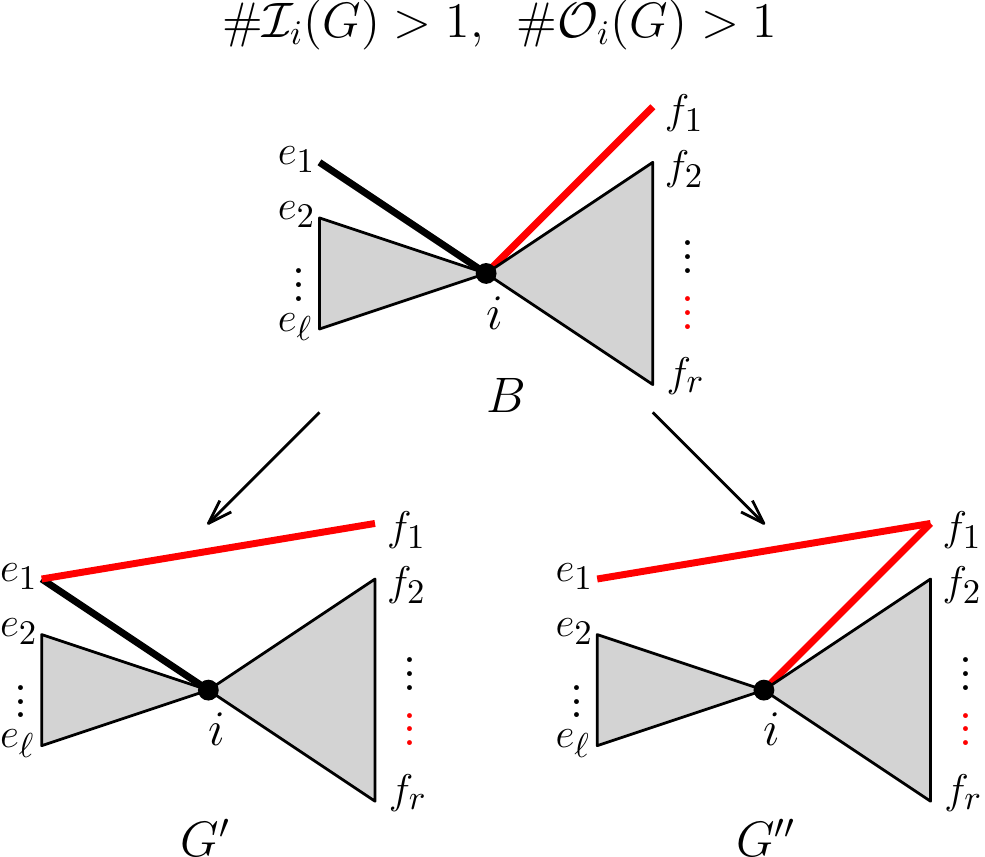}
\label{inductivestep}
}
\subfigure[]{
\includegraphics[width=6.5cm]{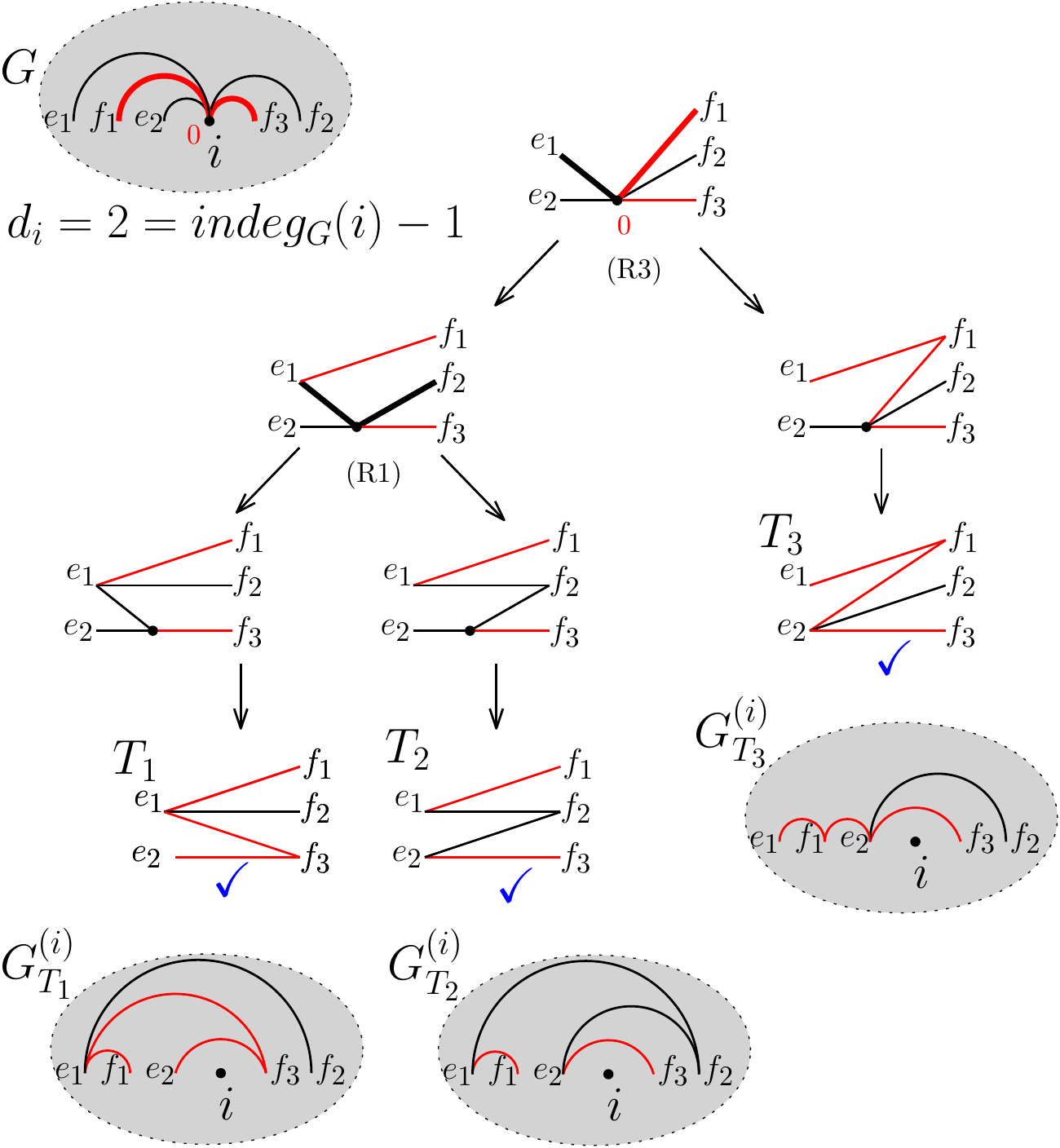}
\label{fig:exSubDivLemma}
}
\caption{(a) Obtaining final outcomes $\widehat{G}$ when either $\#\mathcal{I}_i(\widetilde{G})=1$ or $\#\mathcal{O}_i(\widetilde{G})=1$. (b) Inductive step in proof of the Subdivision Lemma. (c) Example of a subdivision (the selected edges to reduce are {\bf bo\textcolor{red}{ld}}). The final outcomes indicated by $\textcolor{blue}{\checkmark}$ are indexed by signed trees in $\mathcal{T}^{\pm}_{(e_1,e_2),(f_1,f_2,f_3)}(f_1,f_3)$ or equivalently the compositions $(0^+,0^-,1^+), (0^+,1^-,0^+)$, and $(1^+,0^-,0^+)$.}
\end{figure}

So from the reduction we get flow polytopes $\F_{G^{(i)}_T}(\g')$ where $\g'=(a_1,\ldots,a_{i-1},\widehat{a_i},a_{i+1},\ldots,a_{n+1})$ and $T$ is in $\mathcal{T}^{\pm}_{L,R}(R^+)$. Thus, by repeated application of Proposition~\ref{red}, it will follow that $\F_G(\g)$ subdivides as a union of $\F_{G^{(i)}_T}(\g')$ for all trees $T$ in $\mathcal{T}^{\pm}_{L,R}(R^+)$ as desired. 
\end{proof}


See Figure \ref{fig:exSubDivLemma} for an example of a subdivision into final outcomes that are indexed by noncrossing bipartite trees.

In the next section, we apply Lemma \ref{lem:nct} to compute the volume of the flow polytope  $\F_G({\bf a})$ where $G$ is a signed graph and ${\bf a}=(2,0,\ldots,0)$, the highest root of the root system $C_{n+1}$. As a motivation and to highlight differences, we first use a special case of the Subdivision Lemma, as done by Postnikov and Stanley \cite{p,S}, to compute the volume of the polytope $\F_{H}(1,0,\ldots,0,-1)$ where $H$ is a graph with only negative edges.

\section{Volume of flow polytopes} \label{sec:vol}

In this section we use the Subdivision Lemma (Lemma \ref{lem:nct}) on flow polytopes $\F_H(1,0,\ldots,0,-1)$, where $H$ is a graph with only negative edges, and on $\F_G(2,0,\ldots,0)$, where $G$ is a signed graph, to prove the formulae for their volume given in Theorem \ref{sp} (\cite{p,S}) and Theorem \ref{mm}, respectively. (Recall from Equation~\eqref{eq:Minkowski}, that every flow polytope $\F_{G'}(\aa)$ is a Minkowski sum of such flow polytopes.) To establish the connection between the volume of flow polytopes and Kostant partition functions for signed graphs, in Section~\ref{volsD} we introduce the notion of dynamic Kostant partition functions, which specializes to  Kostant partition functions in the case of graphs with only negative edges. 

\subsection{A correspondence between integer flows and simplices in a triangulation  of $\F_H(\ee_1-\ee_{n+1})$, where $H$  only has negative edges} \label{vols}

Let $H$ be a connected graph on the  vertex set $[n+1]$ and {\em only} negative edges, and $\F_H(1,0,\ldots,0,-1)$ be its flow polytope where $(1,0,\ldots,0,-1)\in \mathbb{Z}^{n+1}$. We apply Lemma \ref{lem:nct} successively to vertices $2,3,\ldots,n$ which have zero netflow. At the end we obtain the subdivision:
\begin{equation} \label{typeA:subdiv}
\F_H(1,0,\ldots,0,-1) = \bigcup_{T^-_{n}}\cdots  \bigcup_{T^-_3} \bigcup_{T^-_2} \F_{((\cdots  (H_{T^-_2}^{(2)})^{(3)}_{T^-_3}\cdots)_{T^-_{n}}^{(n)}}(1,-1),
\end{equation}
where $T_i^-$ are noncrossing trees with only negative edges. See Figure~\ref{tA} for an example of an outcome of a subdivision of an instance of $\F_H(1,0,\ldots,0,-1)$. The graph $H_n:=((\cdots  (H_{T^-_2}^{(2)})^{(3)}_{T^-_3}\cdots)_{T^-_{n}}^{(n)}$  consists of two vertices, $1$ and $n+1$ and $\#E(H)-n+1$ edges between them. Thus $\F_{H_n}(1,-1)$ is an $(\#E(H)-n)$-dimensional simplex with normalized unit volume (see Example~\ref{mainexs} (i)). Therefore, $\vol(\F_{H_n}(1,0,\ldots,0,-1))$ is the number of choices of bipartite noncrossing trees $T^-_{2},\ldots,T^-_n$ where $T^-_{i+1}$ encodes a composition of $\#\mathcal{I}_{i+1}(H_i)-1$ with $\#\mathcal{O}_{i+1}(H_i)$ parts. The next result by Postnikov and Stanley \cite{p,S} shows that this number of tuples of trees is also the number of certain integer flows on $H$. We reproduce their proof to motivate and highlight the differences with the case of signed graphs discussed in the next subsection. This result also appeared in \cite[Prop. 34]{BV1}.

\begin{theorem}[\cite{p,S}] \label{sp}  Given a loopless (signless) connected graph $H$ on the vertex set $[n+1]$, let $d_i=indeg_H(i)-1$ for $i \in \{2, \ldots, n\}$. Then, the normalized volume $\vol(\F_H(1,0,\ldots,0,-1))$ of the flow polytope associated to graph $H$ is 
\[
\vol(\F_H(1,0,\ldots,0,-1))=K_H(0,d_2, \ldots, d_n, -\sum_{i=2}^n d_i),
\]
where $K_H$ is the Kostant partition function of $H$.
\end{theorem}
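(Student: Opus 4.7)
The plan is to combine the subdivision of $\F_H(1,0,\ldots,0,-1)$ in~\eqref{typeA:subdiv} (obtained by successive application of the Subdivision Lemma at vertices $2,3,\ldots,n$, all of which have zero netflow) with a bijection between its indexing tuples and the integer flows counted by the right-hand side. The paragraph preceding the theorem already observes that each cell $\F_{H_n}(1,-1)$ is a unit simplex of dimension $\#E(H)-n$, so the normalized volume equals the total number of tuples $(T_2,\ldots,T_n)$, where each $T_i$ is equivalently a weak composition $(b_e)_{e\in \O_i(H_{i-1})}$ of $\#\I_i(H_{i-1})-1$ into $\#\O_i(H_{i-1})$ parts. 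By Remark~\ref{flowsKG}, the right-hand side $K_H(0,d_2,\ldots,d_n,-\sum_{i=2}^n d_i)$ counts the integer flows on $H$ with that netflow vector, so the claim reduces to a bijection between these two sets.

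The bijection I would use is the natural one. A key observation is that for $i\ge 2$, the outgoing edges of $i$ in the intermediate graph $H_{i-1}$ coincide with the original outgoing edges of $i$ in $H$: any truly new edge produced at some earlier step $k<i$ has the form $(a,b)$ with $a<k<b$ (in particular $a\neq i$), hence cannot originate at $i$. Thus the composition $T_i$ is naturally indexed by the edges $(i,j)\in E(H)$ with $j>i$. From a tuple $(T_2,\ldots,T_n)$, define $f(i,j):=b_{(i,j)}$ for $i\ge 2$, and $f(1,j):=0$ for every edge $(1,j)\in E(H)$. Conversely, any flow $f$ with the prescribed netflow automatically has $f(1,j)=0$, since vertex $1$ has no incoming edges and netflow $0$; one then recovers the compositions by $b_{(i,j)}:=f(i,j)$.

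The crux is to check that the constraint $\sum_{j} b_{(i,j)}=\#\I_i(H_{i-1})-1$ at vertex $i$ translates exactly to the netflow condition of $f$ at $i$. Iterating the first identity of Proposition~\ref{propG_T} over the reductions at vertices $2,\ldots,i-1$ gives
\[
\#\I_i(H_{i-1})=indeg_H(i)+\sum_{k<i,\;(k,i)\in E(H)} b_{(k,i)},
\]
where each summand counts the truly new incoming edges at $i$ created in step $k$: by Remark~\ref{compencoding}, the original edge $(k,i)$ is replaced by $b_{(k,i)}+1$ edges into $i$, a net gain of $b_{(k,i)}$. Substituting and rearranging yields
\[
\sum_{j>i,\;(i,j)\in E(H)} b_{(i,j)} \;-\; \sum_{k<i,\;(k,i)\in E(H)} b_{(k,i)} \;=\; indeg_H(i)-1 \;=\; d_i,
\]
which, using $f(1,i)=0$, is precisely the statement that the netflow of $f$ at vertex $i$ equals $d_i$. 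The netflow at vertex $n+1$ is then forced to be $-\sum_{i=2}^{n} d_i$ by the global identity $\sum_v a_v=0$. Hence the bijection is well-defined in both directions and the theorem follows.

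The only subtle point I anticipate is the bookkeeping of how many truly new edges created at step $k$ end at vertex $i$; once this is pinned down (via Remark~\ref{compencoding}, it is exactly $b_{(k,i)}$ whenever $(k,i)\in E(H)$), the composition identity and the netflow identity become literally the same equation.
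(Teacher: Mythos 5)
Your proposal is correct and follows essentially the same route as the paper: it uses the subdivision \eqref{typeA:subdiv} into unit simplices $\F_{H_n}(1,-1)$ and then the bijection between tuples of noncrossing trees (equivalently compositions recorded as flows on $\O_i(H)$, using that outgoing edges of $i$ are unaffected by earlier reductions) and integer flows with netflow $(0,d_2,\ldots,d_n,-\sum_{i=2}^n d_i)$, verified via Proposition~\ref{propG_T} and Remark~\ref{compencoding}. Your iterated identity $\#\I_i(H_{i-1})=indeg_H(i)+\sum_{k<i}b_{(k,i)}$ is exactly the bookkeeping the paper carries out step by step, so no gap remains.
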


\begin{example}[Application of Theorem \ref{sp}]
The flow polytope $\F_H(1,0,0,-1)$ for the negative graph $H$ in Figure \ref{figexthm1} (a) has normalized volume $4$. This is the number of flows on $H$ with netflow $(0,d_2,d_3,d_4)=(0,3,2,-5)$ where $d_i=indeg_H(i)-1$, i.e., $K_{H}(0,3,2,-5)=4$. The four flows are in Figure \ref{figexthm1} (b).
\end{example}

\noindent \textit{Proof of Theorem \ref{sp}.}
For this proof, let $H_i:=(\cdots  (H_{T^-_2}^{(2)})^{(3)}_{T^-_3}\cdots)_{T^-_{i}}^{(i)}$ for $i=2,\ldots,n$. From Equation \eqref{typeA:subdiv} and the discussion immediately after, we have that $\vol(\F_H(1,0,\ldots,0,-1))$ is the number of choices of noncrossing bipartite trees $(T^-_{2},\ldots,T^-_n)$ where $T_{i+1}^-$ encodes a composition of $\#\mathcal{I}_{i+1}(H_i)-1$ with $\#\mathcal{O}_{i+1}(H_i)$ parts. We give a correspondence between $\left(H;(T^-_{2},\ldots,T^-_n)\right)$ and integer ${\bf a}$-flows on $H$ where ${\bf a}=(0,d_2,\ldots,d_n,-\sum_{i=2}^nd_i)$.  The proof is then complete since by  Remark~\ref{flowsKG} these integer flows are counted by $K_H(0,d_2,\ldots,d_n,-\sum_{i=2}^n d_i)$. 

To give the correspondence between $\left(H;(T^-_{2},\ldots,T^-_n)\right)$ and integer ${\bf a}$-flows on $H$ where ${\bf a}=(0,d_2,\ldots,d_n,-\sum_{i=2}^nd_i)$, recall that the tree $T^-_{i+1}$ is given by a composition $(b^{(i+1)}_e)_{e\in \mathcal{O}_{i+1}(H_i)}$ of $\#\mathcal{I}_{i+1}(H_i)-1$ into $\#\mathcal{O}_{i+1}(H_i)$ parts. By Remark \ref{compencoding} (i), we can encode this composition by assigning a flow $b(e)=b^{(i+1)}_e$ to edges $e$ of $H_i$ in $\mathcal{O}_{i+1}(H_i)$. But since $H$ and $H_i$ consist only of negative edges, iterating Proposition \ref{propG_T} we see that 
\begin{equation}\label{outdegA}
\#\mathcal{O}_{i+1}(H_i)=\#\mathcal{O}_{i+1}(H).
\end{equation}
(In fact these sets are equal). Therefore, we can also encode the compositions {\em on the edges of $H$}. So, for $i=2,\ldots,n$ we record compositions $(b_e^{(i)})_{e\in \mathcal{O}_i(H)}$ (and thus the trees $T^-_{i}$) as flows $b(e)=b_e^{(i)}$ on $e\in\mathcal{O}_i(H)$ of $H$. For $i=1$, we assign flows $b(e)=0$ for $e\in \mathcal{O}_1(H)$. See the third column of Figure~\ref{tA} for an example of this encoding. Next we calculate the netflow on vertex $i+1$ of $H$:
\begin{align}
\sum_{e\in \mathcal{O}_{i+1}(H)}b(e)&=\#I_{i+1}(H_i)-1, \label{eq:Aoutdeg} \\ 
\sum_{e\in \mathcal{I}_{i+1}(H)}b(e)&=\#\{\{\text{truly new edges } (\cdot,i+1,-)\}\}. \label{eq:Aindeg}
\end{align}
Where Equation \eqref{eq:Aoutdeg} follows since $(b_e^{(i+1)})_{e\in \mathcal{O}_{i+1}(H)}$ is a composition of $\#I_{i+1}(H_i)-1$. Equation \eqref{eq:Aindeg} follows from Remark \ref{compencoding} (ii). Then using these two equations the netflow $a_{i+1}$ of vertex $i+1$ is
\begin{align*}
a_{i+1} &= \sum_{e \in \mathcal{O}_{i+1}(H)}b(e) - \sum_{e\in \mathcal{I}_{i+1}(H)} b(e)\\
&=\left(\#\mathcal{I}_{i+1}(H_i)-1\right)-\#\{\{\text{truly new edges }(\cdot,i+1,-)\}\}.\\
\intertext{By Proposition~\ref{propG_T} we get $\#\mathcal{I}_{i+1}(H_i)=\#\mathcal{I}_{i+1}(H)+\#\{\{\text{truly new edges }(\cdot,i+1,-)\}\}$, so}
a_{i+1} &= \left(\#\mathcal{I}_{i+1}(H)+\#\{\{\text{truly new edges }(\cdot,i+1,-)\}\}-1\right)- \#\{\{\text{truly new edges }(\cdot,i+1,-)\}\}.
\end{align*}  
So $a_{i+1}= \#\mathcal{I}_{i+1}(H)-1=indeg_H(i+1)-1 = d_{i+1}$. Thus we have a map from $\left(H; (T_2^-,\ldots,T^-_n)\right)$ to an integer ${\bf a}$-flow in $H$ where ${\bf a}=(0,d_2,\ldots,d_n,-\sum_{i=2}^n d_i)$. See Figure~\ref{tA} for an example of this map.

Next we show this map is bijective by building its inverse. Given such an integer flow on $H$, we read off the flows on the edges of $\mathcal{O}_i(H)$ for  $i=2,\ldots,n$ in clockwise order and obtain a weak composition of $N_i:=\sum_{e\in \mathcal{O}_i(H)} b(e)$ with $\#\mathcal{O}_i(H)$ parts. Next, we encode each of these compositions as noncrossing trees $T^-_i$. We know that $\#\mathcal{O}_{i+1}(H)=\#\mathcal{O}_{i+1}(H_i)$ and it is not hard to show by induction on $i$ that $N_{i+1}=\#\mathcal{I}_{i+1}(H_i)-1$ where $H_{i}=(\cdots(H_{T_2^-}^{(2)})^{(3)}_{T_3^-}\cdots)^{(i)}_{T^-_{i}}$. Thus $T_i^-$ encodes a composition of $\#\I_{i+1}(H_i)-1$ with $\#\O_{i+1}(H_i)$ parts. Therefore, we also have a map from an integer $\g$-flow in $H$ to a tuple $(H;(T_2^-,\ldots,T_n^-))$.


It is easy to see that the two maps described above are inverses of each other. This shows the first of these maps is the correspondence we desired.
\qed

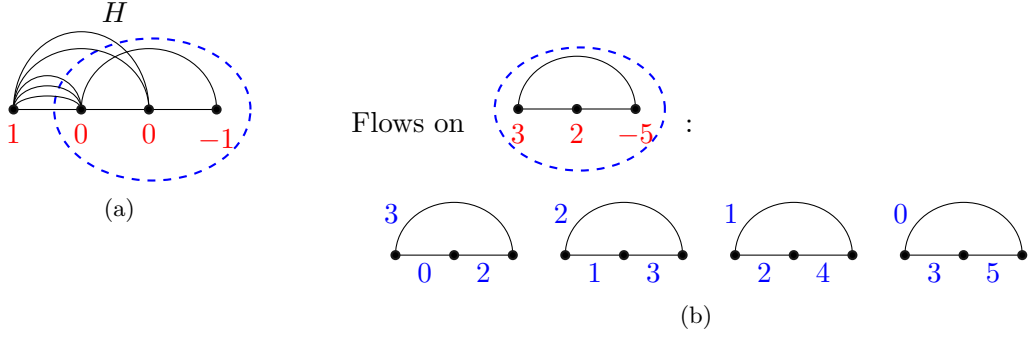
\begin{figure}
\subfigure[]{
\begin{tikzpicture}[scale=0.45] 
\draw[blue,thick,dashed] [-] (5,0) arc (0:360:2.9cm and 2.1cm);
\draw [-] (0,0) arc (0:180:1cm and 0.7cm); 
\draw [-] (0,0) arc (0:180:1cm and 0.4cm);
\draw [-] (-2,0) arc (180:0:2cm and 1.8cm);
\draw [-] (-2,0) arc (180:0:2cm and 2.3cm); 
\draw [-] (0,0) arc (0:180:1cm);
\draw [-] (4,0) to (-2,0);
\draw [-] (0,0) arc (180:0:2cm and 1.8cm);
\node [style=bb,label=below:\textcolor{red}{$1$}] (1) at (-2, 0) {};
\node [style=bb,label=below:\textcolor{red}{$0$}] (2) at (0, 0) {};
\node [style=bb,label=below:\textcolor{red}{$0$}] (3) at (2, 0) {};
\node [style=bb] (4) at (4, 0) {};
\node [label=below:\textcolor{red}{$-1$}] at (4,0) {};
\node [label=above:$H$] (00) at (1,2) {};
\end{tikzpicture}
}
\qquad
\subfigure[]{
\begin{tabular}{l}
\raisebox{20pt}{Flows on }
\raisebox{5pt}{
\begin{tikzpicture}[scale=0.39]
\draw[blue,thick,dashed] [-] (5,0) arc (0:360:2.9cm and 2.1cm);
\node at (0,1.7) {};
\draw[white,thick,dashed] [-] (3,0) arc (0:360:1cm); 
\draw [-] (4,0) to (0,0);
\draw [-] (0,0) arc (180:0:2cm and 1.8cm);
\node [style=bb,label=below:\textcolor{red}{$3$}] (2) at (0, 0) {};
\node [style=bb,label=below:\textcolor{red}{$2$}] (3) at (2, 0) {};
\node [style=bb,label=below:\textcolor{red}{$-5$}] (4) at (4, 0) {};
\end{tikzpicture}
}
\raisebox{20pt}{:}\\
{
\begin{tabular}{llll}
\begin{tikzpicture}[scale=0.39]
\node at (0,1.7) {};
\draw[white,thick,dashed] [-] (3,0) arc (0:360:1cm); 
\draw [-] (4,0) to (0,0);
\draw [-] (0,0) arc (180:0:2cm and 1.8cm);
\node [style=bb] (2) at (0, 0) {};
\node [style=bb] (3) at (2, 0) {};
\node [style=bb] (4) at (4, 0) {};
\node [label=left:$\textcolor{blue}{3}$] (b) at (0.8, 1.4) {};
\node [label=below:$\textcolor{blue}{0}$] (c) at (1, 0.4) {};
\node [label=below:$\textcolor{blue}{2}$] (b) at (3, 0.4) {};
\end{tikzpicture}
&
\begin{tikzpicture}[scale=0.39]
\node at (0,1.7) {};
\draw[white,thick,dashed] [-] (3,0) arc (0:360:1cm); 
\draw [-] (4,0) to (0,0);
\draw [-] (0,0) arc (180:0:2cm and 1.8cm);
\node [style=bb] (2) at (0, 0) {};
\node [style=bb] (3) at (2, 0) {};
\node [style=bb] (4) at (4, 0) {};
\node [label=left:$\textcolor{blue}{2}$] (b) at (0.8, 1.4) {};
\node [label=below:$\textcolor{blue}{1}$] (c) at (1, 0.4) {};
\node [label=below:$\textcolor{blue}{3}$] (b) at (3, 0.4) {};
\end{tikzpicture}
&
\begin{tikzpicture}[scale=0.39]
\node at (0,1.7) {};
\draw[white,thick,dashed] [-] (3,0) arc (0:360:1cm); 
\draw [-] (4,0) to (0,0);
\draw [-] (0,0) arc (180:0:2cm and 1.8cm);
\node [style=bb] (2) at (0, 0) {};
\node [style=bb] (3) at (2, 0) {};
\node [style=bb] (4) at (4, 0) {};
\node [label=left:$\textcolor{blue}{1}$] (b) at (0.8, 1.4) {};
\node [label=below:$\textcolor{blue}{2}$] (c) at (1, 0.4) {};
\node [label=below:$\textcolor{blue}{4}$] (b) at (3, 0.4) {};
\end{tikzpicture}
&
\begin{tikzpicture}[scale=0.39]
\node at (0,1.7) {};
\draw[white,thick,dashed] [-] (3,0) arc (0:360:1cm); 
\draw [-] (4,0) to (0,0);
\draw [-] (0,0) arc (180:0:2cm and 1.8cm);
\node [style=bb] (2) at (0, 0) {};
\node [style=bb] (3) at (2, 0) {};
\node [style=bb] (4) at (4, 0) {};
\node [label=left:$\textcolor{blue}{0}$] (b) at (0.8, 1.4) {};
\node [label=below:$\textcolor{blue}{3}$] (c) at (1, 0.4) {};
\node [label=below:$\textcolor{blue}{5}$] (b) at (3, 0.4) {};
\end{tikzpicture}
\end{tabular}
}
\end{tabular}
}
\caption{Example of Theorem \ref{sp} to find $\vol \F_H(1,0,0,-1)=K_H(0,3,2,-5)=4$: (a) Graph $H$ with negative edges, (b) the four flows on $H$ with netflow $(0,d_2,d_3,d_4)=(0,3,2,-5)$ where $d_i=indeg_H(i)-1$.}
\label{figexthm1}
\end{figure}

We now look at computing the normalized volume of $\F_G({\bf a})$ where $G$ is a signed graph and ${\bf a}=(2,0,\ldots,0)$. 

\subsection{A correspondence between dynamic integer flows and simplices in a triangulation  of $\F_G(2\ee_1)$, where $G$ is a signed graph} \label{volsD}


Let $G$ be a connected signed graph on the  vertex set $[n+1]$ and ${\bf a}=(2,0,\ldots,0)$. In order to subdivide the polytope $\F_{G}({\bf a})$, we follow the same first steps as in the previous case. Mainly:

We apply Lemma \ref{lem:nct} successively to vertices $2,3,\ldots,n+1$ which have zero netflow. At the end we obtain:
\begin{equation} \label{typeD:subdiv}
\F_G(2,0,\ldots,0) = \bigcup_{T_{n+1}}\cdots \bigcup_{T_3} \bigcup_{T_2} \F_{(\cdots  (G_{T_2}^{(2)})_{T_3}^{(3)}\cdots)_{T_{n+1}}^{(n+1)}}(2).
\end{equation}
See Figure \ref{tD} for an example of an outcome of a subdivision of an instance of $\F_G(2,0,\ldots,0)$. In this case, $G_{n+1}:=(\cdots  (G_{T_2}^{(2)})_{T_3}^{(3)}\cdots)_{T_{n+1}}^{(n+1)}$ is a graph consisting of one vertex with $\#E(G)-n$ positive loops.  Thus, $\F_{G_{n+1}}(2)$ is an $(\#E(G)-n-1)$-dimensional simplex with normalized unit volume (see Example~\ref{mainexs} (ii)). Therefore, $\vol(\F_G(2,0,\ldots,0))$ is the number of choices of signed noncrossing bipartite trees $T_{2}, T_{3},\ldots,T_{n+1}$ where $T_{i+1}$ encodes a composition of $\#\mathcal{I}_{i+1}(G_i)-1$ with $\#\mathcal{O}_{i+1}(G_i)$ parts. However, instead of a correspondence between $(G; (T_2,T_3,\ldots,T_{n+1}))$ and the usual integer flows on $G$, there is a correspondence with a special kind of integer flow on $G$ that we call {\bf dynamic integer flow}.

Next, we motivate the need of these new integer flows. Let $G_i:=(\cdots  (G_{T_2}^{(2)})\cdots)_{T_{i}}^{(i)}$ be as defined above. The tree $T_{i+1}$ is given by a signed composition $(b_e^{(i+1)})_{e\in \mathcal{O}_{i+1}(G_i)}$ of $\#\mathcal{I}_{i+1}(G_i)-1$ into $\# \mathcal{O}_{i+1}(G_i)$ parts. And again, by Remark \ref{compencoding} (i), we can encode the composition by assigning a flow $b(e)=b_e^{(i+1)}$ to edges $e$ of $G_i$ in $\mathcal{O}_{i+1}(G_i)$. However, contrary to Equation \eqref{outdegA}, iterating Proposition \ref{propG_T} we get
\begin{equation}\label{outdegD}
\#\mathcal{O}_{i+1}(G_i) \geq \#\mathcal{O}_{i+1}(G),
\end{equation}
e.g., in Figure \ref{G_T} (b), $\mathcal{O}_4(G)=\{\{(2,4,+)\}\}$ and $\mathcal{O}_4(G^{(2)}_{T_2})=\{\{(1,4,+),(1,4,+)\}\}$ where one of these two edges is a truly new positive edge (see Remark~\ref{compencoding}). Thus, we cannot encode the compositions as flows on a  {\em fixed} graph $G$ but rather on a graph $G$ {\em and} $\#\mathcal{O}_{i+1}(G_i)-\#\mathcal{O}_{i+1}(G)$ additional positive edges incident to $i$. The number of such additional edges is determined by the integer flows assigned to previous positive edges $(k,i+1,+)$ of $G$ where $k<i+1$. This is what we mean by dynamic flow since the graph $G$ changes as the flow is assigned. The next definition makes this precise.

\begin{figure}
\subfigure[]{
\includegraphics[width=7.55cm]{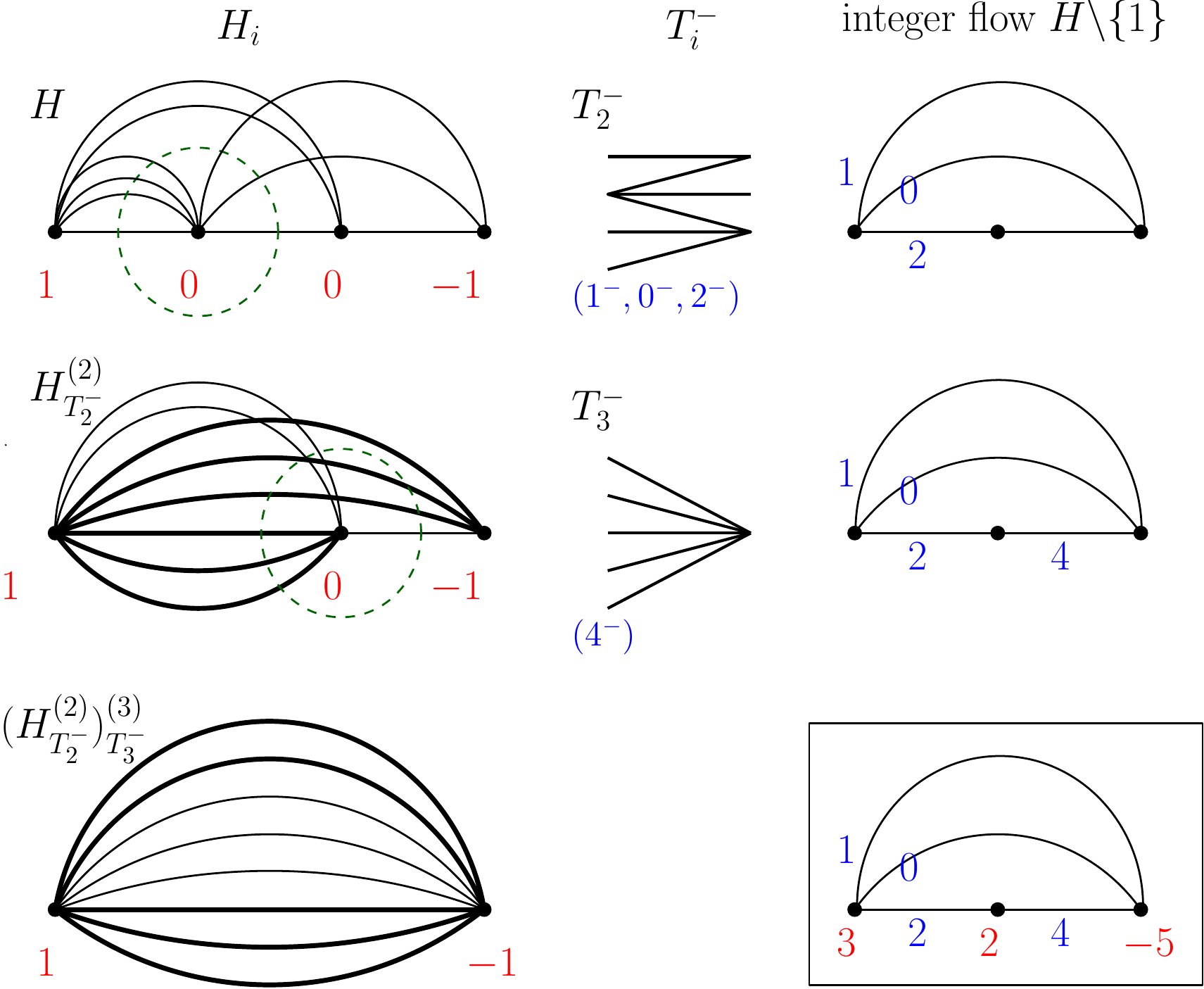}
\label{tA}
}
\qquad
\subfigure[]{
\includegraphics[width=7.55cm]{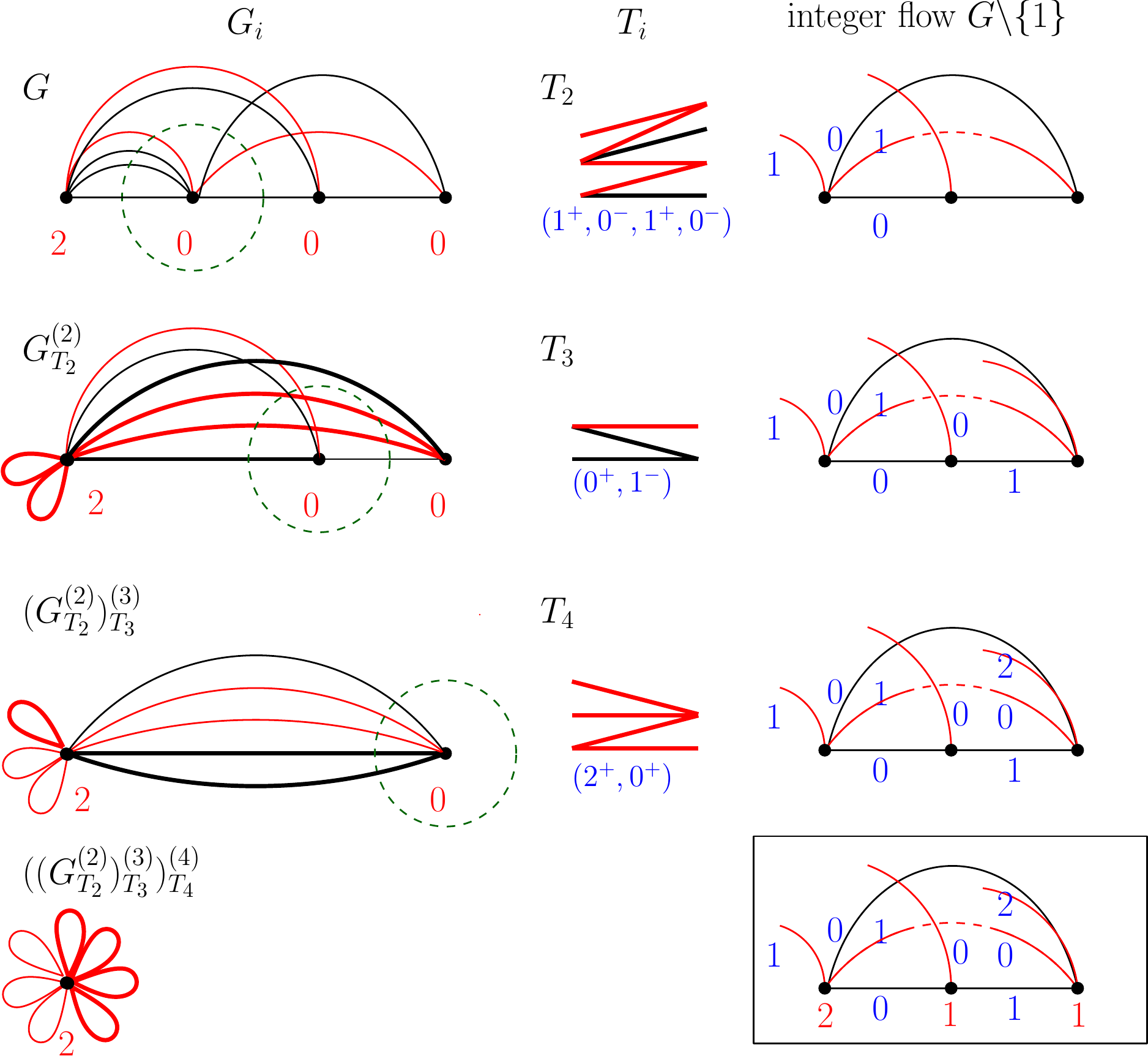}
\label{tD}
}
\caption{
Example of the subdivision to find the volume of (a) $\F_H(1,0,0,-1)$ for $H$ with only negative edges and of (b) $\F_G(2,0,0,0)$ for signed $G$. The subdivision is encoded by noncrossing trees $T_{i+1}$ that are equivalent to compositions $\textcolor{blue}{(b_1,\ldots,b_r)}$ of $\#\mathcal{I}_{i+1}(H_i)-1$ ($\#\mathcal{I}_{i+1}(G_i)-1$ resp.) with $\#\mathcal{O}_{i+1}(H_i)$ ($\#\mathcal{O}_{i+1}(G_i)$ resp.) parts. These trees or compositions are recorded by the integer (dynamic) flow on $H\backslash \{1\}$ ($G\backslash\{1\}$ resp.) in the box with netflow $(d_2,d_3,-d_2-d_3)=(3,2,-5)$ where $d_i=indeg_i(H)-1$ ($(d_2,d_3,d_4)=(2,1,1)$ where $d_i=indeg_i(G)-1$ resp.).}
\label{fig:subdivhrA}
\end{figure}

\begin{definition}[Dynamic integer flow] \label{def:dynflows}
Given a signed graph $G$ and an edge $e=(i,j,+)$ of $G$, we will regard $e=(i,j,+)$ as two positive {\em half-edges} $(i,\varnothing,+)$ and $(\varnothing,j,+)$ that still have ``memory'' of being together (see Figure \ref{fig:dynflow} (a)). We assign nonnegative {\em integer} flows $b_{\ell}(e)$ and $b_{r}(e)$ to the left and right halves of the positive edge, starting at the left half-edge. Once we assign $b_{\ell}(e)$ units of flow, {\em we add $b_{\ell}(e)$ {\bf extra right positive half-edges}}  {\em incident to $j$}. Any right positive half-edge $e'$ is assigned a nonnegative integer flow $b_r(e')$ (whether it was an extra right positive half-edge, or an original one). When we assign a nonnegative integer flow to a right positive half-edge no edges of any kind are added making the process of adding extra edges to the graph finite.

An analogue of Equation \eqref{eqn:flow} still holds:
\begin{equation}\label{eqn:dynflow}
 \sum_{e \in \I_i(G)} b(e)+a_i= \sum_{e \in \O_i^-(G)} b(e) + \sum_{e=(i,\cdot,+) \in \O_i^+(G)} b_{\ell}(e)+\sum_{e=(\cdot,i,+)\in \O_i^+(G)} b_{r}(e) + \sum_{\text{extra right} \atop \text{half-edges $e'=(\varnothing,i,+)$}} b_{r}(e'),
\end{equation}
where $a_i$ is the netflow at vertex $i$ and $\I_i(G)$, $\O^-_i(G)$, and $\O^+_i(G)$ are the incoming and outgoing to edges  as defined in Section~\ref{subsec:G_T}. We call these integer ${\bf a}$-flows {\bf dynamic}.
\end{definition}


\begin{example}
For the signed graph $G$ in Figure \ref{fig:dynflow} (a) with only one positive edge $e=(1,3,+)$, we give three of its $17$ integer dynamic flows with netflow $(2,1,1)$ where we add $b_{\ell}(e)=0,1$ and $2$ right half-edges respectively.
\end{example}

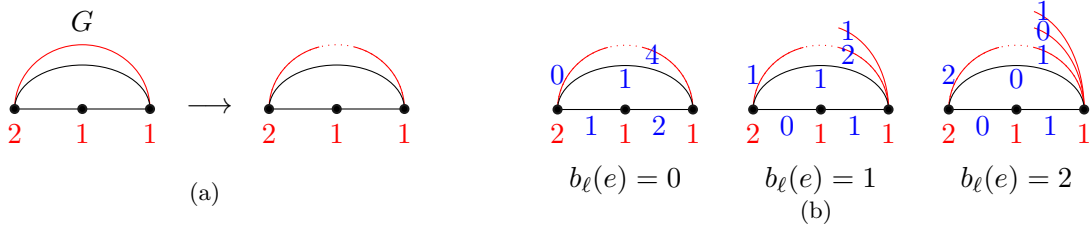
\begin{figure}
\centering
\subfigure[]{
\raisebox{5pt}{
\begin{tikzpicture}[scale=0.45] 
\draw [-] (4,0) to (0,0);
\draw[red] [-] (0,0) arc (180:0:2cm and 1.9cm);
\draw [-] (0,0) arc (180:0:2cm and 1.3cm);
\node [style=bb,label=below:$\textcolor{red}{2}$] (2) at (0, 0) {};
\node [style=bb,label=below:$\textcolor{red}{1}$] (3) at (2, 0) {};
\node [style=bb,label=below:$\textcolor{red}{1}$] (4) at (4, 0) {};
\node [label=below:$G$] (a) at (2, 3.5) {};
\end{tikzpicture}
}
\raisebox{20pt}{$\longrightarrow$}
\raisebox{5pt}{
\begin{tikzpicture}[scale=0.45] 
\draw [-] (4,0) to (0,0);
\draw[red,dotted] [-] (0,0) arc (180:0:2cm and 1.9cm);
\draw[red] [-] (0,0) arc (180:105:2cm and 1.90cm);
\draw[red] [-] (4,0) arc (0:75:2cm and 1.90cm);
\draw [-] (0,0) arc (180:0:2cm and 1.3cm);
\node [style=bb,label=below:$\textcolor{red}{2}$] (2) at (0, 0) {};
\node [style=bb,label=below:$\textcolor{red}{1}$] (3) at (2, 0) {};
\node [style=bb,label=below:$\textcolor{red}{1}$] (4) at (4, 0) {};
\end{tikzpicture}
}
}
\qquad
\subfigure[]{
\raisebox{28pt}{
\begin{tabular}{ccc}
\begin{tikzpicture}[scale=0.45] 
\draw [-] (4,0) to (0,0);
\draw[red,dotted] [-] (0,0) arc (180:0:2cm and 1.9cm);
\draw[red] [-] (0,0) arc (180:105:2cm and 1.90cm);
\draw[red] [-] (4,0) arc (0:75:2cm and 1.90cm);
\draw [-] (0,0) arc (180:0:2cm and 1.3cm);
\node [style=bb,label=below:$\textcolor{red}{2}$] (2) at (0, 0) {};
\node [style=bb,label=below:$\textcolor{red}{1}$] (3) at (2, 0) {};
\node [style=bb,label=below:$\textcolor{red}{1}$] (4) at (4, 0) {};
\node [label=below:$\textcolor{blue}{0}$] (b) at (0, 1.9) {};
\node [label=below:$\textcolor{blue}{1}$] (c) at (2, 1.8) {};
\node [label=below:$\textcolor{blue}{1}$] (c) at (1, 0.4) {};
\node [label=below:$\textcolor{blue}{2}$] (c) at (3, 0.4) {};
\node [label=below:$\textcolor{blue}{4}$] (c) at (2.8, 2.5) {};
\end{tikzpicture}
&
\begin{tikzpicture}[scale=0.45] 
\draw [-] (4,0) to (0,0);
\draw[red,dotted] [-] (0,0) arc (180:0:2cm and 1.9cm);
\draw[red] [-] (0,0) arc (180:105:2cm and 1.90cm);
\draw[red] [-] (4,0) arc (0:75:2cm and 1.90cm);
\draw[red] [-] (4,0) arc (0:75:2cm and 2.5cm);
\draw [-] (0,0) arc (180:0:2cm and 1.3cm);
\node [style=bb,label=below:$\textcolor{red}{2}$] (2) at (0, 0) {};
\node [style=bb,label=below:$\textcolor{red}{1}$] (3) at (2, 0) {};
\node [style=bb,label=below:$\textcolor{red}{1}$] (4) at (4, 0) {};
\node [label=below:$\textcolor{blue}{1}$] (b) at (0, 1.9) {};
\node [label=below:$\textcolor{blue}{1}$] (c) at (2, 1.8) {};
\node [label=below:$\textcolor{blue}{0}$] (c) at (1, 0.4) {};
\node [label=below:$\textcolor{blue}{1}$] (c) at (3, 0.4) {};
\node [label=below:$\textcolor{blue}{2}$] (c) at (2.8, 2.5) {};
\node [label=below:$\textcolor{blue}{1}$] (c) at (2.8, 3.2) {};
\end{tikzpicture}
&
\begin{tikzpicture}[scale=0.45] 
\draw [-] (4,0) to (0,0);
\draw[red,dotted] [-] (0,0) arc (180:0:2cm and 1.9cm);
\draw[red] [-] (0,0) arc (180:105:2cm and 1.90cm);
\draw[red] [-] (4,0) arc (0:75:2cm and 1.90cm);
\draw[red] [-] (4,0) arc (0:75:2cm and 2.5cm);
\draw[red] [-] (4,0) arc (0:75:2cm and 3cm);
\draw [-] (0,0) arc (180:0:2cm and 1.3cm);
\node [style=bb,label=below:$\textcolor{red}{2}$] (2) at (0, 0) {};
\node [style=bb,label=below:$\textcolor{red}{1}$] (3) at (2, 0) {};
\node [style=bb,label=below:$\textcolor{red}{1}$] (4) at (4, 0) {};
\node [label=below:$\textcolor{blue}{2}$] (b) at (0, 1.9) {};
\node [label=below:$\textcolor{blue}{0}$] (c) at (2, 1.8) {};
\node [label=below:$\textcolor{blue}{0}$] (c) at (1, 0.4) {};
\node [label=below:$\textcolor{blue}{1}$] (c) at (3, 0.4) {};
\node [label=below:$\textcolor{blue}{1}$] (c) at (2.8, 2.5) {};
\node [label=below:$\textcolor{blue}{0}$] (c) at (2.8, 3.2) {};
\node [label=below:$\textcolor{blue}{1}$] (c) at (2.8, 3.8) {};
\end{tikzpicture}
\\
$b_{\ell}(e)=0$ & $b_{\ell}(e)=1$ & $b_{\ell}(e)=2$
\end{tabular}
}
}
\caption{Example of dynamic flow: (a) signed graph $G$ with positive edge $e$ split into two half-edges, (b) three of the $17$ dynamic integer flows where $b_{\ell}(e)=0,1$, and $2$ so that zero, one and two right positive half-edges are added respectively.}
\label{fig:dynflow}
\end{figure}

We translate~\eqref{outdeg} from Proposition~\ref{propG_T} in terms of integer dynamic flows to turn~\eqref{outdegD} into an equality.

\begin{lemma} \label{lemG_Tdynflow} Let $G$ and $G_T^{(i)}$ be as in Proposition~\ref{propG_T}, where the tree $T$ is given by a weak composition $(b_e)_{e\in \O_i(G)}$ of $\#\I_i(G)-1$. Assign an integer dynamic flow $b(e)=b_e$ to the edges $e$ in $\O_i(G)$; more precisely, assign the flow $b(e)=b_e$ to the negative edges $e$ in  $\O_i(G)$ and assign the flow $b_{\ell}(e)=b_e$ to left part of the positive edges $e$ in  $\O_i(G)$. Finally,  add extra positive right half-edges according the definition of dynamic flow. Then for $j$ in $[n+1]$, $j>i$ we have that
\begin{align}
\#\mathcal{O}_j(G^{(i)}_T) &= \#\mathcal{O}_j(G) + \#\{\{\text{extra right half-edges $(\varnothing,j,+)$}\}\}.
\end{align}
\end{lemma}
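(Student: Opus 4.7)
The plan is to compare the count of truly new edges supplied by Proposition~\ref{propG_T} with the count of extra right half-edges supplied by Definition~\ref{def:dynflows}, and to show that these two counts agree at each vertex $j>i$. Starting from equation~\eqref{outdeg},
\[
\#\mathcal{O}_j(G^{(i)}_T) = \#\mathcal{O}_j(G) + \#\{\{\text{truly new edges } (k,j,+)\mid k\neq i\}\},
\]
it then suffices to prove that the last multiset has the same cardinality as the multiset of extra right half-edges $(\varnothing,j,+)$ incident to $j$.

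The key step is a case analysis on each outgoing edge $e\in \O_i(G)$ using the explicit formula \eqref{treeEdge} for $edge(e_1,e_2)$. First, if $e=(i,s,-)$ is negative, then all truly new edges arising from $e$ have the form $(r,s,-)$ and are negative, while on the dynamic flow side $e$ generates no extra right half-edge, since the rule in Definition~\ref{def:dynflows} applies only to positive edges. Second, if $e=(t,i,+)$ with $t<i$, then the positive edges $edge(e_1,e_2)$ produced from $e$ have larger endpoint $\max(r,t)<i$ (recalling $r<i$ for any incoming edge $(r,i,-)$), so none of them is a truly new edge at $j>i$; correspondingly, the $b_\ell(e)=b_e$ extra right half-edges generated by $e$ are placed at its right endpoint $i$, not at $j$. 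Third, and the only contributing case, if $e=(i,j,+)$ with $j>i$, then the $b_e+1$ edges $(r,j,+)$ produced in $G_T^{(i)}$ split, per Remark~\ref{compencoding}(ii), into the single edge $e'$ identified with the original $e$ and $b_e$ truly new positive edges at $j$; simultaneously Definition~\ref{def:dynflows} places exactly $b_\ell(e)=b_e$ extra right half-edges $(\varnothing,j,+)$ at $j$.

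Summing the third case over all $e=(i,j,+)\in \O_i(G)$ matches the two multisets in cardinality and yields the claimed equality. The only subtlety I anticipate is bookkeeping: cleanly isolating the single edge $e'$ standing for the original $e$ from the $b_e$ truly new edges in case three, and verifying that no positive edge $(t,i,+)$ with $t<i$ sneaks in a truly new edge at some $j>i$. Both points follow directly from \eqref{treeEdge} and the definitions, so I do not expect a genuine obstacle beyond careful case analysis.
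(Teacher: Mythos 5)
Your proof is correct and follows exactly the route the paper intends: the paper states this lemma as an immediate translation of equation \eqref{outdeg} from Proposition~\ref{propG_T} into the language of dynamic flows and omits the verification, and your case analysis on the edges of $\O_i(G)$ (only the positive edges $(i,j,+)$ contribute, giving $b_e$ truly new edges at $j$ matched by $b_\ell(e)=b_e$ extra right half-edges) supplies precisely the missing bookkeeping.
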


\begin{example}
For the signed graph $G$  and the tree $T_2$ encoding the composition 
\[
(b_{(1,2,+)},b_{(2,4,-)},b_{(2,4,+)},b_{(2,3,-)})=(1^+,0^-,1^+,0^-)
\] 
in Figure~\ref{ttD}, $\#\O_4(G^{(2)}_{T_2})=2$, $\#\O_4(G)=1$ and there is one extra right half-edge $(\varnothing,4,+)$ added when we assign the flow $b_{\ell}(2,\varnothing,+)=1$ to the left half-edge of $(2,4,+)$.
\end{example}

We also introduce an analogue of the Kostant partition function that counts integer dynamic flows of signed graphs. Later, in Section~\ref{genseriesKdyn} we will give a generating series for this function.

\begin{definition}[Dynamic Kostant partition function] \label{defdkpf}
Given a signed graph $G$ on the vertex set $[n+1]$ and ${\bf a}$ a vector in $\mathbb{Z}^{n+1}$, the {\bf dynamic Kostant partition function} $K_G^{\text{dyn}}({\bf a})$ is the number of integer dynamic ${\bf a}$-flows in $G$.  
\end{definition}

\begin{example}
For the signed graph $G$ in Figure \ref{fig:dynflow}, $K_G^{\text{dyn}}(2,1,1)=17$.
\end{example}


We are now ready to state and prove our main result as an application of the technique we developed. 

\begin{theorem} \label{mm}
Given a loopless connected signed graph $G$ on the vertex set $[n+1]$, let $d_i=indeg_G(i)-1$ for $i\in \{2,\ldots,n\}$. The normalized volume $\vol(\F_G)$ of the flow polytope associated to graph $G$ is   
\[
\vol(\F_G(2,0,\ldots,0)) = K_G^{\text{dyn}}(0,d_2, \ldots, d_n,d_{n+1}).
\]
\end{theorem}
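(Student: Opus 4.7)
My plan is to follow the strategy of the proof of Theorem~\ref{sp}, replacing ordinary integer flows by integer dynamic flows so as to absorb the positive edges of $G$. The first step is to apply the Subdivision Lemma (Lemma~\ref{lem:nct}) successively at the vertices $i = 2, 3, \ldots, n+1$, each of which has zero netflow under $\mathbf{a} = (2, 0, \ldots, 0)$. As recorded in Equation~\eqref{typeD:subdiv}, this produces the decomposition
\[
\F_G(2, 0, \ldots, 0) = \bigcup_{T_{n+1}} \cdots \bigcup_{T_3} \bigcup_{T_2} \F_{G_{n+1}}(2),
\]
where $G_{n+1} = (\cdots(G^{(2)}_{T_2})^{(3)}_{T_3} \cdots)^{(n+1)}_{T_{n+1}}$ is the single-vertex graph with $\#E(G) - n$ positive loops, whose flow polytope is a unit-volume simplex by Example~\ref{mainexs}\,(ii). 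Consequently $\vol(\F_G(2, 0, \ldots, 0))$ equals the number of tuples $(T_2, \ldots, T_{n+1})$ of signed noncrossing bipartite trees, or equivalently tuples of signed weak compositions $(b^{(i)}_e)_{e \in \O_i(G_{i-1})}$ of $\#\I_i(G_{i-1}) - 1$.

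Next I will set up a bijection between these tuples and integer dynamic $(0, d_2, \ldots, d_{n+1})$-flows on $G$. Given a tuple, at each step $i \in \{2, \ldots, n+1\}$ I use Lemma~\ref{lemG_Tdynflow} (iterated) to identify $\O_i(G_{i-1})$ with $\O_i(G)$ augmented by the extra right positive half-edges at $i$ produced by previous steps. I then read the composition as a piece of a dynamic flow on $G$: assign $b(e) = b^{(i)}_e$ for each negative edge $e \in \O^-_i(G)$; $b_\ell(e) = b^{(i)}_e$ for each outgoing positive edge $e = (i, \cdot, +)$, which automatically creates $b_\ell(e)$ matching extra right half-edges at its other endpoint; $b_r(e) = b^{(i)}_e$ for each incoming positive edge $e = (\cdot, i, +)$; and $b_r(e') = b^{(i)}_e$ for each extra right half-edge $e'$ at $i$. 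Finally, set the flows on all edges in $\O_1(G)$ to zero, since vertex $1$ is never processed.

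To verify that this produces a dynamic flow with netflow $d_i$ at each vertex $i \in \{2, \ldots, n+1\}$, I plug the assignment into Equation~\eqref{eqn:dynflow}. Its right-hand side at vertex $i$ becomes $\sum_{e \in \O_i(G_{i-1})} b^{(i)}_e = \#\I_i(G_{i-1}) - 1$ by the composition identity. For the negative incoming contribution $\sum_{e \in \I_i(G)} b(e)$ on the left-hand side, I will argue by induction on $i$: because extra right half-edges are positive and because R1 is the only reduction that creates truly new negative edges, each truly new negative edge $(\cdot, i, -)$ arises from the pairing in some tree $T_k$ ($k < i$) at an outgoing negative edge $(k, i, -) \in \O_k(G)$, with multiplicity exactly $b^{(k)}_{(k,i,-)} = b((k,i,-))$. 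Iterating Proposition~\ref{propG_T} then gives $\sum_{e \in \I_i(G)} b(e) = \#\I_i(G_{i-1}) - \#\I_i(G)$, and subtraction yields $a_i = \#\I_i(G) - 1 = d_i$. At vertex~$1$ there are no extra right half-edges (these would require a positive edge with larger endpoint~$1$) and every outgoing flow is zero, so $a_1 = 0$. The inverse map simply reads the compositions off of any integer dynamic flow, completing the bijection; by Definition~\ref{defdkpf}, the number of such flows is $K^{\text{dyn}}_G(0, d_2, \ldots, d_{n+1})$.

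The main obstacle will be the bookkeeping around positive edges. A positive edge enters the dynamic flow at two distinct subdivision steps (left half at the smaller endpoint, right half at the larger endpoint), and a nonzero left-half flow dynamically spawns additional right half-edges that must mesh exactly with the outgoing multiset $\O_i(G_{i-1})$ at later steps. Lemma~\ref{lemG_Tdynflow} is engineered precisely to make this matching canonical, so once the bijection is set up edge-by-edge as above the netflow identity reduces to a direct analogue of the calculation in the proof of Theorem~\ref{sp}.
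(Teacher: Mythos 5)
Your proposal is correct and follows essentially the same route as the paper's own proof: successive application of the Subdivision Lemma at vertices $2,\ldots,n+1$ to reduce to unit-volume simplices, then a bijection between the resulting tuples of signed noncrossing trees (compositions) and integer dynamic flows on $G$, with Lemma~\ref{lemG_Tdynflow} handling the extra right half-edges and Proposition~\ref{propG_T} giving the netflow computation $a_i=\#\I_i(G)-1=d_i$. The only difference is presentational: the paper spells out the inverse map slightly more explicitly (checking $N_{i+1}=\#\I_{i+1}(G_i)-1$ by induction), which you compress into ``reads the compositions off.''
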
  

\begin{example}[Application of  Theorem \ref{mm}]
The flow polytope $\F_G(2,0,0,0)$ for the signed graph $G$ in Figure \ref{figexthm2} (a) has normalized volume $5$. This is the number of dynamic integer flows on $G$ with netflow $(0,d_2,d_3,d_4)=(0,1,0,1)$ where $d_i=indeg_G(i)-1$. The five dynamic integer flows are in Figure \ref{figexthm2} (b).

\begin{figure}
\centering
\subfigure[]{
\begin{tikzpicture}[scale=0.45] 
\draw[blue,thick,dashed] [-] (4.5,0) arc (0:360:2.5cm and 2.1cm);
\draw [-] (0,0) arc (0:180:1cm and 0.7cm); 
\draw[red] [-] (-2,0) arc (180:0:2cm and 2.3cm); 
\draw [-] (4,0) to (-2,0);
\draw [-] (0,0) arc (180:0:2cm and 1.8cm);
\draw[red] [-] (0,0) arc (180:0:2cm and 1.3cm);  
\node (0) at (-3.4,0) {};
\node [style=bb,label=below:$\textcolor{red}{2}$] (1) at (-2, 0) {};
\node [style=bb,label=below:$\textcolor{red}{0}$] (2) at (0, 0) {};
\node [style=bb,label=below:$\textcolor{red}{0}$] (3) at (2, 0) {};
\node [style=bb,label=below:$\textcolor{red}{0}$] (4) at (4, 0) {};
\node [label=above:$G$] (00) at (1,2) {};
\end{tikzpicture}
}
\subfigure[]{
\begin{tabular}{l}
\raisebox{20pt}{Dynamic flows on }
\raisebox{5pt}{
\begin{tikzpicture}[scale=0.45] 
\draw[blue,thick,dashed] [-] (4.5,0) arc (0:360:2.5cm and 2.1cm);
\draw[white,thick,dashed] [-] (3,0) arc (0:360:1cm); 
\draw[red] [-] (2,0) arc (0:30:2cm); 
\draw [-] (4,0) to (0,0);
\draw [-] (0,0) arc (180:0:2cm and 1.8cm);
\draw[red,dotted] [-] (0,0) arc (180:0:2cm and 1.30cm);
\draw[red] [-] (0,0) arc (180:100:2cm and 1.3cm);
\draw[red] [-] (4,0) arc (0:80:2cm and 1.3cm);  
\node [style=bb,label=below:$\textcolor{red}{1}$] (2) at (0, 0) {};
\node [style=bb,label=below:$\textcolor{red}{0}$] (3) at (2, 0) {};
\node [style=bb,label=below:$\textcolor{red}{1}$] (4) at (4, 0) {};
\end{tikzpicture}
}
\raisebox{20pt}{:}\\
\begin{tabular}{lllll}
\begin{tikzpicture}[scale=0.45] 
\draw[white,thick,dashed] [-] (3,0) arc (0:360:1cm); 
\draw[red] [-] (2,0) arc (0:90:0.7cm); 
\draw [-] (4,0) to (0,0);
\draw [-] (0,0) arc (180:0:2cm and 1.8cm);
\draw[red] [-] (0,0) arc (180:100:2cm and 1.3cm);
\draw[red] [-] (4,0) arc (0:80:2cm and 1.3cm);
\draw[red,dotted] [-] (0,0) arc (180:0:2cm and 1.30cm);
\node [style=bb] (2) at (0, 0) {};
\node [style=bb] (3) at (2, 0) {};
\node [style=bb] (4) at (4, 0) {};
\node [label=below:$\textcolor{blue}{1}$] (b) at (2, 3) {};
\node [label=below:$\textcolor{blue}{0}$] (c) at (0.5, 1.6) {};
\node [label=below:$\textcolor{blue}{0}$] (c) at (1, 0.3) {};
\node [label=below:$\textcolor{blue}{0}$] (a) at (2.1, 1.5) {};
\node [label=below:$\textcolor{blue}{0}$] (b) at (3, 0.3) {};
\node [label=below:$\textcolor{blue}{0}$] (a) at (3, 1.8) {};
\end{tikzpicture}
&
\begin{tikzpicture}[scale=0.45] 
\draw[white,thick,dashed] [-] (3,0) arc (0:360:1cm); 
\draw[red] [-] (2,0) arc (0:90:0.7cm); 
\draw [-] (4,0) to (0,0);
\draw [-] (0,0) arc (180:0:2cm and 1.8cm);
\draw[red] [-] (0,0) arc (180:100:2cm and 1.3cm);
\draw[red] [-] (4,0) arc (0:80:2cm and 1.3cm);
\draw[red,dotted] [-] (0,0) arc (180:0:2cm and 1.30cm);
\node [style=bb] (2) at (0, 0) {};
\node [style=bb] (3) at (2, 0) {};
\node [style=bb] (4) at (4, 0) {};
\node [label=below:$\textcolor{blue}{0}$] (b) at (2, 3) {};
\node [label=below:$\textcolor{blue}{0}$] (c) at (0.5, 1.6) {};
\node [label=below:$\textcolor{blue}{1}$] (c) at (1, 0.3) {};
\node [label=below:$\textcolor{blue}{0}$] (a) at (2.1, 1.5) {};
\node [label=below:$\textcolor{blue}{1}$] (b) at (3, 0.3) {};
\node [label=below:$\textcolor{blue}{2}$] (a) at (3, 1.8) {};
\end{tikzpicture}
&
\begin{tikzpicture}[scale=0.45] 
\draw[white,thick,dashed] [-] (3,0) arc (0:360:1cm); 
\draw[red] [-] (2,0) arc (0:90:0.7cm); 
\draw [-] (4,0) to (0,0);
\draw [-] (0,0) arc (180:0:2cm and 1.8cm);
\draw[red] [-] (0,0) arc (180:100:2cm and 1.3cm);
\draw[red] [-] (4,0) arc (0:80:2cm and 1.3cm);
\draw[red,dotted] [-] (0,0) arc (180:0:2cm and 1.30cm);
\node [style=bb] (2) at (0, 0) {};
\node [style=bb] (3) at (2, 0) {};
\node [style=bb] (4) at (4, 0) {};
\node [label=below:$\textcolor{blue}{0}$] (b) at (2, 3) {};
\node [label=below:$\textcolor{blue}{0}$] (c) at (0.5, 1.6) {};
\node [label=below:$\textcolor{blue}{1}$] (c) at (1, 0.3) {};
\node [label=below:$\textcolor{blue}{1}$] (a) at (2.1, 1.5) {};
\node [label=below:$\textcolor{blue}{0}$] (b) at (3, 0.3) {};
\node [label=below:$\textcolor{blue}{1}$] (a) at (3, 1.8) {};
\end{tikzpicture}
&
\begin{tikzpicture}[scale=0.45] 
\draw[white,thick,dashed] [-] (3,0) arc (0:360:1cm); 
\draw[red] [-] (2,0) arc (0:90:0.7cm); 
\draw [-] (4,0) to (0,0);
\draw [-] (0,0) arc (180:0:2cm and 1.8cm);
\draw[red] [-] (0,0) arc (180:100:2cm and 1.3cm);
\draw[red] [-] (4,0) arc (0:80:2cm and 1.3cm);
\draw[red,dotted] [-] (0,0) arc (180:0:2cm and 1.30cm);
\draw[red] [-] (4,0) arc (0:80:2cm and 1cm);    
\node [style=bb] (2) at (0, 0) {};
\node [style=bb] (3) at (2, 0) {};
\node [style=bb] (4) at (4, 0) {};
\node [label=below:$\textcolor{blue}{0}$] (b) at (2, 3) {};
\node [label=below:$\textcolor{DarkGreen}{\bf 1}$] (c) at (0.5, 1.6) {};
\node [label=below:$\textcolor{blue}{0}$] (c) at (1, 0.3) {};
\node [label=below:$\textcolor{blue}{0}$] (a) at (2.1, 1.5) {};
\node [label=below:$\textcolor{blue}{0}$] (b) at (3, 0.3) {};
\node [label=below:$\textcolor{blue}{1}$] (a) at (2.7, 2.2) {};
\node [label=below:$\textcolor{blue}{0}$] (b) at (3.3, 1.5) {};
\end{tikzpicture}
&
\begin{tikzpicture}[scale=0.45] 
\draw[white,thick,dashed] [-] (3,0) arc (0:360:1cm); 
\draw[red] [-] (2,0) arc (0:90:0.7cm); 
\draw [-] (4,0) to (0,0);
\draw [-] (0,0) arc (180:0:2cm and 1.8cm);
\draw[red] [-] (0,0) arc (180:100:2cm and 1.3cm);
\draw[red] [-] (4,0) arc (0:80:2cm and 1.3cm);
\draw[red,dotted] [-] (0,0) arc (180:0:2cm and 1.30cm);
\draw[red] [-] (4,0) arc (0:80:2cm and 1cm);    
\node [style=bb] (2) at (0, 0) {};
\node [style=bb] (3) at (2, 0) {};
\node [style=bb] (4) at (4, 0) {};
\node [label=below:$\textcolor{blue}{0}$] (b) at (2, 3) {};
\node [label=below:$\textcolor{DarkGreen}{\bf 1}$] (c) at (0.5, 1.6) {};
\node [label=below:$\textcolor{blue}{0}$] (c) at (1, 0.3) {};
\node [label=below:$\textcolor{blue}{0}$] (a) at (2.1, 1.5) {};
\node [label=below:$\textcolor{blue}{0}$] (b) at (3, 0.3) {};
\node [label=below:$\textcolor{blue}{0}$] (a) at (2.7, 2.2) {};
\node [label=below:$\textcolor{blue}{1}$] (b) at (3.3, 1.5) {};
\end{tikzpicture}
\end{tabular}
\end{tabular}
}
\caption{Example of Theorem \ref{mm} to find $\vol \F_G(2,0,0,0)=K_G^{\text{dyn}}(0,1,0,1)=5$: (a) Signed graph $G$, (b) the five dynamic flows on $G$ with netflow $(0,d_2,d_3,d_4)=(0,1,0,1)$ where $d_i=indeg_G(i)-1$ (the last two flows have an additional right positive half-edge).} 
\label{figexthm2}
\end{figure}
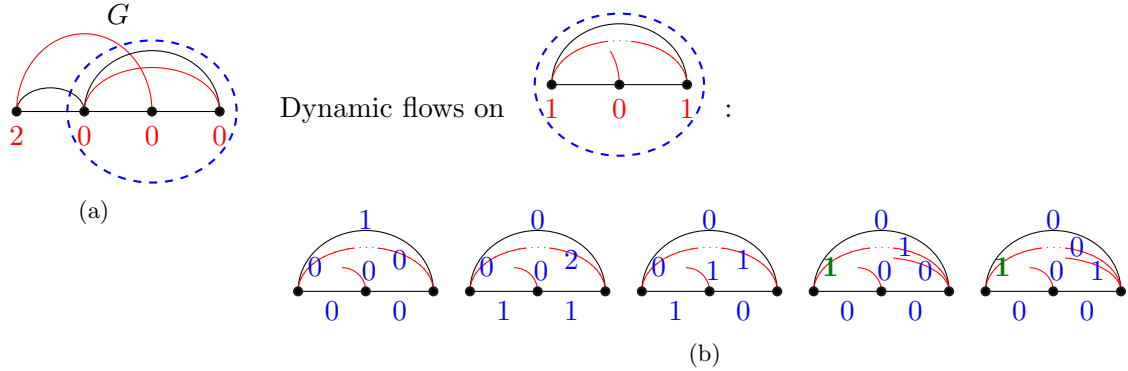
\end{example}

\noindent \textit{Proof of Theorem \ref{mm}.}
Recall   from the argument right before Definition~\ref{def:dynflows},  that $\vol(\F_G(2,0,\ldots,0))$ is the number of tuples $(T_2,\ldots,T_{n+1})$ of bipartite trees, each tree $T_{i+1}$ encoding a composition of $\#\mathcal{I}_{i+1}(G_i)-1$ with $\#\mathcal{O}_{i+1}(G_i)$ parts (where $G_i$ is the graph $(\cdots (G_{T_2}^{(2)})_{T_3}^{(3)}\cdots)_{T_i}^{(i)}$). We can encode the parts of each composition as dynamic integer flows on $G$ since by Lemma~\ref{lemG_Tdynflow} we will have
\[
\#\mathcal{O}_{i+1}(G) + \#\{\{\text{extra right half-edges $(\varnothing,i+1,+)$}\}\}=\#\mathcal{O}_{i+1}(G_i).
\] 
Next, we calculate the netflow on vertex $i+1$ of $G$:  by~\eqref{eqn:dynflow} we have
\begin{align*}
\lefteqn{a_{i+1} =}\\ 
&= \sum_{e\in \O^-_{i+1}} b(e) + \sum_{e=(i+1,\cdot,+)\in \O^+_{i+1}}b_{\ell}(e)+\sum_{e=(\cdot,i+1,+) \in \O^+_{i+1}} b_{r}(e) + \sum_{\text{extra $e'=(\varnothing,i+1,+)$}} b_r(e') - \sum_{e\in \I_{i+1}} b(e). 
\end{align*}
Where the contributions of the flows of outgoing edges and incoming edges are
\begin{equation}
\sum_{e\in \O^-_{i+1}} b(e) + \sum_{e=(i+1,\cdot,+)\in \O^+_{i+1}}b_{\ell}(e)+\sum_{e=(\cdot,i+1,+) \in \O^+_{i+1}} b_{r}(e) + \sum_{\text{extra $e'=(\varnothing,i+1,+)$}} b_r(e') = \#\mathcal{I}_{i+1}(G_i)-1, \label{eq:Doutdeg}
\end{equation}
\begin{equation}
\sum_{e\in \I_{i+1}} b(e) = \#\{\{\text{truly new edges $(\cdot,i+1,-)$}\}\}, \label{eq:Dindeg}
\end{equation}
where Equation~\eqref{eq:Doutdeg} follows since by repeated applications of Lemma~\ref{lemG_Tdynflow}: 
\[
(b_e)_{e\in \O_{i+1}(G) \cup \{\{\text{extra $e'=(\varnothing,i+1,+)$}\}\}}
\] 
is a composition of $\#I_{i+1}(G_i)-1$, and Equation~\eqref{eq:Dindeg} follows from Remark~\ref{compencoding} (ii). Then $a_{i+1}=\#\mathcal{I}_{i+1}(G_i)-1 -\#\{\{\text{truly new edges $(\cdot,i+1,-)$}\}\}$. Using Proposition~\ref{propG_T} this simplifies to
\begin{align*}
a_{i+1} &= \left(\#\mathcal{I}_{i+1}(G)+\#\{\{\text{truly new edges $(\cdot,i+1,-)$}\}\}-1\right)-\#\{\{\text{truly new edges $(\cdot,i+1,-)$}\}\}.
\end{align*}
So $a_{i+1}=\#\mathcal{I}_{i+1}(G)-1=indeg_G(i+1)-1$. Thus we have a map from $(G;(T_2,\ldots,T_{n+1}))$ to an integer dynamic ${\bf a}$-flow in $G$ where ${\bf a}=(0,d_2,d_3,\ldots,d_{n+1})$ for $d_i=indeg_G(i)-1$. See Figure~\ref{tD} for an example of this map.

Next we show this map is bijective by building its inverse. Given such an integer dynamic flow in $G$, we read off the flows on the edges of $\mathcal{O}_i(G)$ and the extra right positive half-edges $e'$ incident to $i$ for $i=2,\ldots,n+1$ in clockwise order. We obtain a weak composition of 
\[
N_i:=\sum_{e\in \mathcal{O}_i(G)} b(e) + \sum_{\text{ extra half-edges $e'=(\varnothing,i,+)$}} b_r(e'),
\]
 into $\#\mathcal{O}_i(G) + \#\{\text{extra right half-edges $(\varnothing,i,+)$}\}$ parts. Next, we encode these compositions as signed noncrossing trees. By by repeated applications of Lemma~\ref{lemG_Tdynflow}, we know that
\[
\#\mathcal{O}_{i+1}(G) + \#\{\text{extra right half-edges $(\varnothing,i+1,+)$}\}=\#\mathcal{O}_{i+1}(G_i),
\] 
and it is not hard to show by induction that $N_{i+1}=\#\mathcal{I}_{i+1}(G_i)-1$ where $G_i= (\cdots  (G_{T_2}^{(2)})_{T_3}^{(3)}\cdots)_{T_{i}}^{(i)}$. Thus $T_i$ encodes a composition of $\I_{i+1}(G_i)-1$ with $\#\O_{i+1}(G_i)$ parts. Therefore, we also have a map from an integer dynamic $\g$-flow in $G$ to a tuple $(G;(T_2,\ldots,T_{n+1}))$. 

It is easy to see that the two maps described above are inverses of each other. This shows the first map is the correspondence we desired.     
\qed

We end this section by giving the generating series of $K_G^{\text{dyn}}({\bf a})$ which we will use when we apply Theorem~\ref{mm} in Section~\ref{CRYAD}.

\subsection{A generating series for the dynamic Kostant partition function} \label{genseriesKdyn}

Next, using Definitions~\ref{def:dynflows} and \ref{defdkpf} we give the generating series of the dynamic Kostant partition function $K_G^{\text{dyn}}({\bf a})$.

\begin{proposition} \label{prop:Kdyngs}
The generating series of the dynamic Kostant partition function is
\begin{equation}\label{Kdyngs}
\sum_{{\bf a} \in \mathbb{Z}^{n+1}} K_G^{\text{dyn}}({\bf a}){\bf x}^{\bf a} = \prod_{(i,j,-) \in E(G)} (1-x_ix_j^{-1})^{-1} \prod_{(i,j,+) \in E(G)} (1-x_i-x_j)^{-1},
\end{equation}
where ${\bf x}^{\bf a}=x_1^{a_1}x_2^{a_2}\cdots x_{n+1}^{a_{n+1}}$.
\end{proposition}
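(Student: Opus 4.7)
My plan is to prove the identity by observing that, in a dynamic flow, the data chosen on different edges of $G$ are independent, and each edge's data contributes additively to the netflow vector ${\bf a}$. Hence the multivariate generating series factors as a product of local contributions, one per edge of $G$:
\[
\sum_{{\bf a} \in \Z^{n+1}} K_G^{\text{dyn}}({\bf a})\,{\bf x}^{\bf a} \;=\; \prod_{e \in E(G)} F_e(x_1,\ldots,x_{n+1}),
\]
where $F_e$ records all ways of assigning dynamic-flow data to $e$, weighted by ${\bf x}^{(\text{contribution of } e \text{ to netflow})}$. So the whole proof reduces to computing $F_e$ in each of the three cases (negative edge, positive edge, positive loop) and comparing with the claimed product.

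For a negative edge $e=(i,j,-)$ with $i<j$, the only datum is a single integer $b(e)\in \Z_{\geq 0}$, and by the discussion following \eqref{eqn:flow} its netflow contribution is $b(e)(\ee_i-\ee_j)$; hence $F_e = \sum_{b\geq 0} x_i^b x_j^{-b} = (1-x_i x_j^{-1})^{-1}$, matching the first product in \eqref{Kdyngs}. For a positive edge $e=(i,j,+)$ with $i<j$, Definition \ref{def:dynflows} prescribes a flow $L:=b_{\ell}(e)$ on the left half-edge (contributing $L\ee_i$), the creation of $L$ extra right half-edges, and then a choice of nonnegative integer flows on the $L+1$ right half-edges (the original one plus the $L$ extras), all contributing to vertex $j$. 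If the right flows sum to $R$, the number of ways to distribute them among the $L+1$ half-edges is $\binom{L+R}{L}$, and the total netflow contribution is $L\ee_i+R\ee_j$. Therefore
\[
F_e \;=\; \sum_{L,R\geq 0} \binom{L+R}{L} x_i^L x_j^R \;=\; \sum_{k\geq 0} (x_i+x_j)^k \;=\; (1-x_i-x_j)^{-1},
\]
which matches the second product in \eqref{Kdyngs}.

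The loop case $(i,i,+)$ runs essentially the same computation with $j=i$: both halves feed into vertex $i$, so the contribution to $a_i$ is $L+R$, and summing $\binom{L+R}{L}$ over $L+R=k$ yields $2^k$, giving $F_e = \sum_{k\geq 0} 2^k x_i^k = (1-2x_i)^{-1} = (1-x_i-x_j)^{-1}|_{j=i}$ — again consistent with the claimed formula. Multiplying the $F_e$ over all $e \in E(G)$ produces the right-hand side of \eqref{Kdyngs}, completing the proof. The only nontrivial step is the identification of $\sum_{L,R\geq 0}\binom{L+R}{L}x_i^Lx_j^R$ with $(1-x_i-x_j)^{-1}$, which is the Newton expansion of $1/(1-u-v)$; beyond that, the argument is purely bookkeeping, since the whole difficulty of making the dynamic construction well-defined has already been folded into Definition \ref{def:dynflows}.
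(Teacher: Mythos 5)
Your proof is correct and takes essentially the same route as the paper: both arguments factor the generating series edge by edge and compute a local contribution, $(1-x_ix_j^{-1})^{-1}$ for each negative edge and $(1-x_i-x_j)^{-1}$ for each positive edge. The only difference is how the positive-edge sum is evaluated --- the paper writes it as $\sum_{k\geq 0} x_i^k(1-x_j)^{-k-1}$ (the $k+1$ right half-edges each contributing a geometric factor $(1-x_j)^{-1}$, then resummed), while you expand it as $\sum_{L,R\geq 0}\binom{L+R}{L}x_i^Lx_j^R$ and invoke the Newton expansion of $(1-x_i-x_j)^{-1}$; your explicit treatment of loops is a harmless addition consistent with the stated formula.
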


\begin{proof}
By Definition~\ref{def:dynflows} of the integer dynamic flow, if the left half-edge $(i,\varnothing,+)$ of a positive edge $e=(i,j,+)$ has flow $k\in \N$ then we add $k$ extra right half-edges $(\varnothing,j,+)$ incident to $j$ besides the existing half-edge $(\varnothing,j,+)$. In this case the contribution to the generating series of the dynamic integer flows is $x_i^k(1-x_j)^{-k-1}$. Thus the total contribution to the generating series from $e=(i,j,+)$ in $G$ is
\begin{align*}
\sum_{k\geq 0} x_i^k(1-x_j)^{-k-1} &= (1-x_j)^{-1}(1-x_i(1-x_j)^{-1})^{-1}\\
&= (1-x_i-x_j)^{-1}.
\end{align*}
In addition, just as in \eqref{gsKA} the contributions of negative edges $e=(i,j,-)$ is $(1-x_ix_j^{-1})^{-1}$. Taking the product of these contributions for each of the edges of $G$ gives the stated generating series $\sum_{{\bf a} \in \mathbb{Z}^{n+1}} K_G^{\text{dyn}}({\bf a}){\bf x}^{\bf a}$.
\end{proof}

\begin{remark}
By assigning the possible integer flows to left half-edges, adding the appropriate number of right half-edges and correcting the netflow, it is possible to write the dynamic Kostant partition function $K_G^{\text{dyn}}(\g)$ as a finite sum of Kostant partition functions. For example for the graph $G$ in Figure~\ref{fig:dynflow}: $K_G^{\text{dyn}}(2,1,1) = K_{G_{(0)}}(2,1,1)+K_{G_{(1)}}(1,1,1)+K_{G_{(2)}}(0,1,1)=3+8+6$ where $G_{(i)}$, for $i=0,1,2$,  is obtained from $G$ by setting the flow  on the left half-edge $(1,\varnothing,+)$ to be $i$ and adding $i$ right half-edges $(\varnothing,4,+)$. This observation together with the piecewise quasipolynomiality of $K_G(\cdot)$ imply that $K_G^{\text{dyn}}(\g)$ is a sum of piecewise quasipolynomial functions. It would be interesting to study the chamber structure of $K_G^{\text{dyn}}$.
\end{remark}

\section{The volumes of the (signed) Chan-Robbins-Yuen polytopes}  \label{CRYAD}

When $H=K_{n+1}$, the complete graph on $n+1$ vertices, $\mathcal{F}_{K_{n+1}}(1,0,\ldots,0,-1)$ is also known as the Chan-Robbins-Yuen polytope $CRYA_n$ \cite{CR,CRY} (see Examples~\ref{mainexs} (iii)). Such polytope is a face of the Birkhoff polytope of all $n\times n$ doubly stochastic matrices. Zeilberger computed in \cite{Z} the volume of this polytope using the {\em Morris identity} \cite[Thm. 4.13]{WM}.  This polytope has drawn much attention with its combinatorial-looking volume $\prod_{i=1}^{n-2}Cat(i)$, and the lack of a combinatorial proof of this volume formula. In this section we study $CRYA_n$ and its type $C_{n+1}$ and $D_{n+1}$ generalizations.

\subsection{Chan-Robbins-Yuen polytope of type $A_n$} We reproduce an equivalent proof of Zeilberger's result using Theorem~\ref{sp}. First we mention the version of the identity used in \cite{Z} and a special value of it which gives a product of consecutive Catalan numbers. Then we use Theorem~\ref{sp} to show that the volume of the polytope reduces to this value of the identity.


\begin{lemma}[Morris Identity \cite{Z}] \label{morrisID}
For a positive integers $m$, $a$, and $b$, and positive half integers $c$, let 
\[
H(a,b,c; x_1,x_2,\ldots,x_m) := \prod_{i=1}^m x_i^{-a} (1-x_i)^{-b} \prod_{1\leq i<j\leq m} (x_j-x_i)^{-2c},
\]
and let $M_m(a,b,c)= CT_{x_m} \cdots CT_{x_1} H(a,b,c; x_1,x_2,\ldots,x_m)$, where $CT_{x_i}$ mean the constant term in the expansion of the variable $x_i$. Then

\begin{equation} \label{morrid}
M_m(a,b,c)  = \frac{1}{m!} \prod_{j=1}^{m-1} \frac{\Gamma(a+b+(m-1+j)c) \Gamma( c )}{\Gamma(a+jc+1)\Gamma(b+jc) \Gamma(c+jc)},
\end{equation}
where $\Gamma(\cdot)$ is a gamma function ($\Gamma(j)=(j-1)!$ when $j\in \mathbb{N}$).
\end{lemma}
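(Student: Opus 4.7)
The plan is to derive Morris's constant term identity from Selberg's classical integral
$$\int_{[0,1]^m} \prod_{i=1}^m t_i^{\alpha-1}(1-t_i)^{\beta-1} \prod_{i<j}|t_i-t_j|^{2\gamma}\, d{\bf t} = \prod_{j=0}^{m-1}\frac{\Gamma(\alpha+j\gamma)\Gamma(\beta+j\gamma)\Gamma(1+(j+1)\gamma)}{\Gamma(\alpha+\beta+(m+j-1)\gamma)\Gamma(1+\gamma)},$$
of which the Morris identity is the ``constant term'' reformulation. Since $c$ is a positive half-integer, $2c$ is a positive integer and the factor $\prod_{i<j}(x_j - x_i)^{-2c}$ expands unambiguously as a formal Laurent series, e.g.\ with the convention $(x_j - x_i)^{-2c} = x_j^{-2c}(1 - x_i/x_j)^{-2c}$ for $i<j$, so $M_m(a,b,c)$ is well-defined as a rational number.

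First I would write the iterated constant term as a contour integral
$$M_m(a,b,c) = \frac{1}{(2\pi i)^m}\oint \cdots \oint H(a,b,c;x_1,\ldots,x_m)\,\frac{dx_1\cdots dx_m}{x_1\cdots x_m},$$
with each $x_i$-contour a small positively-oriented circle around $0$. A suitable change of variables together with a contour deformation then converts this into Selberg's integral with parameters $(\alpha,\beta,\gamma)$ linearly related to $(a,b,c)$ (e.g.\ $\alpha = -a$, $\beta = -b+1$, $\gamma = -c$, up to the necessary shifts to move the exponents into the range where Selberg converges). The remaining step is algebraic: reindex the resulting Gamma-function product and use standard identities (such as $\Gamma(z+1) = z\Gamma(z)$ and the reflection formula) to match the form in~\eqref{morrid}.

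The main obstacle will be the contour-to-interval passage: the integrand has poles at $x_i = 0$ and $x_i = 1$, so the deformation must carefully track the residue contributions, and the resulting equality becomes an identity of rational functions in $a,b,c$ rather than one of convergent integrals. As a cleaner alternative I would attempt induction on $m$: peel off the outermost constant term in $x_m$ using the one-variable beta identity $CT_x\, x^{-a-1}(1-x)^{-b} = \binom{a+b-1}{a}$, absorb the factors $(x_m-x_i)^{-2c}$ into the remaining variables by expanding and shifting exponents, and check that the induction hypothesis applied to the reduced Morris expression with shifted parameters $(a, b+2c, c)$ (the standard Aomoto/Askey-style recursion) telescopes to the Gamma product in~\eqref{morrid}. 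Either route reduces Morris's identity to a straightforward, if tedious, manipulation of Gamma functions.
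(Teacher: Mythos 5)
The first thing to note is that the paper does not prove this lemma at all: it is quoted from Zeilberger \cite{Z}, who relies on Morris's thesis \cite[Thm.~4.13]{WM}, so there is no internal proof to compare against, and a citation is all the paper needs. Judged on its own terms, your proposal is a plan rather than a proof, and both routes you sketch stop precisely at the hard points. For the Selberg route: since the exponents are negative, the iterated constant term is defined only relative to an expansion convention for the factors $(x_j-x_i)^{-2c}$, so the contour representation needs nested circles $|x_1|<|x_2|<\cdots<1$ rather than arbitrary ``small circles around $0$''; more seriously, the deformation to an integral over $[0,1]^m$ lands outside the convergence range of Selberg's integral (your parameters $\alpha=-a$, $\gamma=-c$ are negative), and saying the result ``becomes an identity of rational functions in $a,b,c$'' does not close this gap: for fixed half-integer $c$ the quantity $M_m(a,b,c)$ is a function of the integers $a,b$ only, so one must actually set up a polynomiality/Carlson-type or analytic-continuation argument, and tracking the residues picked up at $x_i=0$ and $x_i=1$ during the deformation is exactly the content that Morris's proof supplies via a functional equation; none of this is carried out.

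The inductive alternative has the same character. The one-variable evaluation is misquoted ($CT_x\,x^{-a-1}(1-x)^{-b}=\binom{a+b}{a+1}$, not $\binom{a+b-1}{a}$), and, more importantly, peeling off the constant term in $x_m$ does not simply hand you the Morris expression in $x_1,\ldots,x_{m-1}$ with parameters $(a,b+2c,c)$: the factors $(x_m-x_i)^{-2c}$ couple $x_m$ to every remaining variable, so after extracting the relevant coefficient one gets a sum over compositions of cross terms, and showing that this telescopes to the Gamma product is essentially equivalent to establishing the Aomoto/Morris recursion itself --- which is the whole difficulty of the identity, not a ``straightforward, if tedious, manipulation of Gamma functions.'' So the overall direction (Morris's identity is equivalent to Selberg's integral, or provable by an Aomoto-style recursion) is historically correct, but as written the proposal does not constitute a proof; if you want more than the citation the paper gives, you would need to carry out Morris's recursion or the constant-term-to-Selberg equivalence in detail.
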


Next, we give a special value of this identity.
\begin{corollary}[\cite{Z}] For the constant term $M_m(a,b,c)$ defined above, we have
\begin{equation} \label{moddidVal}
M_m(2,0,1/2)=M_m(1,1,1/2)=\prod_{k=1}^m Cat(k),
\end{equation}
where $Cat(k)=\frac{1}{k+1}\binom{2k}{k}$ is the $k$th Catalan number.
\end{corollary}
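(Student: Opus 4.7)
The plan is to prove both identities by substituting each parameter triple into formula (4.1) of the Morris identity (Lemma~\ref{morrisID}) and simplifying. Since $c=1/2$ in both cases, every Gamma-function argument will be either an integer or a half-integer, so I can use $\Gamma(k+1)=k!$ and $\Gamma(k+\tfrac12) = \frac{(2k)!\sqrt{\pi}}{4^k\,k!}$ to evaluate everything explicitly.

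First I would handle $(a,b,c)=(1,1,1/2)$. The $j$-th factor of the product in \eqref{morrid} becomes
$$\frac{\Gamma\!\left(2 + \tfrac{m-1+j}{2}\right)\Gamma(\tfrac12)}{\Gamma\!\left(2+\tfrac{j}{2}\right)\,\Gamma\!\left(1+\tfrac{j}{2}\right)\,\Gamma\!\left(\tfrac12+\tfrac{j}{2}\right)}.$$
To avoid splitting into cases according to the parity of $j$, I would apply the Legendre duplication formula $\Gamma(z)\Gamma(z+\tfrac12)=2^{1-2z}\sqrt{\pi}\,\Gamma(2z)$ to combine $\Gamma(1+\tfrac{j}{2})\Gamma(\tfrac12+\tfrac{j}{2})$ in the denominator (and an appropriate pairing in the numerator). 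This rewrites the product over $j=1,\ldots,m-1$ together with the prefactor $1/m!$ as a product indexed by $k=1,\ldots,m$ with only integer-argument Gammas. All $\sqrt{\pi}$ factors cancel by counting, and the accumulated powers of $2$ collapse by elementary telescoping. Matching the resulting ratio of factorials against $\prod_{k=1}^m \frac{(2k)!}{k!\,(k+1)!}$ then gives $\prod_{k=1}^m Cat(k)$.

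For $(a,b,c)=(2,0,1/2)$ the strategy is identical: the change $(1,1)\mapsto (2,0)$ only alters two of the three Gammas in each denominator, shifting $\Gamma(a+jc+1)$ up by $1/2$ and $\Gamma(b+jc)$ down by $1/2$, while $\Gamma(a+b+(m-1+j)c)$ is unchanged. After simplification via the duplication formula and the explicit half-integer Gamma values, the product telescopes to the same Catalan product as in the first case, yielding both equalities simultaneously.

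The main obstacle is bookkeeping --- tracking the powers of $2$ and $\sqrt{\pi}$ accurately and choosing the right pairings of half-integer Gammas so that the duplication formula eliminates the half-integer arguments uniformly (rather than having to split on parity of $j$). Once the right pairing is identified, the remaining step is pure factorial algebra, and the appearance of $\frac{1}{k+1}\binom{2k}{k}$ is forced by the structure of $\Gamma(k+\tfrac12)/\Gamma(k+\tfrac32)$ combined with $k!(k+1)!$ arising naturally in the denominator.
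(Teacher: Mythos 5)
Your overall route --- substitute into the closed form \eqref{morrid} and grind through the Gamma-function algebra with the duplication formula --- is the natural one (the paper gives no proof of this corollary at all; it is quoted from \cite{Z}), and for $(a,b,c)=(1,1,1/2)$ it can be made to work. But it does not work on the formula you say you will substitute into: as printed, Lemma~\ref{morrisID} has the product running over $j=1,\ldots,m-1$, and with that range the right-hand side at $(1,1,1/2)$ is \emph{not} $\prod_{k=1}^m Cat(k)$; already for $m=2$ it equals $\frac{1}{2}\cdot\frac{\Gamma(3)\Gamma(1/2)}{\Gamma(5/2)\Gamma(3/2)\Gamma(1)}=\frac{8}{3\sqrt{\pi}}$, whereas $Cat(1)Cat(2)=2$. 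The product must run over $j=0,1,\ldots,m-1$ (as in Zeilberger's statement of Morris's identity); only then does the count of $\sqrt{\pi}$'s balance --- with $m-1$ factors your claimed ``all $\sqrt{\pi}$ factors cancel by counting'' is off by one --- and only then does the factorial algebra telescope to the Catalan product. So a complete proof along your lines has to detect and repair this index range before any of the promised bookkeeping can succeed; your proposal takes the printed formula at face value.

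The more serious gap is the first equality, $M_m(2,0,1/2)=\prod_{k=1}^m Cat(k)$, which you treat as the ``identical'' computation. It is not a legitimate substitution: the lemma requires $b$ to be a positive integer, and at $b=0$ the corrected product acquires the factor $\Gamma(b+0\cdot c)=\Gamma(0)$ in a denominator, so naive evaluation of the closed form gives $0$ (while the printed range gives $\frac{8}{15\sqrt{\pi}}\neq 2$ for $m=2$). Worse, the constant-term definition itself vanishes at $b=0$: with the expansion convention the paper uses in its own CRY computation, namely $(x_j-x_i)^{-1}=x_j^{-1}(1-x_i/x_j)^{-1}$, the variable $x_m$ occurs in $H(2,0,1/2;x_1,\ldots,x_m)$ only with strictly negative exponents (there is no $(1-x_m)^{-b}$ factor to supply positive powers), so $CT_{x_m}$ is already $0$. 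Hence $M_m(2,0,1/2)$ cannot be obtained by plugging $b=0$ into either the definition or the formula; giving it meaning and proving it equals the Catalan product needs a separate argument --- a precise limiting/analytic-continuation statement, a parameter-shift or change-of-variables identity of the type treated in \cite{BVMorr}, or a different constant-term representation --- none of which your proposal supplies. As written, the claim that the two cases ``telescope to the same Catalan product simultaneously'' is exactly the step that fails.
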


\begin{corollary}[\cite{Z}] For $n\geq 1$,  let $K_{n+1}$ be the complete graph on $n+1$ vertices. Then the volume of the flow polytope $\F_{K_{n+1}}(1,0,\ldots,0,-1)$ is
\[
\vol(\mathcal{F}_{K_{n+1}}(1,0,\ldots,0,-1)) = \prod_{k=0}^{n-2} Cat(k),
\]
where $Cat(k)=\frac{1}{k+1}\binom{2k}{k}$ is the $k$th Catalan number.
\end{corollary}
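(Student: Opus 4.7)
The plan is to combine Theorem~\ref{sp} with the special value \eqref{moddidVal} of the Morris constant-term identity (Lemma~\ref{morrisID}); this reinterprets Zeilberger's computation in our framework.

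First I would apply Theorem~\ref{sp} to $H = K_{n+1}$. Since $indeg_{K_{n+1}}(i) = i-1$, we have $d_i = i-2$ for $i \in \{2,\ldots,n\}$ and $\sum_{i=2}^n d_i = \binom{n-1}{2}$, so
\[
\vol(\F_{K_{n+1}}(1,0,\ldots,0,-1)) \;=\; K_{K_{n+1}}\bigl(0,0,1,2,\ldots,n-2,-\tbinom{n-1}{2}\bigr).
\]
Next, I would use the generating series \eqref{gsKA} to express this Kostant partition function as the constant term
\[
CT\; x_3^{-1}x_4^{-2}\cdots x_n^{-(n-2)} x_{n+1}^{\binom{n-1}{2}} \prod_{1\leq i<j\leq n+1}(1-x_i x_j^{-1})^{-1}.
\]

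To finish, I would massage this expression into the Morris form $CT\prod_i y_i^{-2} \prod_{i<j}(y_j - y_i)^{-1}$ in $m = n-2$ variables. The key operations are: rewrite each $(1-x_i x_j^{-1})^{-1}$ as $x_j(x_j-x_i)^{-1}$; collect the resulting powers of each $x_j$ together with the prescribed monomial exponents; and perform a change of variables that eliminates those variables corresponding to zero netflow coordinates as well as $x_{n+1}$ (this reduction being legitimate because the netflow vector has zero coordinate sum, so the integrand is monomial-homogeneous of degree $0$). After the bookkeeping, the exponent of each of the remaining $n-2$ variables is exactly $-2$, so the constant term becomes $M_{n-2}(2,0,1/2)$. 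Applying \eqref{moddidVal} yields $\prod_{k=1}^{n-2}Cat(k) = \prod_{k=0}^{n-2}Cat(k)$ (using $Cat(0)=1$), completing the proof.

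The main obstacle is this last massaging step: the bookkeeping needed to convert the constant term coming from \eqref{gsKA} into the precise Morris form requires an unambiguous convention for the Laurent expansion of each rational factor and a careful verification that all the resulting exponents land at $-2$. This is the nontrivial bridge between the combinatorial Kostant-partition-function identity supplied by Theorem~\ref{sp} and the analytic Selberg--Morris identity, and is essentially Zeilberger's original argument \cite{Z} reinterpreted in our setting.
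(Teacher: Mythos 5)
Your proposal follows essentially the same route as the paper's proof: apply Theorem~\ref{sp} to get $K_{K_{n+1}}(0,0,1,\ldots,n-2,-\binom{n-1}{2})$, pass to the constant-term expression via \eqref{gsKA}, rewrite $(1-x_ix_j^{-1})^{-1}=x_j(x_j-x_i)^{-1}$, eliminate the zero-netflow variables and the last variable by homogeneity, and invoke the special value \eqref{moddidVal} of the Morris identity. One small bookkeeping correction: after setting the last variable to $1$, the factors $(1-x_i)^{-1}$ coming from edges into the sink remain, so the constant term one actually reaches is $CT\prod_{i=1}^{n-2}x_i^{-1}(1-x_i)^{-1}\prod_{i<j}(x_j-x_i)^{-1}=M_{n-2}(1,1,1/2)$ rather than the $\prod_i x_i^{-2}$ form $M_{n-2}(2,0,1/2)$ you describe; since \eqref{moddidVal} assigns both parameter choices the same value $\prod_{k=1}^{n-2}Cat(k)$, the conclusion is unaffected.
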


\begin{proof}
If $H=K_{n+1}$, by Theorem \ref{sp} we have that 
\begin{align*}
\vol(\F_{K_{n+1}}(1,0,\ldots,0,-1)) &= K_{K_{n+1}}(0,0,1,2,\ldots,n-2,-(n-2)(n-1)/2)\\
&= K_{K_{n-1}}(1,2,\ldots,n-2,-(n-2)(n-1)/2),
\end{align*}
where we reduced from $K_{n+1}$ to $K_{n-1}$ since the netflow on the first two vertices of $K_{n+1}$ is zero. Then from the generating series of the Kostant partition function \eqref{gsKA}: 
\begin{equation} \label{cryAgs1}
K_{K_{n-1}}(1,2,\ldots,n-2,-{\textstyle \binom{n-1}{2}}) = [x_1^1x_2^2\cdots x_{n-2}^{n-2}] \left.\prod_{1\leq i<j\leq n-1} (1-x_ix_j^{-1})^{-1}\right|_{x_{n-1}=1}
\end{equation}
where we have set $x_{n-1}=1$ since its power is determined by the power of the other variables. Since $1/(1-x_ix_j^{-1})= x_j/(x_j-x_i)$ then 
\begin{align}
 \vol \F_{K_{n+1}}(1,0,\ldots,0,-1)
 &= [x_1^1x_2^2\cdots x_{n-2}^{n-2}]\,\, x_1^0x_2^1x_3^2\cdots  x_{n-2}^{n-3}  
\prod_{i=1}^{n-2} (1-x_i)^{-1} \prod_{1\leq i<j \leq n-2} (x_j-x_i)^{-1} \nonumber \\ 
\label{cryAgs2} &= [x_1x_2\cdots x_{n-2} ]\prod_{i=1}^{n-2} (1-x_i)^{-1} \prod_{1\leq i<j \leq n-2} (x_j-x_i)^{-1}.
\end{align}
Since $[x] f(x)= CT_x \frac{1}{x} f(x)$ we get
\begin{equation} \label{cryAgs3}
\vol \F_{K_{n+1}}(1,0,\ldots,0,-1)
= CT_{x_{n-2}}CT_{x_{n-3}} \cdots CT_{x_1} \prod_{i=1}^{n-2}x_i^{-1} (1-x_i)^{-1} \prod_{1\leq i<j \leq n-2} (x_j-x_i)^{-1}.
\end{equation}
Note that the right-hand-side above is $M_{n-2}(1,1,1/2)$. Then by \eqref{moddidVal} the result follows.
\end{proof}

\begin{remark}
(i) Note that in this case of $H=K_{n+1}$, the multiset $\{\{\alpha_i\}\}$ of roots corresponding to the edges of $K_{n+1}$ are all the positive type $A_{n}$ roots, and the netflow vector $(1,0,\ldots,0,-1)$ is the highest root in type $A_{n}$. The volumes of $\F_{K_{n+1}}(\g)$ for generic positive roots in $A_n$ do not appear to have nice product formulas.
\noindent (ii) There is no combinatorial proof for the formula of the normalized volume of $\mathcal{F}_{K_{n+1}}(1,0,\ldots,0,-1)$. Another proof of this formula using residues was given by Baldoni and Vergne \cite{BV1,BVMorr}.
\end{remark}

\subsection{Volumes of Chan-Robbins-Yuen polytopes of type $C_n$ and type $D_n$.}


Recall from Examples~\ref{mainexs}~(iv) that $K^{D}_{n}$ is the complete signed graph on $n$ vertices (all edges of the form $(i,j,\pm)$ for $1\leq i<j\leq n$ corresponding to all the positive roots in type $D_{n}$), and   $CRYD_n=\mathcal{F}_{K_{n}^{D}}(2,0,\ldots,0)$ is an analogue of the Chan-Robbins-Yuen polytope. Next, using Theorem~\ref{mm} and Proposition~\ref{prop:Kdyngs} we express the volume of this polytope as the constant of a certain rational function. This is an analogue of~\eqref{cryAgs3}.

\begin{proposition}
Let $CRY D_n$ be the flow polytope $\F_{K_n^D}(2,0,\ldots,0)$ where $K_n^D$ is the complete signed graph with $n$ vertices (all edges of the form $(i,j,\pm)$, $1\leq i<j \leq n$). Then 
\begin{equation} \label{eqCRYDMorrislike}
\vol(CRY D_n) = CT_{x_{n-2}}\cdots CT_{x_1} \prod_{i=1}^{n-2}x_i^{-1} (1-x_i)^{-2}  \prod_{1\leq i<j \leq n-2} (x_j-x_i)^{-1}(1-x_j-x_i)^{-1} 
\end{equation}
\end{proposition}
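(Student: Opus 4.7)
The plan is to follow the same sequence of steps as equations~\eqref{cryAgs1}--\eqref{cryAgs3} in the type $A$ case, replacing Theorem~\ref{sp} by Theorem~\ref{mm} and the generating series~\eqref{gsKA} by its dynamic counterpart in Proposition~\ref{prop:Kdyngs}. First I would compute the indegrees of $K_n^D$: the only (negatively) incoming edges at vertex $i$ are the $(j,i,-)$ with $j<i$, so $indeg_{K_n^D}(i)=i-1$ and hence $d_i=i-2$ for $i\in\{2,\ldots,n\}$. Applying Theorem~\ref{mm} then yields
\[
\vol(CRYD_n) \;=\; K^{\text{dyn}}_{K_n^D}(0,0,1,2,\ldots,n-2),
\]
and by Proposition~\ref{prop:Kdyngs} this value equals
\[
[x_1^0 x_2^0 x_3^1 x_4^2\cdots x_n^{n-2}]\;\prod_{1\leq i<j\leq n}(1-x_i x_j^{-1})^{-1}(1-x_i-x_j)^{-1}.
\]

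The variables $x_1$ and $x_2$ appear in this rational generating series only with non-negative powers, since expanding $(1-x_r x_j^{-1})^{-1}=\sum_{k\geq 0}x_r^k x_j^{-k}$ and $(1-x_r-x_j)^{-1}=\sum_{k\geq 0}x_r^k(1-x_j)^{-k-1}$ for $r\in\{1,2\}$ produces only non-negative powers of $x_r$. Hence extracting the coefficient of $x_1^0 x_2^0$ is the same as substituting $x_1=x_2=0$. Performing this substitution collapses the pairs $(1,j)$ and $(2,j)$ for $j\geq 3$ to a factor $(1-x_j)^{-1}$ each (coming from the positive-edge factor), the pair $(1,2)$ to $1$, and leaves the pairs $(i,j)$ with $3\leq i<j\leq n$ untouched. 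Combining the two $(1-x_j)^{-1}$ contributions gives
\[
\vol(CRYD_n)\;=\;[x_3^1 x_4^2\cdots x_n^{n-2}]\;\prod_{j=3}^{n}(1-x_j)^{-2}\prod_{3\leq i<j\leq n}(1-x_i x_j^{-1})^{-1}(1-x_i-x_j)^{-1}.
\]

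Finally, after relabeling $y_i=x_{i+2}$ for $i=1,\ldots,n-2$, I would proceed exactly as in~\eqref{cryAgs2}--\eqref{cryAgs3}: the identity $(1-y_iy_j^{-1})^{-1}=y_j(y_j-y_i)^{-1}$ gives
\[
\prod_{1\leq i<j\leq n-2}(1-y_iy_j^{-1})^{-1}=\Bigl(\prod_{j=1}^{n-2} y_j^{j-1}\Bigr)\prod_{1\leq i<j\leq n-2}(y_j-y_i)^{-1},
\]
and rewriting coefficient extraction as a constant term via $[y^{a}]f=CT_y(y^{-a}f)$ introduces the factor $\prod_i y_i^{-i}$. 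The product $\prod_i y_i^{-i}\cdot\prod_j y_j^{j-1}$ simplifies to $\prod_i y_i^{-1}$, giving precisely the claimed formula. The main obstacle is the bookkeeping: carefully tracking how the $x_1=x_2=0$ substitution produces the squared factors $(1-x_j)^{-2}$ (which is the new feature absent from the type $A_n$ calculation) and verifying that the exponent arithmetic between $\prod_i y_i^{-i}$ and $\prod_j y_j^{j-1}$ yields exactly $\prod_i y_i^{-1}$ in the target constant-term expression.
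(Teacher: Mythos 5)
Your proposal is correct and follows essentially the same route as the paper: apply Theorem \ref{mm} to get $\vol(CRYD_n)=K^{\text{dyn}}_{K_n^D}(0,0,1,\ldots,n-2)$, extract the coefficient from the generating series of Proposition \ref{prop:Kdyngs} by setting $x_1=x_2=0$ (which produces the new $(1-x_j)^{-2}$ factors), relabel, and repeat the type $A$ manipulation \eqref{cryAgs1}--\eqref{cryAgs3}; you in fact spell out the exponent bookkeeping that the paper only cites by analogy. The only slight imprecision is the claim that $x_2$ occurs with non-negative powers everywhere (the factor $(1-x_1x_2^{-1})^{-1}$ has negative powers of $x_2$, but only alongside positive powers of $x_1$, so extracting $[x_1^0]$ first justifies the substitution), a point the paper itself glosses over identically.
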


\begin{proof}
By Theorem~\ref{mm} if $G=K^{D}_n$  we have that 
\[
\vol(\F_{K_n^D}(2,0,\ldots,0)) = K_{K_n^D}^{dyn}(0,0,1,2,\ldots,n-2).
\]
So by Proposition~\ref{prop:Kdyngs} and since the netflow on the first two vertices is zero this volume is given in terms of the generating series \eqref{Kdyngs} of $K^{dyn}_{K^D_n}$ by 
\[
\vol(\F_{K_n^D}(2,0,\ldots,0)) =  [x_3^1x_4^2\cdots x_n^{n-2}] CT_{x_2}CT_{x_1} \prod_{1\leq i<j\leq n} (1-x_ix_j^{-1})^{-1}(1-x_i-x_j)^{-1}.
\]
Then by plugging in $x_1=x_2=0$ and relabeling the variables $x_m\mapsto x_{m-2}$ on $\prod_{1\leq i<j\leq n} (1-x_ix_j^{-1})^{-1}(1-x_i-x_j)^{-1}$ gives:
\[
\vol(\F_{K_n^D}(2,0,\ldots,0)) = [x_1^1x_2^2\cdots x_{n-2}^{n-2}] \prod_{1\leq i<j\leq n-2} (1-x_ix_j^{-1})^{-1}(1-x_i-x_j)^{-1}\prod_{1\leq i\leq n-2} (1-x_i)^{-2}.
\]
In addition, just as we did with $CRY A_{n+1}$ in~\eqref{cryAgs1}-\eqref{cryAgs3} the above equation is equivalent to the desired expression:
\begin{equation*}  
\vol(\F_{K_n^D}(2,0,\ldots,0)) = CT_{x_{n-2}}CT_{x_{n-3}} \cdots CT_{x_1} \prod_{i=1}^{n-2}x_i^{-1} (1-x_i)^{-2}  \prod_{1\leq i<j \leq n-2} (x_j-x_i)^{-1}(1-x_j-x_i)^{-1}.
\end{equation*}
\end{proof}

We get the following values for $v_{n}=\vol(CRYD_n)$ either through counting integer dynamic flows (code available at \cite{Code}), or using \eqref{eqCRYDMorrislike}, or direct volume computation (using the {\tt Maple} package {\tt convex} \cite{MF} and code from Baldoni-Beck-Cochet-Vergne \cite{BBCV}):

\[
\begin{array}{r|r|r|r|r|r|r}
n & 2 & 3 & 4 & 5 & 6 & 7 \\ \hline
v_{n}& {1} & {2} &  {32} & {5120} & {9175040} & {197300060160}\\ \hline
\frac{v_{n}}{v_{n-1}} &  & 2^1\cdot {\bf 1} &  2^3\cdot {\bf 2} &  2^5\cdot {\bf 10} & 2^7\cdot {\bf 14} & 2^{9} \cdot {\bf 42}
\end{array}
\]
which suggests the following conjecture:
\begin{conjecture} \label{conjcryDC} Let $CRYD_n$ be the flow polytope $\mathcal{F}_{K_{n}^{D}}(2,0,\ldots,0)$ where $K^{D}_{n}$ is the complete signed graph with $n$ vertices (all edges of the form $(i,j,\pm)$, $1\leq i<j\leq n$). Then the normalized volume of $CRYD_n$ is
\[
\vol(CRYD_n) = 2^{(n-2)^2} \prod_{k=0}^{n-2} Cat(k).
\]
\end{conjecture}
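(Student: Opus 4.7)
The natural starting point is equation \eqref{eqCRYDMorrislike}, which reduces the conjecture to the constant term evaluation
\[
CT_{x_{n-2}}\cdots CT_{x_1} \prod_{i=1}^{n-2} x_i^{-1}(1-x_i)^{-2} \prod_{1\le i<j\le n-2}(x_j-x_i)^{-1}(1-x_i-x_j)^{-1} \;=\; 2^{(n-2)^2}\prod_{k=0}^{n-2} Cat(k).
\]
In the type $A$ case the corresponding constant term was exactly Morris's $M_{n-2}(1,1,1/2)$. Here the extra $(1-x_i-x_j)^{-1}$ factors, coming from the positive (type $D$) edges of $K_n^D$ under the dynamic Kostant generating series of Proposition~\ref{prop:Kdyngs}, push us outside Morris's identity. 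My plan is to prove and then specialize a hybrid constant term identity of the form
\[
CT \prod_i x_i^{-a}(1-x_i)^{-b}\prod_{i<j}(x_j-x_i)^{-2c}(1-x_i-x_j)^{-2d} \;=\; F(m;a,b,c,d),
\]
with $F$ a product of gamma functions, in the spirit of the Macdonald--Mehta--Morris constant term conjectures for root systems of type $BC$. Specialization to $(a,b,c,d)=(1,2,1/2,1/2)$ should then yield $2^{(n-2)^2}\prod_k Cat(k)$; I would attempt the identity itself either by adapting the Wilf--Zeilberger / partial-fractions derivation of Morris, or by reducing to a known $BC_n$ Selberg-type integral such as those of Gustafson or Habsieger.

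If a direct closed form proves elusive, my backup is iterated residues along the lines of Baldoni--Vergne \cite{BV1,BVMorr}. Taking constant terms in the order $x_1,x_2,\ldots,x_{n-2}$, each $CT_{x_k}$ becomes a residue sum with poles at $x_k=0$, $x_k=1$, $x_k=x_j$ ($j<k$), and $x_k=1-x_j$ ($j<k$). The task is then to index the contributing pole sequences by combinatorial data — some forest-like object analogous to the signed bipartite noncrossing trees of Section~\ref{nct} — and to show that the resulting sum factors in the expected product form. A side benefit of this route is that it should simultaneously yield the conjectural formula for $CRYC_n$, since including type $C$ loops at vertex $i$ simply introduces additional $(1-2x_i)^{-1}$ factors whose residues can be tracked alongside the others.

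A third, entirely combinatorial route goes through Theorem~\ref{mm}: exhibit an explicit bijection between the integer dynamic $(0,0,1,2,\ldots,n-2)$-flows on $K_n^D$ and a set of size $2^{(n-2)^2}\prod_k Cat(k)$. In this direction the factor $2^{(n-2)^2}$ cries out for interpretation as a sum over independent $\pm$-choices on the positive edges of some type $A$ substructure. Since no combinatorial proof is known even for $CRYA_n$, the hope would be to first crack the type $A$ case and then lift the bijection to $CRYD_n$ by layering sign data on top.

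The main obstacle I anticipate is the tight coupling of the denominators $(x_j-x_i)(1-x_i-x_j)$. Every standard simplification — partial fractions, iterated residues, or induction via the Subdivision Lemma applied at vertex $2$ or vertex $n$ — produces cross terms that entangle different variables in ways that resist clean telescoping. Overcoming this will likely require either a new $BC$-type constant term identity in which $(x_j-x_i)$ and $(1-x_i-x_j)$ jointly play the role of a ``squared Vandermonde,'' or a genuinely new combinatorial insight bridging type $A$ parking-function-like structures with type $D$ sign data. Even a purely residue-theoretic proof would be notable, given that Conjecture~\ref{conjcryDC} remains open.
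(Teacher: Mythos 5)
The statement you are addressing is a conjecture in the paper, not a theorem: the authors prove only the reduction to the constant term expression \eqref{eqCRYDMorrislike} (via Theorem~\ref{mm} and Proposition~\ref{prop:Kdyngs}), verify the product formula numerically up to $n=7$, and explicitly state in the remark following \eqref{eqCRYDMorrislike} that they were unable to find a closed form for the hybrid Morris-type constant term with the extra $(1-x_i-x_j)^{-2d}$ factors. Your proposal does not close this gap. Its only rigorous content is precisely the reduction the paper already carries out; everything after that is conditional. Route one hinges on an unproven constant term identity $F(m;a,b,c,d)$ whose existence as a gamma-product is exactly what the authors could not establish (and for which no analogue of Lemma~\ref{morrisID} is supplied), and even granting such an identity you would still need to verify that the specialization $(a,b,c,d)=(1,2,1/2,1/2)$ evaluates to $2^{(n-2)^2}\prod_{k=0}^{n-2}Cat(k)$, which is not done. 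Route two merely restates what a residue proof would have to accomplish: identifying which iterated-pole sequences contribute and showing the sum telescopes into a product is the entire difficulty, and no cancellation mechanism or indexing set is exhibited. Route three presupposes a combinatorial proof of the type $A$ case, which, as you yourself note, is open.

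So the concrete missing idea is the same one the paper identifies: a method for evaluating (or otherwise controlling) the coupled denominators $(x_j-x_i)(1-x_i-x_j)$, whether as a $BC$-type constant term identity, a factorizing residue computation, or a bijection realizing the factor $2^{(n-2)^2}$. Your diagnosis of the obstacle is accurate and your framing of the three possible attacks is reasonable as a research program, but none of the three is carried past its first nontrivial step, so the proposal is a plan rather than a proof; Conjecture~\ref{conjcryDC} remains open after it exactly as it was before.
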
 

\begin{remark}
The right-hand-side of~\eqref{eqCRYDMorrislike} looks like an evaluation of the right-hand-side of the following generalization of the  Morris identity (Lemma~\ref{morrisID}):
\[
CT_{x_m}\cdots CT_{x_1} \prod_{i=1}^m x_i^{-a} (1-x_i)^{-b}\prod_{1\leq i<j\leq m}(x_j-x_i)^{-2c}(1-x_i-x_j)^{-2d}.
\] 
for positive integers $m, a,$ and $b$ and positive half integers $c$ and $d$. We were unable to find a formula in terms of $m, a,b,c$ for such a generalization.
\end{remark}

Finally, we very briefly consider the flow polytopes: (i) $\mathcal{F}_{K_{n}^C}(2,0,\ldots,0)$ where $K_{n}^C$ is the complete signed graph with loops $(i,i,+)$ corresponding to the type $C$ positive roots $2\ee_i$, (ii) $\mathcal{F}_{K_{n}^B}(2,0,\ldots,0)$ where $K_{n}^B$ is the complete signed graph with loops $(i,i,+)$ corresponding to the type $B$ positive root $\ee_i$, (iii) $\F_{K_n^C}(1,1,0,\ldots,0)$, and (iv) $\F_{K_n^B}(1,1,0,\ldots,0)$. These polytopes also appear to have interesting volumes:



\begin{conjecture} Let $K_n^D, K_n^B, K_n^C$ be the signed complete graphs whose edges correspond to the positive roots in type $D_n$, $B_n$ and $C_n$ as defined above then 
\begin{equation}
\vol \mathcal{F}_{K_{n}^C}(2,0,\ldots,0) = 2^{n-2}\cdot \vol(CRYD_n)
\end{equation}
and except for $n=2$ (where $\vol\mathcal{F}_{K_{n}^{D}}(2,0)=\vol\mathcal{F}_{K_{n}^{D}}(1,1)$),
\begin{equation}
\vol \mathcal{F}_{K_{n}^{\{B,C,D\}}}(2,0,\ldots,0) = 
2\cdot \vol \mathcal{F}_{K_n^{\{B,C,D\}}}(1,1,0,\ldots,0).
\end{equation}
\end{conjecture}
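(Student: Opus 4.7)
The plan is to attack both identities via Theorem~\ref{mm} and the generating series factorisation from Proposition~\ref{prop:Kdyngs}. For the first identity, Theorem~\ref{mm} gives
\begin{equation*}
\vol(\F_{K_n^C}(2\ee_1)) = K^{\text{dyn}}_{K_n^C}(0,0,1,2,\ldots,n-2), \quad \vol(CRYD_n) = K^{\text{dyn}}_{K_n^D}(0,0,1,2,\ldots,n-2),
\end{equation*}
and since $K_n^C$ is obtained from $K_n^D$ by adding the $n$ loops $(i,i,+)$, Proposition~\ref{prop:Kdyngs} gives
\begin{equation*}
\sum_{\bf a} K^{\text{dyn}}_{K_n^C}({\bf a})\,{\bf x}^{\bf a} = \Bigl( \sum_{\bf a} K^{\text{dyn}}_{K_n^D}({\bf a})\,{\bf x}^{\bf a} \Bigr)\prod_{i=1}^n (1-2x_i)^{-1}.
\end{equation*}
A short balance argument at vertices $1$ and $2$ shows that the loops $(1,1,+)$ and $(2,2,+)$ are forced to carry zero flow in any dynamic flow with netflow $(0,0,1,2,\ldots,n-2)$, so the substitution $x_1=x_2=0$ in the coefficient extraction leaves only $n-2$ substantive loop factors.

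Following the derivation of~\eqref{eqCRYDMorrislike} with the relabelling $y_k=x_{k+2}$, one obtains
\begin{equation*}
V_C = CT_{y_{n-2}}\cdots CT_{y_1}\,\prod_{i=1}^{n-2} y_i^{-1}(1-y_i)^{-2}(1-2y_i)^{-1}\prod_{1\leq i<j\leq n-2}(y_j-y_i)^{-1}(1-y_i-y_j)^{-1}.
\end{equation*}
Comparing with~\eqref{eqCRYDMorrislike}, the first identity reduces to the constant-term identity that the extra factor $\prod_i (1-2y_i)^{-1}$ contributes exactly $2^{n-2}$. This is the main technical obstacle. The natural attack is a generalisation of Morris's identity (Lemma~\ref{morrisID}) incorporating the additional $(1-2y_i)^{-1}$ factor, perhaps via residue calculus in the spirit of Baldoni--Vergne \cite{BV1,BVMorr}. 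An alternative is an inductive argument that extends the Subdivision Lemma to graphs with loops via reduction rule (R6), with each loop at a vertex $i\in\{3,\ldots,n\}$ contributing a factor of $2$ to the enumeration of the noncrossing bipartite trees indexing the subdivision.

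For the second identity, Theorem~\ref{mm} does not apply directly because the netflow $(1,1,0,\ldots,0)$ is not of the form $2\ee_1$. The plan is first to establish an analogue of Theorem~\ref{mm} for this netflow, by applying the Subdivision Lemma iteratively at vertices $3,\ldots,n$ (all of which have netflow $0$), handling loops at these vertices via (R6) in types $B$ and $C$. This should express $\vol(\F_{K_n^X}(1,1,0,\ldots,0))$ as a modified dynamic Kostant partition function value. The factor of $2$ is then conjecturally explained by a combinatorial comparison of the two subdivisions: in the $(2,0,\ldots,0)$ case, two units of input flow exit vertex $1$ and can be ``ordered'' in two ways as they enter outgoing edges under the tree encoding, whereas in the $(1,1,0,\ldots,0)$ case the two units are already pre-distinguished at vertices $1$ and $2$. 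The main obstacle is to make this $2$-to-$1$ bijection rigorous and to explain the failure at $n=2$, where $K_2^D$ consists only of the two edges $(1,2,\pm)$, so that both flow polytopes degenerate to single points (of volume $1$) with no room for the ordering argument.
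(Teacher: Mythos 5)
This statement is one of the paper's concluding conjectures: the authors give no proof of it (it rests on numerical evidence from counting dynamic flows, the constant-term expression \eqref{eqCRYDMorrislike}, and direct volume computations), so there is no argument in the paper to compare yours against; the only question is whether your proposal actually settles the conjecture, and it does not. For the first identity you reduce the claim, via Theorem~\ref{mm} and Proposition~\ref{prop:Kdyngs}, to the assertion that inserting $\prod_{i=1}^{n-2}(1-2y_i)^{-1}$ into the constant term \eqref{eqCRYDMorrislike} multiplies it by exactly $2^{n-2}$; this is an equivalent restatement of the conjecture in constant-term form, not a proof, and the paper itself remarks (after Conjecture~\ref{conjcryDC}) that it was unable to evaluate even the closely related Morris-type generalization with the extra factors $(1-x_i-x_j)^{-2d}$. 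You label this step ``the main technical obstacle'' and leave it open. The second identity is in the same state: the proposed $2$-to-$1$ correspondence between the subdivisions for netflows $(2,0,\ldots,0)$ and $(1,1,0,\ldots,0)$ is described as ``conjecturally explained'' and is never constructed, nor is the promised analogue of Theorem~\ref{mm} for the netflow $(1,1,0,\ldots,0)$ actually established.

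Beyond the two admitted gaps, several steps you treat as routine are not licensed by the results as stated. Theorem~\ref{mm} is proved for \emph{loopless} signed graphs, and the Subdivision Lemma~\ref{lem:nct} requires that no loop be incident to the vertex being reduced; $K_n^C$ and $K_n^B$ have loops at every vertex, so invoking Theorem~\ref{mm} and the subdivision machinery for them requires the extension via rule (R6) that you mention but do not carry out. Similarly, Proposition~\ref{prop:Kdyngs} is derived for positive edges $(i,j,+)$ with $i<j$; the claim that a loop $(i,i,+)$ contributes a factor $(1-2x_i)^{-1}$ needs its own argument, since for a loop the extra right half-edges created by the left half-edge are attached to the same vertex and feed back into the same balance equation, and in type $B$ the loop corresponds to the root $\ee_i$ rather than $2\ee_i$, a case your plan does not address at all even though the second identity includes $K_n^B$. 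Finally, before investing in the constant-term route you should pin down the target numerically: the factor $2^{n-2}$ in this conjecture appears to disagree with the relation $\vol(CRYC_n)=2^{n-1}\vol(CRYD_n)$ stated in the abstract and in Examples~\ref{mainexs}~(v), so the precise power of $2$ your reduction must produce is itself in doubt.
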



\bibliography{biblio-flowpoly}{}
\bibliographystyle{plain}

\end{document}